\newtheorem{thm}{Theorem}
\newtheorem{theorem}[thm]{Theorem}
\newtheorem{lem}[thm]{Lemma}
\newtheorem{prop}[thm]{Proposition}
\newtheorem{cor}[thm]{Corollary}
\theoremstyle{remark}
\newtheorem{rem}[thm]{Remark}
\newtheorem{remark}[thm]{Remark}
\numberwithin{thm}{section}
\numberwithin{equation}{section}
\newcommand{\af}{\mathrm{af}}
\newcommand{\al}{\alpha}
\newcommand{\flags}{\hat{\mathrm{Fl}}}
\newcommand{\agp}{\tilde{\mathfrak{G}}}
\newcommand{\asp}{\tilde{\mathfrak{S}}}
\newcommand{\bA}{{\hat{\mathbb{A}}}}
\newcommand{\bKfin}{\mathbb{K}}
\newcommand{\bC}{\mathbb{C}}
\newcommand{\bK}{{\hat{\mathbb{K}}}}
\newcommand{\bZ}{\mathbb{Z}}
\newcommand{\C}{\mathbb{C}}
\newcommand{\cl}{\mathrm{cl}}
\newcommand{\cL}{\mathcal{L}}
\newcommand{\cO}{\mathcal{O}}
\newcommand{\diag}{\mathrm{diag}}
\newcommand{\End}{\mathrm{End}}
\newcommand{\ev}{\mathrm{ev}}
\newcommand{\F}{{\rm Fun}}
\newcommand{\Frac}{{\rm Frac}}
\newcommand{\gr}{\thickaffgr}
\newcommand{\thickaffgr}{\mathrm{Gr}}
\newcommand{\Hom}{\mathrm{Hom}}
\newcommand{\Iaf}{\hat{I}}
\newcommand{\id}{\mathrm{id}}
\newcommand{\Ifin}{I}
\newcommand{\ip}[2]{\langle #1, #2 \rangle}
\newcommand{\LL}{{\mathbb L}}
\newcommand{\La}{\Lambda}
\newcommand{\la}{\lambda}
\newcommand{\level}{\mathrm{level}}
\newcommand{\LG}{{\mathrm {LG}}}
\newcommand{\loc}{\mathrm{loc}}
\newcommand{\OG}{{\mathrm{OG}}}
\newcommand{\PP}{{\mathbb P}}
\newcommand{\pair}[2]{\langle #1\,,\,#2\rangle}
\newcommand{\pt}{\mathrm{pt}}
\newcommand{\Q}{{\mathbb Q}}
\newcommand{\R}{{\mathbb R}}
\newcommand{\bS}{\overleftarrow{\mathfrak{S}}}
\newcommand{\schub}{\mathfrak{S}}
\newcommand{\SL}{{\mathrm{SL}}}
\newcommand{\SO}{{\mathrm{SO}}}
\newcommand{\Sp}{{\mathrm{Sp}}}
\newcommand{\Sym}{\mathrm{Sym}}
\newcommand{\Taf}{\hat{T}}
\newcommand{\tonilHecke}{\tau}
\newcommand{\tocentralizer}{k}
\newcommand{\TR}{{T_{\mathbb{R}}}}
\newcommand{\tS}{{\tilde S}}
\newcommand{\ty}{\tilde y}
\newcommand{\Wa}{\hat{W}}
\newcommand{\Wf}{W}
\newcommand{\Wz}{\Wa^0}
\newcommand{\Xfin}{P}
\newcommand{\Xaf}{\hat{P}}
\newcommand{\Z}{{\mathbb Z}}
\def\dnode#1#2{\overset{#1}{\underset{#2}{\circ}}}
\def\ver#1#2{\overset{{\llap{$\scriptstyle#1$}\displaystyle\circ{\rlap{$\scriptstyle#2$}}}}{\scriptstyle\vert}}
\begin{document}
\title{On the coproduct in affine Schubert calculus}
\author{Thomas Lam}
\address{Department of Mathematics \\
University of Michigan \\
530 Church St. \\
Ann Arbor 48109 USA}
\email{tfylam@umich.edu}
\thanks{T.L. was supported by NSF DMS-1464693 and DMS-1953852, and by a von Neumann Fellowship from the Institute for Advanced Study.}
\author{Seung Jin Lee}
\address{Department of Mathematical Sciences \\ Seoul National University \\ GwanAkRo 1 \\ Gwanak-Gu Seoul 08826 Korea}
\email{lsjin@snu.ac.kr}
\thanks{S. J. Lee was supported by the National Research Foundation of Korea(NRF) grant funded by the Korea government(MEST) (No. 2019R1C1C1003473).}
\author{Mark Shimozono}
\address{Department of Mathematics \\
460 McBryde Hall, Virginia Tech\\
 255 Stanger St. \\
Blacksburg, VA, 24601, USA }
\email{mshimo@math.vt.edu}
\thanks{M.S. was supported by NSF DMS-1600653.}

\dedicatory{Dedicated to Bill Fulton on the occasion of his 80th birthday.
\\ Thank you, Bill, for your inspirational and visionary work!
}

\begin{abstract}
The cohomology of the affine flag variety $\flags_G$ of a complex reductive group $G$ is a comodule over the cohomology of the affine Grassmannian $\gr_G$.
We give positive formulae for the coproduct of an affine Schubert class in terms of affine Stanley classes and finite Schubert classes, in (torus-equivariant) cohomology and $K$-theory.  As an application, we deduce monomial positivity for the affine Schubert polynomials of the second author.
\end{abstract}

\maketitle

\section{Introduction}
Let $G$ be a complex reductive group with maximal torus $T$ and flag variety $G/B$, and denote by $\xi^v_{G/B}$ the Schubert classes of $H^*_T(G/B)$ (all cohomology rings are taken with integer coefficients), indexed by the finite Weyl group $\Wf$.  Let $\flags_G$ denote the affine flag variety of $G$ and $\gr_G$ denote the affine Grassmannian of $G$.  There is a coaction map
$$
\Delta: H^*_T(\flags_G) \to H^*_T(\gr_G) \otimes_{H^*_T(\pt)} H^*_T(\flags_G).
$$
It is induced via pullback from the product map of topological spaces
$
\Omega K \times LK/T_\R \to LK/T_\R,
$
where $K \subset G$ is a maximal compact subgroup and $T_\R = K \cap T$ is the maximal compact torus.  The cohomology ring $H^*_T(\flags_G)$ has Schubert classes $\xi^w$ indexed by the affine Weyl group $\Wa$.  The inclusion $\varphi: \Omega K \hookrightarrow LK/T_\R$ induces a ``wrongway" pullback map $$\varphi^*: H^*_T(\flags_G) \to H^*_T(\gr_G).$$  By definition, the equivariant affine Stanley class $F^w \in H^*_T(\gr_G)$ is given by $F^w:= \varphi^*(\xi^w)$.  We refer the reader to \cite{book} for further background.

\begin{theorem}\label{T:main}
Let $w \in \Wa$.  Then we have
$$
\Delta(\xi^w) = \sum_{w \doteq uv }F^u \otimes \xi^v
$$
and under the isomorphism $H^*_T(\flags_G) \cong H^*_T(\gr_G) \otimes_{H^*_T(\pt)} H^*_T(G/B)$, $$
\xi^w = \sum_{w \doteq uv } F^u \otimes  \xi^v_{G/B}
$$
where $u \in \Wa$ and $v \in \Wf$ and we write $w \doteq uv$ if $w = uv$ and $ \ell(w) = \ell(u) + \ell(v)$.
\end{theorem}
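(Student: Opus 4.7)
The overall strategy is to prove the coproduct formula first and then deduce the second identity from it. The second identity follows by applying an explicit realization of the isomorphism $H^*_T(\flags_G) \cong H^*_T(\gr_G) \otimes_{H^*_T(\pt)} H^*_T(G/B)$, which arises from the topological splitting $LK \cong \Omega K \cdot K$.

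To derive the second identity, I would use the constant-loop inclusion $i: G/B \hookrightarrow \flags_G$. The composition
\[
H^*_T(\flags_G) \xrightarrow{\Delta} H^*_T(\gr_G) \otimes H^*_T(\flags_G) \xrightarrow{\id \otimes i^*} H^*_T(\gr_G) \otimes H^*_T(G/B)
\]
is precisely the desired isomorphism, since it is induced by pullback from the homeomorphism $\Omega K \times K/\TR \to \flags_G$, $(\gamma, [k]) \mapsto [\gamma k]$, coming from the splitting $LK \cong \Omega K \cdot K$. Combined with the standard compatibility $i^*(\xi^v) = \xi^v_{G/B}$ for $v \in \Wf$, applying $\id \otimes i^*$ to the coproduct formula then yields the second identity.

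For the coproduct formula itself, I would proceed by induction on $\ell(w)$ using a Leibniz-type recursion. Both sides of $\Delta(\xi^w) = \sum_{w \doteq uv} F^u \otimes \xi^v$ live in $H^*_T(\gr_G) \otimes H^*_T(\flags_G)$, and they can be shown to satisfy a common recursion under the action of affine Demazure operators on the second tensor factor; since $\Delta$ is induced by a geometric map, it commutes with these operators automatically. The base case $w = e$ reads $\Delta(1) = 1 \otimes 1$, matching the unique factorization $e = e \cdot e$. Alternatively, one can verify the identity via torus-equivariant localization: both sides are determined by their restrictions to $T$-fixed points of $\gr_G \times \flags_G$, and the identity at each fixed point reduces to a combinatorial statement about reduced words in $\Wa$ and their splittings into a prefix (giving an element of $\Wa$) and a suffix (giving an element of $\Wf$).

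The main technical obstacle is verifying the Leibniz/combinatorial compatibility. In the inductive approach, the crucial step is to show that the right-hand side transforms correctly under the Demazure operators: one must analyze how the set of length-additive factorizations $\{(u, v) : w \doteq uv,\ u \in \Wa,\ v \in \Wf\}$ changes under right multiplication of $w$ by a simple reflection, and match this with the Demazure action on the classes $\xi^v$. A secondary point requiring care is the compatibility $i^*(\xi^v) = \xi^v_{G/B}$ for $v \in \Wf$, which follows from the standard fact that the inclusion $G/B \hookrightarrow \flags_G$ sends $X_v^{G/B}$ to a transverse intersection with $X_v \subset \flags_G$ of the correct dimension.
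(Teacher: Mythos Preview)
Your derivation of the second identity from the first is correct and matches the paper's strategy: the paper realizes the isomorphism $H^*_T(\flags_G)\cong H^*_T(\gr_G)\otimes_{H^*_T(\pt)} H^*_T(G/B)$ via $a\otimes b\mapsto p^*(a)\cup \ev_1^*(b)$ and proves the identity $\cup\circ(1\otimes\eta)\circ\Delta=\id$ (Proposition~\ref{P:coprodbullet}); your map $(\id\otimes i^*)\circ\Delta$ is the inverse, and $i^*(\xi^v)=\xi^v_{G/B}$ for $v\in\Wf$ (with $i^*(\xi^v)=0$ for $v\notin\Wf$) is exactly \eqref{E:qschub}. So that half is fine.

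For the coproduct formula itself, your inductive sketch has a genuine gap. It is true that $\Delta$ intertwines $A_i\bullet$ on $H^*_T(\flags_G)$ with $1\otimes(A_i\bullet)$ on the target---but this is an algebraic consequence of the pairing (Proposition~\ref{P:coprod}), not of $\Delta$ being ``geometric''; the $\bullet$-action is not a pullback. Granting this, one checks that $D_w:=\sum_{w\doteq uv}F^u\otimes\xi^v$ satisfies the same recursion $(1\otimes A_i\bullet)D_w=D_{ws_i}$ or $0$. However, this recursion does \emph{not} determine $D_w$ from the $D_{ws_i}$: the joint kernel of all $1\otimes(A_i\bullet)$ is $H^*_T(\gr_G)\otimes 1$, which is large. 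You need an extra normalization at each step, e.g.\ that $(1\otimes\epsilon)\Delta(\xi^w)=F^w$ where $\epsilon$ is evaluation at the identity (equivalently, that the coefficient of $1$ in the second factor is $F^w$). This follows from the definition $F^w=\varphi^*(\xi^w)$ together with the fact that $\varphi$ is the restriction of the action map to $\Omega K\times\{e\}$, but you do not mention it, and without it the induction does not close. Your localization alternative is likewise incomplete: the required identity $\xi^w(t_\mu y)=\sum_{w\doteq uv}\xi^u(t_\mu)\,\xi^v(y)$ is not a direct consequence of the Billey formula since $t_\mu\cdot y$ is generally not length-additive.

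The paper takes a different and more direct route to the coproduct formula. It identifies $H_*^T(\gr_G)$ with the Peterson subalgebra $\PP=Z_{\bA}(S)$ (Theorem~\ref{T:Hcentralizer}), so that $\Delta$ is dual to the multiplication map $\PP\otimes_S\bA\to\bA$. Writing $j_u=\sum_z j_u^z A_z$ and using $A_xA_v=A_{xv}$ when $\ell(xv)=\ell(x)+\ell(v)$ and $0$ otherwise, one reads off the matrix of this multiplication in the bases $\{j_u\otimes A_v\}$ and $\{A_w\}$; transposing gives $\Delta(\xi^w)=\sum_{w\doteq uv}F^u\otimes\xi^v$ immediately (Proposition~\ref{P:Hcoprod1}), using $F^u=\sum_x j^u_x\,\xi^x_{\gr}$ (Lemma~\ref{L:HFcoeffs}). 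This avoids any induction and makes the role of the nilCoxeter relations transparent; the cost is invoking the Peterson isomorphism.
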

The class $\xi^v_{G/B}$ is considered an element of $H^*_T(\flags_G)$ via pullback under evaluation at the identity (see \eqref{eq:ev1}).  The same formulae hold in non-equivariant cohomology.  

 In the majority of this article (Sections~\ref{sec:nilHecke}--\ref{sec:Schubert}) we will work in torus-equivariant $K$-theory $K^*_T(\flags)$ of the affine flag variety.  The coproduct formula for holds in (torus-equivariant) $K$-theory with Demazure product replacing length-additive products (see Theorem \ref{T:affineschub}).  Our proof relies heavily on the action of the affine nilHecke ring on $K^*_T(\flags)$.  Let us note that there are a number of different geometric approaches \cite{KK,KS,LSS} for constructing Schubert classes in $K^*_T(\flags)$; see \cite[Section 3]{LLMS} for a comparison.  However, our results holds at the level of Grothendieck groups and the precise geometric model (thick affine flag variety, thin affine flag variety, or based loop group) is not crucial.
 
In Section~\ref{sec:cohom}, the proofs for the cohomology case are indicated.  

There is a long tradition of combinatorial formulae for Schubert classes in cohomology and $K$-theory using reduced factorizations or Hecke factorizations, dating at least back to \cite{LS:doubleSchub}, see also the references in Section~\ref{sec:examples}.  In particular, \cite{BJS} gives a formula for Schubert polynomials using reduced factorizations and \cite{Lam:affstan} gives a formula for affine Stanley symmetric functions using cyclically decreasing reduced factorizations.  In Section~\ref{sec:examples} we combine these formulae with our Theorem~\ref{T:main} to prove (Theorem \ref{T:monomialpositive}) that the affine Schubert polynomials \cite{Lee} are monomial positive.  We explain how the Billey--Haiman formula \cite{BH} for type $C$ or $D$ Schubert polynomials (see also \cite{IMN}) is a consequence of our coproduct formula.

By taking an appropriate limit (see Section~\ref{sec:examples}), 
the coproduct formula for backstable (double) Schubert polynomials \cite{LLS} can be deduced from Theorem \ref{T:main}.  Whereas the proofs in \cite{LLS} are essentially combinatorial, the present work relies heavily on equivariant localization and the nilHecke algebra.

\medskip
\noindent
{\bf Acknowledgements.} The authors thank an anonymous referee for comments and corrections.

\section{Affine nilHecke ring and the equivariant $K$-theory of the affine flag variety}\label{sec:nilHecke}
The proofs of our results for a complex reductive group easily reduces to that of a semisimple simply-connected group.  To stay close to our main references \cite{KK,LSS}, we work with the latter.  Henceforth, we fix a complex semisimple simply-connected group $G$.  

The results of this section are due to Kostant and Kumar \cite{KK}.  Our notation 
follows that of \cite{LSS}.


\subsection{Small-torus affine $K$-nilHecke ring}
\label{S:nilHecke}
Let $T \subset G$ be the maximal torus with character group, or weight lattice $\Xfin$.  We have $\Xfin = \bigoplus_{i\in \Ifin} \bZ \omega_i$ where $\omega_i$ denotes a fundamental weight and $\Ifin$ denotes the finite Dynkin diagram of $G$.  Let 
$\Xaf = \bZ\delta \oplus \bigoplus_{i\in\Iaf} \bZ\La_i$ be the affine weight lattice with fundamental weights $\La_i$ for $i$ in the affine Dynkin node set $\Iaf=\Ifin\cup\{0\}$, and let $\delta$ denote the null root. Let $\Xaf^* =\Hom_{\bZ}(\Xaf,\bZ)$ be the affine coweight lattice. It has basis dual to $\{\delta\}\cup \{\La_i\mid i\in \Iaf\}$ given by $\{d\} \cup \{\alpha_i^\vee\mid i\in \Iaf\}$ where $d$ is the degree generator and the $\alpha_i^\vee$ are the simple coroots. The Cartan matrix $ (a_{ij}\mid i,j\in \Iaf)$ is defined by $a_{ij} = \pair{\alpha_i^\vee}{\alpha_j}$ using the evaluation pairing $\Xaf^*\times\Xaf \to \bZ$.
Let $c\in \Xaf^*$ be the canonical central element \cite[\S6.2]{Kac}. The level of $\La\in \Xaf$ is defined by $\level(\La)=\pair{c}{\La}$. The natural projection $\cl:\Xaf\to\Xfin$ has kernel $\bZ\delta\oplus \bZ \La_0$ and satisfies $\cl(\La_i)=\omega_i$ for $i\in\Ifin$. In particular $\cl(\alpha_0)=-\theta$ where $\theta$ is the highest root. This induces a map $\cl:\Z[\hat{P}]\to\Z[P]$ between the representation ring of the maximal torus of the affine Kac-Moody group and that of the torus $T$.
Let $\af:\Xfin\to\Xaf$ be the section of $\cl$ given by $\af(\omega_i) = \La_i - \level(\La_i)\La_0$ for $i\in\Ifin$.

The finite Weyl group $\Wf$ acts naturally on $\Xfin$ and on $R(T)$, where $R(T)\cong\Z[\Xfin]=\bigoplus_{\la\in \Xfin} \Z e^\la$ is the Grothendieck group of the category of finite-dimensional
$T$-modules, and for $\la\in P$, $e^\la$ is the class of the
one-dimensional $T$-module with character $\la$. Let $Q(T) = \Frac(R(T))$.
The affine Weyl group $\Wa$ also acts on $\Xfin$, $R(T)$, and $Q(T)$ via the level-zero action, that is, via the homomorphism $\cl_{\Wa}:\Wa\cong Q^\vee \rtimes \Wf \to \Wf$ given by $t_\mu v\mapsto v$ for $\mu$ in the coroot lattice $Q^\vee$ and $v\in \Wf$. In particular, $s_0=t_{\theta^\vee}s_\theta$ satisfies $\cl(s_0)=s_\theta$ where 
$\theta^\vee$ is 
the coroot associated to $\theta$.

We let $u*v \in \Wa$ denote the Demazure (or $0$-Hecke) product of $u,v \in \Wa$.  It is the associative product determined by 
\begin{align*}
s_i * v &= \begin{cases} s_i v & \mbox{if $s_i v > v$,} \\
v & \mbox{otherwise.}
\end{cases} &
 v*s_i &= \begin{cases} vs_i & \mbox{if $vs_i > v$,} \\
v & \mbox{otherwise.}
\end{cases} 
\end{align*}

Let $\bK_{Q(T)}$ be the smash product of the group algebra $\Q[\Wa]$
and $Q(T)$, defined by $\bK_{Q(T)} = Q(T) \otimes_\Q \Q[\Wa]$
with multiplication $$(q\otimes w)(p\otimes v) = q (w\cdot p) \otimes wv$$
for $p,q\in Q(T)$ and $v,w\in W$. We write $qw$ instead of $q\otimes w$.
Define the elements $T_i\in \bK_{Q(T)}$ by
\begin{equation} \label{E:Tdef}
  T_i 
  = (1-e^{\al_i})^{-1} (s_i-1).
\end{equation}
In particular $T_0 = (1-e^{-\theta})^{-1}(t_{\theta^\vee} s_\theta - 1)$.
The $T_i$ satisfy
\begin{equation} \label{E:Tbraid}
T_i^2 = -T_i \ \ \ \ \text{and}\ \ \ \
\underbrace{T_iT_j\dotsm}_{\text{$m_{ij}$ factors}} =
\underbrace{T_jT_i\dotsm}_{\text{$m_{ij}$ factors}}
\end{equation}
where $m_{ij}$ is related to the Cartan matrix entries $a_{ij}$ by
\begin{align*}
\begin{array}{|c||c|c|c|c|c|}\hline \hline
a_{ij}a_{ji} & 0 & 1 & 2 & 3 & \ge 4 \\  \hline
m_{ij} & 2 & 3&4&6&\infty\\ \hline
\end{array}
\end{align*}
We have the commutation relation in $\bK_{Q(T)}$
\begin{equation}\label{E:Tcomm}
T_i \,q = (T_i\cdot q) + (s_i \cdot q) T_i\qquad\text{for $q\in Q(T)$.}
\end{equation}
Let $T_w = T_{i_1} T_{i_2}\dotsm T_{i_N}\in \bK_{Q(T)}$
where $w=s_{i_1}s_{i_2}\dotsm s_{i_N}$ is a reduced decomposition; it is well-defined  
by \eqref{E:Tbraid}. It is easily verified that
\begin{equation*}
  T_i T_w = \begin{cases}
  T_{s_i w} &\text{if $s_iw>w$} \\
  -T_w &\text{if $s_iw<w$}
  \end{cases}\qquad \text{and} \qquad
  T_w T_i = \begin{cases}
  T_{ws_i} &\text{if $ws_i>w$} \\
  -T_w &\text{if $ws_i<w$}
  \end{cases}
\end{equation*}
where $<$ denotes the Bruhat order on $\Wa$.
The algebra $\bK_{Q(T)}$ acts naturally on $Q(T)$.  In particular, one has
\begin{equation}\label{E:deriv}
  T_i \cdot (qq') = (T_i\cdot q) q' + (s_i\cdot q) T_i \cdot q'\qquad\text{for $q,q\in Q(T)$.}
\end{equation}
The $0$-Hecke ring $\bK_0$ is the subring of $\bK_{Q(T)}$ generated by
the $T_i$ over $\Z$. It can also be defined by generators $\{T_i\mid i\in \Iaf\}$
and relations \eqref{E:Tbraid}. We have $\bK_0 = \bigoplus_{w\in \Wa}
\bZ T_w$.

\begin{lem} \label{L:0Heckeacts} The ring $\bK_0$ acts on $R(T)$.
\end{lem}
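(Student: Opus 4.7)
The plan is to observe that $\bK_0$ is generated over $\bZ$ by the elements $T_i$ ($i \in \Iaf$), and to show that each $T_i$ preserves $R(T) \subset Q(T)$ under the action of $\bK_{Q(T)}$ on $Q(T)$ that was just described. Since $\bK_0 = \bigoplus_{w\in\Wa} \bZ T_w$ is spanned by products of the $T_i$, this suffices: composition of operators preserving $R(T)$ preserves $R(T)$, and $\bZ$-linear combinations do too.

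To check that $T_i \cdot R(T) \subset R(T)$, it is enough, by $\bZ$-linearity, to evaluate $T_i$ on the basis element $e^\la$ for arbitrary $\la\in \Xfin$. Let $\beta := \cl(\al_i) \in \Xfin$, so that $\beta = \al_i$ for $i \in \Ifin$ and $\beta = -\theta$ for $i = 0$ (reflecting the level-zero identification under which $s_0$ acts on $R(T)$ as $s_\theta$ and $e^{\al_0}$ is read as $e^{-\theta}$). Setting $n = \pair{\la}{\beta^\vee}$, the level-zero action gives $s_i \cdot e^\la = e^{\la - n\beta}$, whence
\[
s_i \cdot e^\la - e^\la \;=\; e^\la \bigl(e^{-n\beta} - 1\bigr).
\]

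The key computation is that $(1-e^\beta)$ divides $e^{-n\beta}-1$ in $R(T)$: for $n \ge 0$ one has the factorization
\[
e^{-n\beta} - 1 \;=\; -e^{-n\beta}(1 - e^\beta)\bigl(1 + e^\beta + e^{2\beta} + \cdots + e^{(n-1)\beta}\bigr),
\]
and an analogous identity holds for $n < 0$ (and the case $n = 0$ gives $0$). Substituting into \eqref{E:Tdef} cancels the denominator $(1-e^{\al_i}) = (1-e^\beta)$, leaving a finite $\bZ$-linear combination of elements $e^\mu$. Thus $T_i \cdot e^\la \in R(T)$, as required.

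The only subtlety is the bookkeeping at the affine node $i=0$: one must remember that, because $\Wa$ acts on $R(T)$ through the level-zero quotient $\cl_{\Wa}$ and because $Q(T)$ contains only $\Xfin$-weights, the symbol $e^{\al_0}$ appearing in \eqref{E:Tdef} has to be interpreted as $e^{-\theta}$. Once this identification is made, the $i=0$ case of the divisibility above is just the standard divided-difference identity for the reflection $s_\theta$ and the root $-\theta$, and the proof concludes uniformly over $i \in \Iaf$.
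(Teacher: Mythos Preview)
Your argument is correct and follows the same route as the paper: reduce to showing each $T_i$ preserves $R(T)$, then verify this by explicitly computing $T_i\cdot e^\la$ and observing that the denominator $1-e^{\al_i}$ cancels. The paper records the resulting closed formula \eqref{eq:TonE}, while you phrase it as a divisibility statement; these are the same computation. One small slip: your displayed factorization has an extraneous minus sign, since in fact
\[
e^{-n\beta}-1 \;=\; e^{-n\beta}\,(1-e^{\beta})\bigl(1+e^{\beta}+\cdots+e^{(n-1)\beta}\bigr),
\]
but this does not affect the conclusion.
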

\begin{proof} $\bK_0$ acts on $Q(T)$, and $T_i$ preserves $R(T)$ by \eqref{E:deriv}
and the following formulae for $\la\in P$:
\begin{equation} \label{eq:TonE}
  T_i \cdot e^\la = \begin{cases}
 e^\la(e^{-\al_i}+e^{-2\al_i}\dotsm+e^{-\ip{\al_i^\vee}{\la}\al_i}) & \text{if $\ip{\al_i^\vee}{\la} > 0$} \\
  0 & \text{if $\ip{\al_i^\vee}{\la}=0$}\\
-e^\la(1+e^{\al_i}+\dotsm+e^{(-\ip{\al_i^\vee}{\la}-1)\al_i}) & \text{if $\ip{\al_i^\vee}{\la} < 0$.} 
  \end{cases}
  \qedhere
\end{equation}
\end{proof}

Define the {\it $K$-NilHecke ring} $\bK$ to be the subring of
$\bK_{Q(T)}$ generated by $\bK_0$ and $R(T)$.  We have $\bK_{Q(T)}
\cong Q(T) \otimes_{R(T)} \bK$.  By \eqref{E:Tcomm}, we have
\begin{equation} \label{E:Tbasis}
 \bK = \bigoplus_{w\in \Wa} R(T) T_w.
\end{equation}
%
%
%
%
%

%

\subsection{$\bK$-$\bK$-bimodule structure on equivariant $K$-theory of affine flag variety} 

We have an isomorphism $K^*_T(\pt) \cong R(T)$.  
Let $\F(\Wa,R(T))$ be the $R(T)$-algebra of functions $\Wa\to R(T)$ under pointwise multiplication $(\phi\psi)(w)=\phi(w)\psi(w)$ for $\phi,\psi\in\F(\Wa,R(T))$ and $w\in\Wa$,
and action $(s \psi)(w) = s \psi(w)$ for $s\in R(T)$, $\psi\in \F(\Wa,R(T))$ and $w\in \Wa$. There is an injective $R(T)$-algebra homomorphism $\mathrm{loc}: K_T^*(\flags) \to \F(\Wa,R(T))$ sending a class $\psi$ to the function $w\mapsto \psi(w)$ where $\psi(w)$ denotes the localization of $\psi$ at $w\in\Wa$. The image of the map $\loc$ is characterized by the small torus affine GKM condition of \cite[Section 4.2]{LSS}.

There is a perfect left $Q(T)$-bilinear pairing $\pair{\cdot}{\cdot}: \bK_{Q(T)} \times \F(\Wa,Q(T)) \to Q(T)$ defined by evaluation:
\begin{align}\label{eq:pair}
\langle w, \psi \rangle = \psi(w)
\end{align}
for $w \in \Wa$ and 
$\psi\in\F(\Wa,Q(T))$.
Abusing notation, 
we regard every $\psi\in \F(\Wa,Q(T))$ as an element of
$\Hom_{Q(T)}(\bK_{Q(T)},Q(T))$ by formal left $Q(T)$-linearity: for
$a=\sum_{w\in\Wa} a_w w \in \bK_{Q(T)}$ 
with $a_w\in Q(T)$, let
\begin{align*}
	\psi(a) &= \pair{a}{\psi}= \sum_w a_w \psi(w).
\end{align*}
Thinking of $K^*_T(\flags)$ as 
an $R(T)$-subalgebra of $\F(\Wa,Q(T))$, 
 a function $\psi$ lies in $K^*_T(\flags)$ if and only if $\psi(\bK) \subseteq R(T)$.
The pairing \eqref{eq:pair} restricts to a perfect left $R(T)$-bilinear pairing (see  \cite[(2.10)]{LSS})
\begin{align}\label{eq:integral pairing}
\bK \times K_T^*(\flags) \to R(T).
\end{align}

There is a left action $\psi \mapsto a \cdot \psi$ of $\bK$ on $K^*_T(\flags)$ given by the formulae (see \cite[Chapter 4, Proposition 3.16]{book} for the very similar cohomology case)
\begin{align}
(q \cdot \psi)(b) &= q\,\psi(b) \\
\label{eq:Tonfunc}
(T_i \cdot \psi)(b) &= T_i \cdot \psi(s_i b) + \psi(T_i b) \\
(w \cdot \psi)(b) &= w\,\psi(w^{-1} b)
\end{align}
for $b \in \bK$, $\psi \in K^*_T(\flags)$, $q \in R(T)$, $i \in \Iaf$, and $w \in \Wa$. Here, $T_i$ acts on $R(T)$ as in \eqref{E:deriv} and \eqref{eq:TonE}.

There is another left action $\psi \mapsto a \bullet \psi$ of $\bK$ on $K^*_T(\flags)$ given by \cite[\S 2.4]{LSS}
\begin{align}
(a \bullet \psi)(b)  = \psi(ba)
\end{align}
for $a,b \in \bK$ and $\psi \in K^*_T(\flags)$.

\begin{rem} For those familiar with the double Schubert polynomial
$\mathfrak{S}_w(x;a)$ (or also the double Grothendieck polynomial), the $\cdot$ action is on the equivariant variables $a_i$ and the $\bullet$ action is on the $x_i$ variables.
\end{rem}

Let $p: \flags \to \gr$ be the natural projection and
$p^*:K_T^*(\gr)\to K_T^*(\flags)$ the pullback map, which is an injection.  A class $\psi \in K_T^*(\flags)$ lies in the image of $p^*$ if and only if $\psi(wv) = \psi(w)$ for all $w \in \Wa$ and $v \in \Wf$.  We abuse notation by frequently identifying a class $\psi_\gr \in K_T^*(\gr)$ with its image under $p^*$.


Let $[\cL_\la] \in K^*_T(\flags)$ denote the class of the $T$-equivariant line bundle 
on $\flags$ of weight $\la$.
Using the level zero action of $\Wa$ on $R(T)$ we have \cite[(2.5)]{KS}
\begin{align}\label{E:linebundleloc}
	\pair{t_\mu v}{[\cL_\la]} &= v \cdot e^\la = e^{v\la} &\text {$\mu\in Q^\vee$, $v\in \Wf$.}
\end{align}

\begin{lem} \label{L:Xbullet} For any $\la\in \Xfin$ and $\psi\in K^*_T(\flags)$,
\begin{align}\label{E:Xbullet}
	e^\la \bullet \psi = [\cL_\la] \cup \psi.
\end{align}
\end{lem}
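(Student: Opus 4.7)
The plan is to verify the identity by checking equality pointwise under the GKM-type injection $K^*_T(\flags)\hookrightarrow \F(\Wa,R(T))$, $\psi\mapsto (w\mapsto \psi(w))$. Since both sides of \eqref{E:Xbullet} are elements of $K^*_T(\flags)$, and the cup product $\cup$ becomes pointwise multiplication under this injection, it suffices to show that $(e^\la\bullet\psi)(w)=[\cL_\la](w)\,\psi(w)$ for every $w\in\Wa$.

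First I would compute the right-hand side. Writing $w=t_\mu v$ with $\mu\in Q^\vee$ and $v\in\Wf$, the localization formula \eqref{E:linebundleloc} gives $[\cL_\la](w)=e^{v\la}$, so
\begin{equation*}
\bigl([\cL_\la]\cup \psi\bigr)(w) \;=\; e^{v\la}\,\psi(w).
\end{equation*}
Next I would compute the left-hand side using the definition $(a\bullet\psi)(b)=\psi(ba)$. Taking $a=e^\la$ and $b=w$, we need to simplify $\psi(w\, e^\la)$ inside $\bK_{Q(T)}$. Using the smash-product multiplication $w\cdot e^\la = (w\cdot e^\la)\, w$ where $w$ acts via the level-zero action $\cl_\Wa:\Wa\to\Wf$, we obtain $w\,e^\la = e^{v\la}\,w$. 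Since $\psi$ is $R(T)$-linear in its argument,
\begin{equation*}
(e^\la\bullet\psi)(w)\;=\;\psi(w\, e^\la)\;=\;\psi(e^{v\la}\, w)\;=\;e^{v\la}\,\psi(w).
\end{equation*}
Comparing the two expressions yields the desired equality.

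The only mildly subtle point, and hence the main thing to be careful about, is to match up conventions: the $\Wa$-action on $R(T)$ used to define multiplication in the smash product $\bK_{Q(T)}$ is the level-zero action $t_\mu v\mapsto v$, and this is exactly the same action that appears in \eqref{E:linebundleloc}. Once this is noted, the computation is immediate and no further structural input (Schubert classes, nilHecke relations, etc.) is needed.
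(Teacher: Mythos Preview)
Your proof is correct and follows essentially the same approach as the paper: localize at an arbitrary $w=t_\mu v$, use the definition of the $\bullet$-action to rewrite $(e^\la\bullet\psi)(w)=\psi(w e^\la)$, commute $e^\la$ past $w$ via the level-zero action to get $e^{v\la}\psi(w)$, and compare with the localization formula \eqref{E:linebundleloc} for $[\cL_\la]$. The paper's proof is simply a one-line version of the same computation.
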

\begin{proof} Localizing at $t_\mu v$ for $\mu\in Q^\vee$ and $v\in\Wf$, we compute that $\pair{t_\mu v}{e^\la\bullet \psi}$ is equal to 
\begin{align*}
	\pair{t_\mu v e^\la}{\psi} 
	= \pair{e^{v\la} t_\mu v}{\psi} 
	= e^{v\la} \pair{t_\mu v}{\psi} 
	= \pair{t_\mu v}{[\cL_\la]} \pair{t_\mu v}{\psi} 
	= \pair{t_\mu v}{[\cL_\la] \cup \psi}. &&& \qedhere
\end{align*}
\end{proof}

\section{Endomorphisms of $K^*_T(\flags)$}\label{sec:endo}

\subsection{Wrong-way map and Peterson subalgebra}
Recall that $K \subset G$ is the maximal compact subgroup and $\TR:= K \cap T$ is the maximal compact torus. 
We have $\TR$-equivariant homotopy equivalences between $G/B$ and $K/\TR$, between $\gr$ and the based loop group $\Omega K$, and between $\flags$ and the space $LK/\TR$  \cite{M}.  For an ind-variety $X$ with $T$-action let $K_*^T(X)$ be the $T$-equivariant $K$-homology of $X$, the Grothendieck group of finitely supported $T$-equivariant coherent sheaves on $X$ \cite{Ku} \cite{LSS}. 
There is a left $R(T)$-module isomorphism $\tonilHecke: K_*^T(\flags)\cong \bK$ given by $\tonilHecke(\psi_w) = T_w$, where $\psi_w$ is the ideal sheaf Schubert class for the affine flag ind-variety (see Section \ref{ssec:Schubert}).  We give $K_*^T(\flags)$ the structure of a noncommutative ring so that $\tonilHecke$ is a ring isomorphism.  This ring structure can also be obtained geometrically from convolution; see \cite{Gi} for the corresponding statements for $H_*(G/B)$.  
The $K$-group $K_*^T(\gr)$ has the structure of a commutative Hopf $R(T)$-algebra. The product is induced from the $\TR$-equivariant product map of the topological group $\Omega K$.

%

There is a $\TR$-equivariant map $\varphi: \Omega K \to LK \to LK/\TR$ given by inclusion followed by projection. The map $\varphi$ induces an injective ring and left $R(T)$-module homomorphism $\varphi_*: K^T_*(\gr) \to K^T_*(\flags)$.
It also induces an $R(T)$-algebra homomorphism $\varphi^*:K_T^*(\flags)\to K_T^*(\gr)$ which is called the {\it wrong-way map}, and characterized by (see Lemma \ref{L:wrongwayloc})
\begin{align*}
\varphi^*(\psi)(v) &= \psi(t_\mu) &\mbox{for $v \in \Wz$ and $t_\mu \in v\Wf$.}
\end{align*}


Let $\LL = Z_{\bK}(R(T))$ be the centralizer of $R(T)$ in $\bK$, called the $K$-Peterson subalgebra.  We have the following basic result \cite[Lemma 5.2]{LSS}.

\begin{lem}We have $\LL = \left(\bigoplus_{\mu \in Q^\vee} Q(T) t_\mu\right) \cap \bK$.
\end{lem}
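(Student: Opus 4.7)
The plan is to compute the centralizer of $R(T)$ first in the larger algebra $\bK_{Q(T)}$ and then intersect with $\bK$. Using the $Q(T)$-basis $\{w : w\in\Wa\}$ of $\bK_{Q(T)}$, write an arbitrary element as $a = \sum_{w\in\Wa} a_w w$ with $a_w\in Q(T)$. For any $q\in Q(T)$ the commutation rule in $\bK_{Q(T)}$ gives
\[
a\,q - q\,a \;=\; \sum_{w\in\Wa} a_w\bigl(w\cdot q - q\bigr)\,w,
\]
and since the $w$ form a $Q(T)$-basis, the vanishing of this expression is equivalent to the coordinate-wise condition $a_w(w\cdot q - q)=0$ for every $w$ and every $q$.

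Next I would argue that centralizing $R(T)$ is the same as centralizing $Q(T)$: any $a\in\bK_{Q(T)}$ commuting with all of $R(T)$ automatically commutes with inverses in $R(T)$ (multiply $a s = s a$ on both sides by $s^{-1}$) and hence with $Q(T)=\Frac(R(T))$. Because $Q(T)$ is a field, the only way $a_w(w\cdot q - q) = 0$ can hold for all $q\in Q(T)$ is if either $a_w=0$ or $w$ acts trivially on $Q(T)$. The action of $\Wa$ on $Q(T)$ is the level-zero action factoring through $\cl_{\Wa}:\Wa\cong Q^\vee\rtimes \Wf\to \Wf$, whose kernel is exactly the translation subgroup $\{t_\mu : \mu\in Q^\vee\}$. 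To close this step I must check that no nontrivial element of $\Wf$ acts trivially on $Q(T)$, which follows because $\Wf$ acts faithfully on $\Xfin$ and hence on $R(T)$ and its fraction field.

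Combining these observations,
\[
Z_{\bK_{Q(T)}}(R(T)) \;=\; \bigoplus_{\mu\in Q^\vee} Q(T)\,t_\mu.
\]
Finally, since $\LL = Z_{\bK}(R(T)) = \bK\cap Z_{\bK_{Q(T)}}(R(T))$ (the inclusion $\bK\hookrightarrow \bK_{Q(T)}$ of \eqref{E:Tbasis} identifies $\bK$ with an $R(T)$-subring), intersecting both sides with $\bK$ yields the claim.

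The only nontrivial point is the faithfulness of $\Wf$ on $Q(T)$, which is standard; everything else is a direct unwinding of the smash-product multiplication and the definition $\bK_{Q(T)} = Q(T)\otimes_{R(T)}\bK$. I do not anticipate a genuine obstacle.
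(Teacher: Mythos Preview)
Your argument is correct. The paper does not give its own proof of this lemma; it simply cites \cite[Lemma 5.2]{LSS} as a basic result. Your approach---computing the centralizer in $\bK_{Q(T)}$ using the $Q(T)$-basis $\{w\}$, identifying the elements $w$ that act trivially on $R(T)$ via the level-zero action as exactly the translations $t_\mu$, and then intersecting with $\bK$---is the standard one and is essentially what appears in \cite{LSS}.
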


\begin{thm}[{\cite[Theorem 5.3]{LSS}}] \label{T:centralizer}
There is an isomorphism $\tocentralizer: K_*^T(\gr) \to \LL$ making the following commutative diagram of ring and left $R(T)$-module homomorphisms:
\begin{align*}
\begin{diagram}
\node{K_*^T(\gr)}\arrow {s,t}{\varphi_*} \arrow {e,t}{\tocentralizer} \node{\LL}  
\arrow {s,J}{}
\\ \node{K_*^T(\flags)} \arrow {e,b}{\tonilHecke} \node{\bK} 
\end{diagram}
\end{align*}
\end{thm}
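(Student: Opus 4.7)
The plan is to define $k := \tonilHecke \circ \varphi_*$ and then prove that this map takes values in $\LL \subseteq \bK$ and is an $R(T)$-algebra isomorphism onto $\LL$. The $R(T)$-linearity of $k$ is automatic from the construction.

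The first step is to show $\tonilHecke(\varphi_*(\alpha)) \in \LL$ for every $\alpha \in K_*^T(\gr)$. By the preceding lemma, an element of $\bK$ lies in $\LL$ iff, when expanded in the $\{qw\}$-basis of $\bK_{Q(T)}$, it is supported only on translation group elements $t_\mu$ with $\mu \in Q^\vee$. The wrong-way localization formula $\varphi^*(\psi)(v) = \psi(t_\mu)$ (for $v \in \Wz$ and $t_\mu \in v\Wf$) asserts that $\varphi^*(\psi)$ depends on $\psi$ only through its values at translations. Dualizing via the pairing $\pair{\cdot}{\cdot}$ and transporting through $\tonilHecke$, together with the small-torus GKM characterization of $K^*_T(\flags)$ as a subalgebra of $\F(\Wa, R(T))$, yields the required support statement. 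Equivalently, the translation part of $\bK_{Q(T)}$ commutes with $R(T)$ under the level-zero action, so $\tonilHecke(\varphi_*(\alpha))$ centralizes $R(T)$.

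The second step is to show $k$ is a bijection onto $\LL$. For $v \in \Wz$, the Schubert basis element $\psi_v^\gr \in K_*^T(\gr)$ should map under $k$ to an element of $\LL$ whose expansion in the $T_w$-basis of $\bK$ is triangular with respect to Bruhat order on $\Wz$, with leading term $T_v$. Since $\LL$ carries a natural $R(T)$-basis indexed by $\Wz$ (via the translation part), this triangularity upgrades the $R(T)$-module injection into an isomorphism onto $\LL$.

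The main obstacle is Step 3: verifying that $k$ intertwines the Pontryagin product on $K_*^T(\gr)$, dual to the coproduct $\Delta_\gr$ coming from the $H$-space structure on $\Omega K$, with the product in $\bK$. I would test this identity on Schubert classes: show $\pair{k(\alpha)\,k(\beta)}{\psi} = \pair{\alpha \otimes \beta}{\Delta_\gr(\psi)}$ for $\psi \in K_T^*(\gr)$. The commutativity of $\Omega K$ as an $H$-space matches the commutativity of $\LL$ (its elements are supported on translations, which commute with $R(T)$ and with each other), which provides a structural sanity check that the match-up is sensible. This step is the $K$-theoretic analogue of Peterson's theorem; the two natural routes are either direct verification on a set of generators of $K_*^T(\gr)$ via explicit localization computations, or reduction to the established cohomological Peterson theorem via an appropriate filtration by degree or length.
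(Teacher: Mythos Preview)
This theorem is not proved in the present paper: it is imported verbatim from \cite[Theorem~5.3]{LSS}, and no argument is given here beyond the citation. So there is no ``paper's own proof'' to compare against.

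That said, your outline tracks the strategy of \cite{LSS} reasonably well: define $k=\tonilHecke\circ\varphi_*$, show the image lies in $\LL$, establish bijectivity via a Schubert-basis triangularity, and check multiplicativity. Two comments. First, your Step~3 essentially presupposes Theorem~\ref{T:centralizerSchub} (the leading-term statement $k_w = T_w + \cdots$), which in \cite{LSS} is proved alongside Theorem~\ref{T:centralizer} rather than as input to it; you should not invoke it as a black box here. Second, your Step~4 is not a proof but an admission of the difficulty: you name two possible routes (generators-and-localization, or reduction to cohomology by filtration) without executing either. In \cite{LSS} this is handled by identifying the Pontryagin product on $K_*^T(\gr)$ with multiplication in $\bK$ via the Hopf structure and the explicit pairing, not by reduction to the cohomological Peterson theorem. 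As written, your proposal is a correct plan with the central step left open.
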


\subsection{Pullback from affine Grassmannian}
Recall that $p: \flags \to \gr$ denotes the natural projection. 
Define $\theta:= p^* \circ \varphi^*$, so that $\theta: K^*_T(\flags) \to K^*_T(\flags)$ is the pullback map in equivariant $K$-theory of the following composition
\begin{align}\label{eq:theta}
LK/\TR \xrightarrow{\;\;\;p\;\;\;} \Omega K \xrightarrow{\;\;\;\varphi\;\;\;} LK/\TR
\end{align}
where abusing notation, we are denoting also by $p$ the natural quotient map $LK/\TR \to LK/K \simeq \Omega K$.

\begin{lem} \label{L:wrongwayloc}
	For all $\mu\in Q^\vee$, $v\in\Wf$, and $\psi \in K^*_T(\flags)$ we have	\begin{align}
		(\theta \psi)({t_\mu v}) = \psi(t_\mu).
	\end{align}
\end{lem}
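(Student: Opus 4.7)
The plan is to compute $\theta\psi$ localized at $t_\mu v$ directly on the topological model $\flags \simeq LK/\TR$ and $\gr \simeq \Omega K = LK/K$, by tracing the $T$-fixed point $x_{t_\mu v}$ through the composite self-map of $LK/\TR$ induced by \eqref{eq:theta}, and then invoking naturality of pullback.

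Denote by $\theta^\top = \varphi \circ p : LK/\TR \to LK/\TR$ the topological composition. First I would identify the fixed point $x_{t_\mu v} \in \flags$ with the class $t_\mu v\, \TR \in LK/\TR$, where $t_\mu$ is interpreted as the cocharacter loop $z \mapsto \mu(z) \in T \subset K$ (a standard consequence of $\Wa = Q^\vee \rtimes \Wf$ and the identification of $T$-fixed points of $\flags$ with $\Wa$). Under the projection $p: LK/\TR \to LK/K$, the class $t_\mu v\, \TR$ maps to $t_\mu v\, K = t_\mu K$, since $v \in \Wf$ lifts to an element of $N_K(\TR) \subset K$. Under the standard identification $LK/K \simeq \Omega K$, this is exactly the cocharacter loop $\mu$ — the $T$-fixed point of $\Omega K$ indexed by $\mu \in Q^\vee$. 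Finally, $\varphi: \Omega K \hookrightarrow LK \to LK/\TR$ sends this loop to $\mu\,\TR = x_{t_\mu}$. Therefore $\theta^\top(x_{t_\mu v}) = x_{t_\mu}$.

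The lemma then follows by functoriality of pullback:
\begin{align*}
(\theta\psi)(t_\mu v) \;=\; \iota_{t_\mu v}^*(\theta^\top)^*\psi \;=\; (\theta^\top \circ \iota_{t_\mu v})^*\psi \;=\; \iota_{t_\mu}^*\psi \;=\; \psi(t_\mu).
\end{align*}

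There is no substantive obstacle; the only care required is in the bookkeeping of the fixed-point identifications, namely that the $T$-fixed points of $\Omega K$ are the cocharacter loops $\mu \in Q^\vee$, and that under $\Wa = Q^\vee \rtimes \Wf$ the $\Wf$-factor is absorbed when passing from $LK/\TR$ to $LK/K$. One could equivalently first observe that $\theta\psi \in \Im(p^*)$ forces $(\theta\psi)(t_\mu v) = (\theta\psi)(t_\mu)$ by the characterization of $\Im(p^*)$ recalled in Section~\ref{sec:nilHecke}, and then reduce to computing $(\theta\psi)(t_\mu)$; but the direct fixed-point trace above handles both steps simultaneously.
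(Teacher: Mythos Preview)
Your proof is correct and is precisely the paper's approach, only spelled out in greater detail: the paper's proof is the single sentence ``It is enough to observe that the $\TR$-fixed point $t_\mu v$ maps to $t_\mu$ under the composition \eqref{eq:theta},'' which is exactly your trace $t_\mu v\,\TR \mapsto t_\mu K \mapsto t_\mu\,\TR$ followed by functoriality of pullback.
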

\begin{proof} The translation element $t_\mu$ defines the based loop given by the cocharacter
$\mu\in \Hom_{\text{alg. gp}}(\C^*,T)$ evaluated on the unit circle $S^1 \subset \C^*$. The unique $\TR$-fixed point in $t_\mu v  K \cap \Omega K$ is 
$t_\mu$. Thus under the composition \eqref{eq:theta} $t_\mu v$ maps to $t_\mu$. The Lemma follows by the definition of pullback.
\end{proof}

\subsection{Coaction}

The inclusion $\Omega K\hookrightarrow LK$ induces an action $\Omega K \times LK/\TR \to LK/\TR$ of $\Omega K$ on $LK/\TR$. This action is $\TR$-equivariant where $\TR$ acts diagonally on the direct product, acting on $\Omega K$ by conjugation and on $LK/\TR$ by left translation. Applying the covariant functor $K_*^\TR$
we obtain the map $K^T_*(\gr) \otimes_{R(T)} K^T_*(\flags) \cong
K^T_*(\gr \times \flags) \to K^T_*(\flags)$.  
We have the commutative diagram
\begin{align*}
\begin{diagram}
\node{K^T_*(\gr) \otimes_{R(T)} K^T_*(\flags)}\arrow {s,t}{\tocentralizer \otimes \tonilHecke} \arrow {e} \node{K^T_*(\flags)}  \arrow {s,b}{{\tonilHecke}}
\\ \node{\LL \otimes_{R(T)} \bK} \arrow {e,b}{\text {mult}} \node{\bK} 
\end{diagram}
\end{align*}

Via the pairing \eqref{eq:integral pairing} the dual map is the coproduct 
$$\Delta: K_T^*(\flags)\longrightarrow K_T^*(\gr) \otimes_{R(T)} K_T^*(\flags).
$$
Note that $\Delta|_{K_T^*(\gr)}$ is the usual coproduct of $K_T^*(\gr)$, part of the $R(T)$-Hopf algebra structure of $K_T^*(\gr)$, and abusing notation we often denote $\Delta|_{K_T^*(\gr)}$ by $\Delta$.  Often, we will think of the image of $\Delta$ inside $K_T^*(\flags) \otimes_{R(T)} K_T^*(\flags)$ via the inclusion $p^*:K_T^*(\gr) \to K_T^*(\flags)$.

\begin{prop}\label{P:coprod} For all $a\in\LL$, $b\in\bK$, and $\psi \in K_T^*(\flags)$ we have
	\begin{align}\label{E:coprod}
		\pair{a b}{\psi} &= \sum_{(\psi)} \pair{a}{\psi_{(1)}} \pair{b}{\psi_{(2)}} 
	\intertext{where}
	\label{eq:sweedler}
	\Delta(\psi) &= \sum_{(\psi)} \psi_{(1)} \otimes \psi_{(2)}.
	\end{align}
\end{prop}
\begin{proof}
By definition and using \eqref{eq:sweedler} we have $\pair{a b}{\psi} = \sum_{(\psi)} \pair{a}{\psi^\gr_{(1)}}_{\gr} \pair{b}{\psi_{(2)}} $ where $\pair{a}{\psi}_{\gr}$ is the pairing between $\LL$ and $K^*_T(\gr)$ induced by Theorem \ref{T:centralizer} and the duality between $K_*^T(\gr)$ and $K^*_T(\gr)$.  But then since $\varphi^* \circ p^*$ is the identity, we have that $\pair{a}{\psi^\gr}_\gr =
\pair{a}{(\varphi^* \circ p^*)(\psi^\gr)}_{\gr} =
\pair{a }{p^*(\psi^\gr)}$. In the second equality, we have used the projection formula
\begin{align}\label{E:projectionformula}
	\pair{a}{\varphi^*(b)}_{\thickaffgr} = \pair{\varphi_*(a)}{b}_{\flags}
	\qquad\text {for $a\in K_*^T(\gr)$, $b\in K_T^*(\flags)$.}
\end{align}
This gives the desired formula.
\end{proof}

\begin{lem}\label{L:coprod}
Let $\psi \in K_T^*(\flags)$.  If $\Delta(\psi) = \sum_{(\psi)} \psi_{(1)} \otimes \psi_{(2)}$, then $\theta(\psi_{(1)}) = \psi_{(1)}$.
\end{lem}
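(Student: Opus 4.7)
The plan is to observe that $\theta$ acts as the identity on the image of the pullback $p^*: K^*_T(\gr) \to K^*_T(\flags)$, and then to note that by the very definition of $\Delta$ the first tensor factor $\psi_{(1)}$ already lies in this image.

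For the first point, I would argue as follows. Geometrically, the composition $\Omega K \xrightarrow{\varphi} LK/\TR \xrightarrow{p} LK/K \simeq \Omega K$ is the identity map of $\Omega K$, since $\TR \subset K$ and $\varphi$ is induced by the inclusion of based loops into $LK$ followed by projection. Hence $\varphi^* \circ p^* = \id_{K^*_T(\gr)}$, which was the fact already invoked in the proof of Proposition \ref{P:coprod}. It follows that
\[
\theta \circ p^* \;=\; p^* \circ \varphi^* \circ p^* \;=\; p^*,
\]
so $\theta$ fixes every class in $p^*(K^*_T(\gr))$. Alternatively, one may verify this pointwise via equivariant localization: if $\eta \in p^*(K^*_T(\gr))$, then by the characterization of the image of $p^*$ recalled in Section~\ref{sec:nilHecke}, $\eta(wv) = \eta(w)$ for all $w \in \Wa$ and $v \in \Wf$. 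In particular $\eta(t_\mu v) = \eta(t_\mu)$, which by Lemma \ref{L:wrongwayloc} equals $(\theta\eta)(t_\mu v)$; since every element of $\Wa$ has the form $t_\mu v$ with $\mu \in Q^\vee$ and $v \in \Wf$, we conclude $\theta\eta = \eta$.

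For the second point, by construction $\Delta$ takes values in $K^*_T(\gr) \otimes_{R(T)} K^*_T(\flags)$, and we are interpreting its image inside $K^*_T(\flags) \otimes_{R(T)} K^*_T(\flags)$ by applying $p^*$ to the first tensor factor (as stated in the discussion preceding Proposition \ref{P:coprod}). Thus $\psi_{(1)} = p^*(\psi^\gr_{(1)})$ for some $\psi^\gr_{(1)} \in K^*_T(\gr)$, and the previous paragraph yields $\theta(\psi_{(1)}) = \psi_{(1)}$.

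There is no real obstacle here: the lemma essentially repackages the idempotency $p^* \circ \varphi^* \circ p^* = p^*$ that was already built into the construction of the coproduct. The only small care needed is to be explicit about the convention that embeds $K^*_T(\gr) \otimes K^*_T(\flags)$ into $K^*_T(\flags) \otimes K^*_T(\flags)$ via $p^*$, so that the assertion $\theta(\psi_{(1)}) = \psi_{(1)}$ even makes sense as an equality in $K^*_T(\flags)$.
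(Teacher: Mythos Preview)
Your argument is correct and is precisely the reasoning the paper has in mind: the lemma is stated without proof because it follows immediately from the definition of $\Delta$ (first tensor factor lands in $p^*(K_T^*(\gr))$) together with $\varphi^*\circ p^*=\id$, which the paper already used in the proof of Proposition~\ref{P:coprod}. There is nothing to add.
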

\begin{proof} This follows from the fact that the elements in the first tensor factor are in fact in the image of $K_T^*(\gr)$ inside $K_T^*(\flags)$.
\end{proof}

\subsection{Loop evaluation at identity}
Let $\ev_1:LK/\TR\to K/\TR$ be induced by evaluation of a loop at the identity. Since this is a $\TR$-equivariant map (via left translation) it induces an $R(T)$-algebra homomorphism 
\begin{equation}\label{eq:ev1}
\ev_1^*:K_T^*(G/B)\to K_T^*(\flags).
\end{equation}
Let $q: K/\TR \to LK/\TR$ be the natural inclusion; it is $\TR$-equivariant for left translation.  The algebraic analogue of $q$ identifies $G/B$ with the finite-dimensional Schubert variety $X_{w_0} \subset \flags$.

Define $\eta := \ev_1^* \circ q^*$ so that $\eta: K^*_T(\flags) \to K^*_T(\flags)$ is the pullback map in equivariant $K$-theory of the following composition
\begin{align}\label{eq:eta}
LK/\TR \xrightarrow{\;\;\;\ev_1\;\;\;} K/\TR \xrightarrow{\;\;\;q\;\;\;} LK/\TR.
\end{align}

\begin{lem} \label{L:evoneloc} For all $\mu\in Q^\vee$, $v\in\Wf$,
and $\psi\in K_T^*(\flags)$ we have
\begin{align}\label{E:evoneloc}
	(\eta\psi)(t_\mu v) = \psi(v).
\end{align}
\end{lem}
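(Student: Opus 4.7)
The plan is to mirror the proof of Lemma \ref{L:wrongwayloc}: exploit the $\TR$-equivariance of the composition \eqref{eq:eta} to reduce the claim to tracking the image of the $\TR$-fixed point $t_\mu v$ under $q \circ \ev_1$ on the level of fixed point sets. Once that image is identified as the fixed point $v \in LK/\TR$, functoriality of localization immediately yields
\[
(\eta \psi)(t_\mu v) \;=\; \iota_{t_\mu v}^*\bigl((q\circ \ev_1)^* \psi\bigr) \;=\; \iota_{(q\circ\ev_1)(t_\mu v)}^*\psi \;=\; \iota_v^*\psi \;=\; \psi(v).
\]

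The main content is therefore the geometric claim that $(q \circ \ev_1)(t_\mu v) = v$ as $\TR$-fixed points of $LK/\TR$. To see this, I would recall that the $\TR$-fixed points of $LK/\TR$ are indexed by $\Wa = Q^\vee \rtimes \Wf$, with $t_\mu v$ represented by the loop $z \mapsto \mu(z) \dot{v} \,\TR$, where $\mu : S^1 \to \TR$ is the cocharacter corresponding to the coroot $\mu \in Q^\vee$ and $\dot v \in N_K(\TR)$ is a lift of $v$. Evaluating at $z = 1$ gives $\mu(1)\dot v\, \TR = \dot v \,\TR$, since $\mu(1) = e$, so $\ev_1(t_\mu v) = v \in K/\TR$. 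Then $q$ sends this to the constant loop $\dot v \, \TR$, which is precisely the fixed point $v \in LK/\TR$.

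The only real step that requires attention is confirming the description of the fixed points and of the loops representing affine Weyl group elements; this is purely topological and uses no properties of $\psi$. Thus the lemma follows directly from the equivariance of $\ev_1$ and $q$ together with functoriality of equivariant pullback under restriction to fixed points. There is no serious obstacle; the argument is structurally identical to that of Lemma \ref{L:wrongwayloc}, with $\ev_1$ in place of $p$ and the constant-loop inclusion $q$ in place of $\varphi$.
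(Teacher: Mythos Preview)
Your argument is correct and is essentially identical to the paper's proof, which consists of the single observation that the $\TR$-fixed point $t_\mu v$ is sent to $v$ under the composition \eqref{eq:eta}. You have simply unpacked this one sentence in detail.
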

\begin{proof} Recalling the description of the based loop defined by $t_\mu$ from the proof of Lemma \ref{L:wrongwayloc}, evaluating the loop $t_\mu v$ at the identity yields the value $v$.
Thus the $\TR$-fixed point $t_\mu v$ is sent to $v$ under the composition \eqref{eq:eta}.
\end{proof}

\begin{lem} \label{L:evonelinebundle} For all $\la\in P$,
	\begin{align}\label{E:evonelinebundle}
		\ev_1^*([\cL_\la^{G/B}]) = [\cL_\la].
	\end{align}
\end{lem}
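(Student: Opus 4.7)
The plan is to verify the equality by $T$-equivariant localization at the $\TR$-fixed points of $\flags$, which are indexed by $\Wa$. Since the localization map $\prod_{w\in\Wa} \iota_w^* : K^*_T(\flags) \hookrightarrow \F(\Wa, R(T))$ is injective, it suffices to check that both sides have the same value at every $t_\mu v$ with $\mu\in Q^\vee$ and $v\in\Wf$.

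First I would apply Lemma \ref{L:evoneloc} to compute
\[
\ev_1^*\bigl([\cL_\la^{G/B}]\bigr)(t_\mu v) \;=\; (\eta [\cL_\la^{G/B}])(t_\mu v) \;=\; [\cL_\la^{G/B}](v).
\]
Here one must be careful: strictly speaking $\eta = \ev_1^* \circ q^*$ localizes after restriction along $q$, but the $\TR$-fixed point $t_\mu v \in \flags$ is sent to $v \in K/\TR$ by $\ev_1$, so the value is the localization of $[\cL_\la^{G/B}]$ at the finite fixed point $v \in G/B$. The standard formula for finite-dimensional line bundle classes on $G/B$ gives $[\cL_\la^{G/B}](v) = e^{v\la}$ (this is just the finite analogue of \eqref{E:linebundleloc}).

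On the other side, \eqref{E:linebundleloc} directly yields $[\cL_\la^{\flags}](t_\mu v) = e^{v\la}$. The two localizations agree at every fixed point, so the classes coincide.

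The only subtle point is making sure that the formula for the localization of $[\cL_\la^{G/B}]$ at $v\in\Wf$ really is $e^{v\la}$ and that this matches the convention used in \eqref{E:linebundleloc}. Both use the level-zero action of the Weyl group, so the match is automatic, but if one wanted a more structural proof, one could alternatively argue that the pullback $\ev_1^*$ of equivariant line bundles is functorial and identify the weights via the construction of $\cL_\la$ as associated bundles. Either way the proof is genuinely short; the localization check is the cleanest route.
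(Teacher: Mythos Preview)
Your proof is correct and takes essentially the same localization approach as the paper: both check the identity at each fixed point $t_\mu v$, using that $\ev_1$ sends $t_\mu v$ to $v$ so that both sides localize to $e^{v\la}$. The initial appeal to Lemma~\ref{L:evoneloc} is slightly off (that lemma concerns $\eta$ on $K_T^*(\flags)$, not $\ev_1^*$ on $K_T^*(G/B)$), but your self-correction---direct functoriality of localization under pullback---is exactly the paper's argument.
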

\begin{proof} For all $\mu\in Q^\vee$ and $u\in W$ we have
	\begin{align*}
		i_{t_\mu v}^*(\ev_1^*([\cL_\la^{G/B}])) 
		&= i_v^*([\cL_\la^{G/B}]) 
		= v \cdot e^\la 
		= (t_\mu v) \cdot e^\la 
		= i_{t_\mu v}^*([\cL_\la]). \qedhere
	\end{align*}
\end{proof}

\subsection{Coproduct identity}
The following identity is the main result of this section.
\begin{prop}\label{P:coprodbulletK}
For $\psi \in K^*_T(\flags)$ and $a \in \bK$, we have
$$
a \bullet \psi = \sum_{(\psi)} \psi_{(1)} \cup \eta(a \bullet \psi_{(2)})
$$
where $\Delta(\psi) = \sum_{(\psi)} \psi_{(1)} \otimes \psi_{(2)}$.
In particular, taking $a = 1$, we have the identity
$$
\cup \circ (1 \otimes \eta) \circ \Delta = 1
$$
in $\End_{R(T)}(K^*_T(\flags))$.
\end{prop}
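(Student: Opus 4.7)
The plan is to verify the identity after localizing at each $\TR$-fixed point, using the embedding $K^*_T(\flags)\hookrightarrow \F(\Wa,R(T))$. Since $\Wa=Q^\vee\rtimes \Wf$, every element of $\Wa$ can be written uniquely as $t_\mu v$ with $\mu\in Q^\vee$ and $v\in\Wf$, so it suffices to compare $(a\bullet\psi)(t_\mu v)$ with $\bigl(\sum_{(\psi)}\psi_{(1)}\cup\eta(a\bullet\psi_{(2)})\bigr)(t_\mu v)$ for all such $\mu,v$.

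The left-hand side unfolds as $(a\bullet\psi)(t_\mu v)=\psi((t_\mu v)a)=\pair{t_\mu\cdot(va)}{\psi}$ by associativity in $\bK$. The key structural observation is that $t_\mu$ lies in the Peterson subalgebra $\LL$: under the level-zero action $\cl_{\Wa}$, translations act trivially on $R(T)$, so $t_\mu q = q\, t_\mu$ in $\bK$ for all $q\in R(T)$. This is precisely what is needed in order to invoke Proposition \ref{P:coprod} with $a\leftarrow t_\mu\in\LL$ and $b\leftarrow va\in\bK$, yielding
\[
\pair{t_\mu\cdot(va)}{\psi}=\sum_{(\psi)}\pair{t_\mu}{\psi_{(1)}}\pair{va}{\psi_{(2)}}.
\]

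For the right-hand side, the pointwise product of the localization embedding identifies $\cup$ with multiplication in $\F(\Wa,R(T))$, so $\bigl(\psi_{(1)}\cup\eta(a\bullet\psi_{(2)})\bigr)(t_\mu v)=\psi_{(1)}(t_\mu v)\cdot\eta(a\bullet\psi_{(2)})(t_\mu v)$. Two facts then match the two factors with the corresponding terms on the left: first, since $\psi_{(1)}\in K^*_T(\gr)$, the characterization of $p^*$ gives $\psi_{(1)}(t_\mu v)=\psi_{(1)}(t_\mu)=\pair{t_\mu}{\psi_{(1)}}$; second, Lemma~\ref{L:evoneloc} gives $\eta(a\bullet\psi_{(2)})(t_\mu v)=(a\bullet\psi_{(2)})(v)=\psi_{(2)}(va)=\pair{va}{\psi_{(2)}}$. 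Summing over $(\psi)$ matches the expansion from Proposition \ref{P:coprod}, establishing equality of localizations and hence the proposition. The $a=1$ case follows immediately since $1\bullet\psi=\psi$.

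The main conceptual step, and the only place where real content is used, is the factorization $(t_\mu v)a = t_\mu(va)$ together with $t_\mu\in\LL$, which packages the dependence on $\mu$ into an element of the Peterson subalgebra (so that $\Delta$ applies via Proposition \ref{P:coprod}) and the dependence on $v\in \Wf$ into the finite-part evaluation $\eta$ (via Lemma~\ref{L:evoneloc}). No delicate estimate or deep computation is needed beyond this; the rest is bookkeeping between the two realizations of the pairing in $\LL\times K^*_T(\gr)$ and $\bK\times K^*_T(\flags)$.
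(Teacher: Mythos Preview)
Your proof is correct and follows essentially the same approach as the paper's: localize at $t_\mu v$, write $(t_\mu v)a = t_\mu(va)$ with $t_\mu\in\LL$, apply Proposition~\ref{P:coprod}, and then use the $\Wf$-right-invariance of $\psi_{(1)}$ together with Lemma~\ref{L:evoneloc}. The only cosmetic difference is that the paper phrases the step $\psi_{(1)}(t_\mu v)=\psi_{(1)}(t_\mu)$ via Lemmas~\ref{L:wrongwayloc} and~\ref{L:coprod} (i.e.\ $\theta(\psi_{(1)})=\psi_{(1)}$), whereas you invoke directly that $\psi_{(1)}$ lies in (the image of) $K_T^*(\gr)$; these are the same fact.
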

\begin{proof}
For $\mu \in Q^\vee$ and $v \in \Wf$, we compute
\begin{align*}
\pair{t_\mu v}{a \bullet \psi}  
&= \pair{t_\mu v a}{\psi} \\
&= \sum_{(\psi)} \pair{t_\mu}{\psi_{(1)}} \pair{va}{\psi_{(2)}}& \mbox{by Proposition \ref{P:coprod}} \\
& = \sum_{(\psi)} \pair{t_\mu}{\psi_{(1)}} \pair{v}{a \bullet \psi_{(2)}} \\
& = \sum_{(\psi)} \pair{t_\mu v}{\psi_{(1)}} \pair{t_\mu v}{\eta(a \bullet \psi_{(2)})} & \mbox{by Lemmas \ref{L:wrongwayloc}, \ref{L:coprod} and \ref{L:evoneloc}} \\
&= \pair{t_\mu v}{ \sum_{(\psi)} \psi_{(1)} \cup \eta(a \bullet \psi_{(2)})}. &&\qedhere
\end{align*}
\end{proof}

\subsection{Commutation relations}
We record additional commutation relations involving the nilHecke algebra actions, and the endomorphisms $\theta$ and $\eta$.

Let $\kappa:K^*_T(\flags) \to K^*_T(\flags)$ be the pullback map in equivariant $K$-theory induced by the composition
\begin{align} \label{eq:kappa}
\flags \longrightarrow \id \longrightarrow \flags
\end{align}
where $\id$ denotes the basepoint of $\flags$. It is an $R(T)$-algebra homomorphism.

\begin{lem} \label{L:kappaloc} For all $\mu\in Q^\vee$, $v\in\Wf$,
and $\psi\in K_T^*(\gr)$ we have
\begin{align}\label{E:kappaloc}
	\kappa(\psi)(t_\mu v) = \psi(\id).
\end{align}
\end{lem}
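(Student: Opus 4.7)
The plan is to mimic the short proofs of Lemmas~\ref{L:wrongwayloc} and \ref{L:evoneloc}: since $\kappa$ is defined as the pullback in equivariant $K$-theory of a map $\flags\to\flags$ that factors through the single point $\id$, localization at any $\TR$-fixed point should simply return the value at $\id$.

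More concretely, I would first observe that restriction to a $\TR$-fixed point is functorial: for any $\TR$-equivariant map $f:\flags\to\flags$ and any $\TR$-fixed point $x\in\flags$ mapped by $f$ to $x'\in\flags$, the diagram of pullbacks gives $\iota_x^*\circ f^* = \iota_{x'}^*$ as maps $K_T^*(\flags)\to K_T^*(\pt)$. Applied to the composition \eqref{eq:kappa}, whose image is the single $\TR$-fixed point $\id$, this yields $\iota_x^*\circ \kappa = \iota_{\id}^*$ for every $\TR$-fixed point $x$.

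Taking $x=t_\mu v$, which is a $\TR$-fixed point of $\flags$ (being the image of a Weyl-group element in the affine Weyl group), the composition \eqref{eq:kappa} sends $t_\mu v$ to $\id$. Hence
\begin{align*}
\kappa(\psi)(t_\mu v) \;=\; \iota_{t_\mu v}^*(\kappa(\psi)) \;=\; \iota_{\id}^*(\psi) \;=\; \psi(\id),
\end{align*}
which is exactly \eqref{E:kappaloc}.

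There is no real obstacle here; the only thing to check is that ``the basepoint $\id$'' is indeed a $\TR$-fixed point (so that evaluation $\psi(\id)$ makes sense as an element of $R(T)$ in the same way as $\psi(t_\mu v)$) and that the map factors through it as a $\TR$-equivariant map of spaces, both of which are built into the definition \eqref{eq:kappa}. The statement is the exact $K$-theoretic analogue of the observation already used twice in this section, so it fits as a one-line localization check.
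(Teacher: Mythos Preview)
Your argument is correct and is exactly the localization check the paper has in mind; the paper states Lemma~\ref{L:kappaloc} without proof, but the intended justification is precisely the one-line observation used for Lemmas~\ref{L:wrongwayloc} and \ref{L:evoneloc}, namely that every $\TR$-fixed point is sent to $\id$ under the composition~\eqref{eq:kappa}. (Note the statement's hypothesis $\psi\in K_T^*(\gr)$ is a typo for $K_T^*(\flags)$, which is how you have implicitly read it.)
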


\begin{lem}\label{L:endo}
As $R(T)$-module endomorphisms of $K^*_T(\flags)$, we have the relations
$$
\theta^2 = \theta, \qquad \eta^2 = \eta, \qquad \kappa^2 = \kappa;
$$
$$
\theta \eta =  \eta \theta = \theta \kappa = \kappa \theta = \eta \kappa = \kappa \eta = \kappa.
$$

\end{lem}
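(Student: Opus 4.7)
The plan is to verify all the identities by evaluating both sides at an arbitrary $T$-fixed point $t_\mu v$, $\mu \in Q^\vee$, $v \in \Wf$, and using the injection $\prod_w \iota_w^* : K^*_T(\flags) \hookrightarrow \F(\Wa,R(T))$. This reduces the whole lemma to mechanical bookkeeping with the three localization formulae
\[
(\theta\psi)(t_\mu v) = \psi(t_\mu), \qquad (\eta\psi)(t_\mu v) = \psi(v), \qquad (\kappa\psi)(t_\mu v) = \psi(\id),
\]
furnished by Lemmas \ref{L:wrongwayloc}, \ref{L:evoneloc}, and \ref{L:kappaloc} respectively. The conceptual content is that $\theta$ forgets the finite factor $v$, $\eta$ forgets the translation factor $t_\mu$, and $\kappa$ forgets both; composing these operations should therefore collapse in the obvious way.

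For the three idempotency relations, I would rewrite the target element in the form $t_{\mu'} v'$ appropriate for a second application: $t_\mu = t_\mu \cdot e$ gives $(\theta^2\psi)(t_\mu v) = (\theta\psi)(t_\mu) = \psi(t_\mu) = (\theta\psi)(t_\mu v)$; $v = t_0 \cdot v$ gives $(\eta^2\psi)(t_\mu v) = (\eta\psi)(v) = \psi(v) = (\eta\psi)(t_\mu v)$; and $\id = t_0 \cdot e$ gives $\kappa^2 = \kappa$ similarly.

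For the six composition relations, the same substitutions yield
\[
(\theta\eta\psi)(t_\mu v) = (\eta\psi)(t_\mu) = \psi(e), \quad (\eta\theta\psi)(t_\mu v) = (\theta\psi)(v) = \psi(t_0) = \psi(e),
\]
and analogous one-line checks handle $\theta\kappa$, $\kappa\theta$, $\eta\kappa$, $\kappa\eta$, each reducing to $\psi(\id)$. Comparing to $(\kappa\psi)(t_\mu v) = \psi(\id)$ closes out every identity.

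The only mild subtlety is making sure the localization lemmas apply verbatim to the iterated compositions, which they do because $\theta$, $\eta$, $\kappa$ are $R(T)$-algebra endomorphisms of $K^*_T(\flags)$ (they are pullbacks of $T_{\R}$-equivariant self-maps of $LK/\TR$), so their images lie in $K^*_T(\flags)$ and can themselves be localized. There is no genuine obstacle here; the lemma is essentially a corollary of the fact that the three geometric self-maps in \eqref{eq:theta}, \eqref{eq:eta}, \eqref{eq:kappa} factor through $\Omega K$, $K/\TR$, and a point respectively, and the three distinguished inclusions $\Omega K, K/\TR, \{\id\} \hookrightarrow LK/\TR$ are compatible with one another on $T_\R$-fixed points.
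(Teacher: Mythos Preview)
Your proof is correct and follows precisely the approach the paper indicates: the paper's proof reads ``Straightforward from Lemmas~\ref{L:wrongwayloc}, \ref{L:evoneloc}, and \ref{L:kappaloc},'' and you have simply spelled out those straightforward localization computations at $t_\mu v$. There is no difference in method.
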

\begin{proof}
Straightforward from Lemmas \ref{L:wrongwayloc}, \ref{L:evoneloc}, and \ref{L:kappaloc}.
\end{proof}

For $w \in \Wa$, define the endomorphism $$w \odot := (w \cdot) \circ (w \bullet) = (w \bullet) \circ (w \cdot)$$ of $K^*_T(\flags)$. 

\begin{prop}\label{P:theta}
The map $\theta$ interacts with the two actions $\cdot$ and $\bullet$ of $\bK$ on $K^*_T(\flags)$ in the following way:
\begin{enumerate}
\item
$(q \cdot) \circ \theta = \theta \circ (q\cdot)$
\item
$(t_\mu \cdot) \circ \theta = \theta \circ (t_\mu \cdot)$
\item
$(w \cdot) \circ \theta = \theta \circ (w \odot)$
\item
$(w \bullet) \circ \theta = \theta$
\end{enumerate}
where $q \in R(T)$, $w \in \Wf$, and $\mu\in Q^\vee$.  By (1), (2), (3), we see that $\theta(K^*_T(\flags)) = p^*(K^*_T(\gr))$ is a $\bK$-submodule of $K^*_T(\flags)$ under the $\cdot$ action.
\end{prop}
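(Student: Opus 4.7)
The plan is to verify each of the four identities by localization at the $T$-fixed points $t_\mu v$ of $\flags$ (with $\mu \in Q^\vee$ and $v \in \Wf$), using $(\theta\psi)(t_\mu v) = \psi(t_\mu)$ from Lemma \ref{L:wrongwayloc} together with the defining formulas for the $\cdot$ and $\bullet$ actions. The key background fact used throughout is that the $\Wa$-action on $R(T)$ factors through $\cl_{\Wa}:\Wa\to\Wf$, so $t_\mu$ acts trivially on $R(T)$ and $w t_\mu w^{-1} = t_{w\mu}$ for $w\in\Wf$.

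Identities (1), (2), and (4) reduce to short localization computations: (1) is immediate since $(q\cdot)$ is pointwise multiplication by $q$; for (2), both sides localize at $t_\nu v$ to $\psi(t_{\nu-\mu})$; and for (4), $vw\in\Wf$ whenever $w\in\Wf$, so $(w\bullet\theta\psi)(t_\mu v) = (\theta\psi)(t_\mu vw) = \psi(t_\mu) = (\theta\psi)(t_\mu v)$. The substantive calculation is (3): unwinding $w\odot = (w\cdot)\circ(w\bullet)$ produces $(w\odot\psi)(b) = w\cdot\psi(w^{-1}bw)$, and localizing the left-hand side at $t_\mu v$ gives $w\cdot(\theta\psi)(w^{-1}t_\mu v) = w\cdot(\theta\psi)\bigl(t_{w^{-1}\mu}(w^{-1}v)\bigr) = w\cdot\psi(t_{w^{-1}\mu})$, while the right-hand side gives $(w\odot\psi)(t_\mu) = w\cdot\psi(w^{-1}t_\mu w) = w\cdot\psi(t_{w^{-1}\mu})$, matching as required.

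The concluding remark, that $M := \theta(K^*_T(\flags)) = p^*(K^*_T(\gr))$ is a $\bK$-submodule under $\cdot$, is the subtlest step, since $\bK$ is not generated as an $R(T)$-algebra by $R(T)$ and $\Wa$ alone: the generator $T_i = (1-e^{\alpha_i})^{-1}(s_i-1)$ carries a denominator. My plan is to pass through $\bK_{Q(T)} = Q(T)\rtimes\Wa$. From (1) and (2), $M$ is stable under $R(T)$ and the translations $t_\mu$; from (3), $w\cdot\theta\psi = \theta(w\odot\psi)\in M$ for $w\in\Wf$, so $M$ is stable under all of $\Wa$. Consequently $M\otimes_{R(T)}Q(T)$ is a $\bK_{Q(T)}$-submodule of $K^*_T(\flags)\otimes_{R(T)}Q(T)$, and in particular a $\bK$-submodule. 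For $a\in\bK$ and $\psi\in M$ we then have $a\cdot\psi\in K^*_T(\flags)\cap\bigl(M\otimes_{R(T)}Q(T)\bigr)$. The main obstacle is identifying this intersection with $M$; this should follow because $M$ consists of precisely those elements of $K^*_T(\flags)$ that are constant on right $\Wf$-cosets, and that constancy is inherited from $M\otimes_{R(T)}Q(T)$ while $R(T)$-valuedness is inherited from $K^*_T(\flags)$.
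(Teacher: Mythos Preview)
Your proof is correct. The paper states Proposition~\ref{P:theta} without proof, and your localization argument via Lemma~\ref{L:wrongwayloc} is exactly the intended verification; the computations for (1)--(4) are accurate, including the conjugation identity $w^{-1}t_\mu w = t_{w^{-1}\mu}$ underlying (3).

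For the concluding submodule claim, your route through $\bK_{Q(T)}$ works, but it can be shortened. The key point is that ``constant on right $\Wf$-cosets'' is visibly preserved by $s_i\cdot$ for every $i\in\Iaf$ (since $(s_i\cdot\psi)(xv)=s_i\cdot\psi(s_i^{-1}xv)$ depends only on the coset of $x$), hence by $T_i\cdot = (1-e^{\alpha_i})^{-1}(s_i\cdot -\,1)$ as a $Q(T)$-valued function; but $T_i\cdot\psi$ already lies in $K^*_T(\flags)$, so it is $R(T)$-valued and therefore in $M$ by the characterization of $p^*(K^*_T(\gr))$. This avoids explicitly forming $M\otimes_{R(T)}Q(T)$ and checking the intersection, though your version is logically equivalent.
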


\begin{prop}\label{P:eta}
The map $\eta$ interacts with the two actions $\cdot$ and $\bullet$ of $\bK$ on $K^*_T(\flags)$ in the following way:
\begin{enumerate}
\item
$(q \cdot) \circ \eta = \eta \circ (q\cdot) $
\item
$(t_\mu \cdot) \circ \eta = \eta $
\item
$(w \cdot) \circ \eta = \eta \circ (w \cdot)$
\item
$(q \bullet) \circ \eta = \eta \circ (q\bullet)$
\item
$(t_\mu \bullet) \circ \eta = \eta$
\item
$(w \bullet) \circ \eta = \eta \circ (w\bullet)$ 
\end{enumerate}
where $q \in R(T)$, $w \in \Wf$, and $\mu\in Q^\vee$.  By (1)-(6) we see that $\eta(K^*_T(\flags)) = \ev_1^*(K^*_T(G/B))$ is a $\bK$-submodule of $K^*_T(\flags)$ under either the $\cdot$ or the $\bullet$ action.
\end{prop}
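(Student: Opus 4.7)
The plan is to verify each of the six identities by localizing both sides at the $\TR$-fixed points $\{t_\mu v : \mu \in Q^\vee,\ v \in \Wf\}$ of $\flags$; the injection $K^*_T(\flags)\hookrightarrow \F(\Wa,R(T))$ reduces equality of classes to equality of all localizations. The essential input is Lemma \ref{L:evoneloc}, which collapses $(\eta\psi)(t_\mu v)$ to $\psi(v)$ independently of the translation $\mu$. The approach is parallel to what is needed for Proposition \ref{P:theta}, and ultimately each identity becomes a short computation in the smash product $\bK_{Q(T)}$.

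Identities (1)--(3) follow by expanding the definitions $(q\cdot\psi)(b) = q\psi(b)$ and $(w\cdot\psi)(b) = w\psi(w^{-1}b)$. The substantive inputs are: $t_\mu$ acts trivially on $R(T)$ via the level-zero action, giving (2); and for $w\in\Wf$ one has $w^{-1}(t_\mu v) = t_{w^{-1}\mu}(w^{-1}v)$, whose translation part is discarded by $\eta$, giving (3). Identities (4)--(6) use $(a\bullet\psi)(b) = \psi(ba)$ together with the smash-product commutation $vf = (v\cdot f)\,v$ for $v\in\Wa$, $f\in R(T)$, and the elementary identities $t_\nu v\cdot t_\mu = t_{\nu+v\mu}v$ and $t_\nu v\cdot w = t_\nu(vw)$ for $w\in\Wf$. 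In each case both sides evaluate at $t_\mu v$ to the same expression in $R(T)$: $(v\cdot q)\psi(v)$ for (4), $\psi(v)$ for (5), and $\psi(vw)$ for (6).

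For the submodule claim, combining (2) with (3) gives $(w\cdot)\circ\eta = (t_\mu\cdot)\circ(v\cdot)\circ\eta = \eta\circ(\cl(w)\cdot)$ for any $w = t_\mu v\in\Wa$; similarly, (5) and (6) combine to give $(w\bullet)\circ\eta = \eta\circ(\cl(w)\bullet)$. Together with (1) and (4), this shows that the $R(T)[\Wa]$-generated part of $\bK_{Q(T)}$ sends $\eta(K^*_T(\flags))$ into itself under either action. To pass to the generators $T_i$, one writes $T_i = (1-e^{\al_i})^{-1}(s_i-1)$ in $\bK_{Q(T)}$ and obtains, in the localization, $(T_i\cdot)\circ\eta = \eta\circ U_i$ where $U_i = (1-e^{\al_i})^{-1}(\cl(s_i)-1)$; applying $\eta^2=\eta$ from Lemma \ref{L:endo} shows that $T_i\cdot\eta\psi$ already lies in $\eta(K^*_T(\flags))$. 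The analogous argument using (4)--(6) settles the $\bullet$ action.

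The main obstacle is the affine generator $T_0$: its reduction requires the factorization $s_0 = t_{\theta^\vee}s_\theta$ and careful bookkeeping with the level-zero action, so that $T_0\cdot$ restricted to $\eta(K^*_T(\flags))$ corresponds to the finite divided difference attached to $\cl(s_0) = s_\theta$. Once this reduction is in place, the submodule closure becomes transparent via the well-established action of the finite nilHecke algebra on $K^*_T(G/B)$.
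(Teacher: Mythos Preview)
Your proposal is correct and matches the paper's implicit approach: the paper states Proposition~\ref{P:eta} without proof, relying on the same localization machinery (Lemma~\ref{L:evoneloc} together with the level-zero action and the factorization $s_0=t_{\theta^\vee}s_\theta$) that you invoke, exactly as in the one-line proof of Lemma~\ref{L:endo}. Your use of $\eta^2=\eta$ to deduce the submodule closure from $(T_i\,\cdot)\circ\eta=\eta\circ U_i$ is the clean way to finish, and your identification $U_0=T_{-\theta}=\cl_{\bK}(T_0)$ is precisely the content of \eqref{E:clbA}.
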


\begin{prop}\label{P:kappa}
The map $\kappa$ interacts with the two actions $\cdot$ and $\bullet$ of $\bK$ on $K^*_T(\flags)$ in the following way:
\begin{enumerate}
\item
$(q \cdot) \circ \kappa = \kappa \circ (q\cdot)$
\item
$(t_\mu \cdot) \circ \kappa = \kappa$
\item
$(w \cdot) \circ \kappa = \kappa \circ (w \odot)$
\item
$(t_\mu \bullet) \circ \kappa = \kappa$
\item
$(w \bullet) \circ \kappa = \kappa$
\end{enumerate}
where $q \in R(T)$, $w \in \Wf$, and $\mu\in Q^\vee$.
\end{prop}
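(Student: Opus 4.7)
The plan is to verify all five identities pointwise, by localizing at the $\TR$-fixed points $t_\mu v$ of $\flags$ (indexed by $\mu \in Q^\vee$, $v \in \Wf$) and applying Lemma \ref{L:kappaloc} together with the explicit formulas for the $\cdot$ and $\bullet$ actions. The first observation to record is that since $\Wa = Q^\vee \rtimes \Wf$, every element of $\Wa$ has the form $t_\mu v$; thus Lemma \ref{L:kappaloc} says that $\kappa(\psi)$ is the constant function on $\Wa$ taking the value $\psi(\id)$. This constancy property makes all five statements essentially mechanical.

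The identities (1), (4), and (5) are then almost immediate. For (1), scalar multiplication by $q \in R(T)$ commutes with evaluation at any point. For (4) and (5), the definition $(a \bullet \psi)(b) = \psi(ba)$ shows that right multiplication by $a = t_\mu$ or $a = w \in \Wf$ sends the element $t_\mu v \in \Wa$ to another element of the same form, so $(a \bullet \kappa(\psi))(t_\mu v) = \kappa(\psi)(t_\mu v \cdot a) = \psi(\id)$. Identity (2) additionally uses that the level-zero action of $t_\mu$ on $R(T)$ is trivial: localizing gives $(t_\mu \cdot \kappa(\psi))(t_\nu v) = t_\mu \cdot \kappa(\psi)(t_{-\mu}t_\nu v) = t_\mu \cdot \psi(\id) = \psi(\id)$.

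The only identity requiring any thought is (3), which is where the twist $\odot$ (rather than simply $\cdot$) appears. Localizing the left side at $t_\mu v$ gives $(w \cdot \kappa(\psi))(t_\mu v) = w \cdot \kappa(\psi)(w^{-1} t_\mu v) = w \cdot \psi(\id)$, where in the inner evaluation one uses that $w^{-1}t_\mu v = t_{w^{-1}\mu} w^{-1}v$ is still in the canonical form $t_{\mu'}v'$. For the right side, $\kappa((w \odot) \psi)(t_\mu v) = ((w \odot)\psi)(\id)$, and unfolding $w \odot = (w \cdot)\circ(w \bullet)$ yields $w \cdot \psi(w^{-1} \cdot \id \cdot w) = w \cdot \psi(\id)$. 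So the two sides agree. Conceptually, the $\bullet$ factor inside $\odot$ is precisely what is needed to undo the shift $b \mapsto w^{-1}b$ that the $\cdot$ action introduces inside the argument, so that evaluation at the basepoint $\id$ is preserved. This is the only delicate point, and once the constancy of $\kappa(\psi)$ is in hand there is no serious obstacle.
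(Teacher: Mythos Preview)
Your proof is correct and follows exactly the localization approach that the paper intends: the paper itself omits the proof of this proposition, but by analogy with the one-line proof of Lemma~\ref{L:endo} (``Straightforward from Lemmas~\ref{L:wrongwayloc}, \ref{L:evoneloc}, and \ref{L:kappaloc}''), the expected argument is precisely the pointwise verification via Lemma~\ref{L:kappaloc} that you carry out. Your observation that $\kappa(\psi)$ is the constant function with value $\psi(\id)$, and your unwinding of $w\odot$ at the identity to explain why (3) needs the extra $\bullet$-twist, are exactly right.
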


\subsection{Action of $\bK$ on tensor products}
Define $\bK_{Q(T)} \otimes_{Q(T)} \bK_{Q(T)}$ to be the left $Q(T)$-bilinear tensor product such that
\begin{align}
	q (a \otimes b) = qa \otimes b = a \otimes q b
\end{align}
for all $a,b\in\bK_{Q(T)}$ and $q\in Q(T)$. Define $\Delta: \bK_{Q(T)}\to \bK_{Q(T)} \otimes_{Q(T)} \bK_{Q(T)}$ by
\begin{align}\label{E:Delta}
	\Delta(\sum_{w\in\Wa} a_w w) = \sum_w a_w w\otimes w
\end{align}
for $a_w\in Q(T)$.
Then for all $i\in\Iaf$ we have
\begin{align}\label{E:deltaA}
	\Delta(T_i) &= T_i \otimes 1 + 1 \otimes T_i + (1-e^{\al_i})T_i \otimes T_i.
\end{align}
This restricts to a left $R(T)$-bilinear tensor product $\Delta: \bK \to \bK\otimes_{R(T)} \bK$.
If $M$ and $N$ are left $\bK$-modules then $M \otimes_{R(T)} N$ is a left $\bK$-module via 
\begin{align}\label{E:tensordef}
	a(m \otimes n) &= \sum_{(a)} a_{(1)}(m) \otimes a_{(2)}(n)
\end{align}
for all $a\in\bK$, $m\in M$ and $n\in N$.

\begin{lem}\label{L:cupproduct}
For $\psi_1,\psi_2 \in K^*_T(\flags)$ and $a \in \bK$, we have
\begin{align*}
a\cdot(\psi_1 \cup \psi_2) &= \sum_{(a)} (a_{(1)} \cdot \psi_1) \cup (a_{(2)} \cdot \psi_{(2)}) \\
a\bullet(\psi_1 \cup \psi_2) &= \sum_{(a)} (a_{(1)} \bullet \psi_1) \cup (a_{(2)} \bullet \psi_{(2)}).
\end{align*}
\end{lem}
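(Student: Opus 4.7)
The plan is to verify both identities pointwise via the localization injection $K^*_T(\flags) \hookrightarrow \F(\Wa, R(T))$, under which the cup product $\cup$ becomes pointwise multiplication. Since the coproduct $\Delta$ of \eqref{E:Delta} is determined by $\Delta(w) = w \otimes w$ for $w \in \Wa$ and extended $Q(T)$-bilinearly, and any $a \in \bK \subset \bK_{Q(T)} = \bigoplus_{w\in\Wa} Q(T)w$ is a finite $Q(T)$-linear combination of such group-like elements, it suffices to verify each identity for $a = w \in \Wa$. The reduction to this case uses the $Q(T)$-bilinearity of the tensor product, which in particular lets us freely move the scalar coefficients between the two tensor factors.

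For the $\bullet$-identity with $a = w$, I directly compute at each $u \in \Wa$:
\[
(w \bullet (\psi_1 \cup \psi_2))(u) = (\psi_1 \cup \psi_2)(uw) = \psi_1(uw)\,\psi_2(uw) = (w\bullet\psi_1)(u)\cdot(w\bullet\psi_2)(u),
\]
which matches the RHS since $\Delta(w) = w\otimes w$. For the $\cdot$-identity with $a = w$, I use that the level-zero action of $w \in \Wa$ on $R(T)$ is by ring automorphisms, giving
\[
(w\cdot(\psi_1 \cup \psi_2))(u) = w\cdot\bigl(\psi_1(w^{-1}u)\,\psi_2(w^{-1}u)\bigr) = (w\cdot\psi_1)(u)\,(w\cdot\psi_2)(u),
\]
again matching $\Delta(w) = w\otimes w$.

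The principal subtlety is making precise the reduction from the group-like case to arbitrary $a \in \bK$. One writes $a = \sum_w a_w w$ as a finite sum with $a_w \in Q(T)$, extends both actions to $\bK_{Q(T)}$ acting on $\F(\Wa, Q(T))$, and verifies via direct localization that $(a_w w \bullet \psi)(u) = (u\cdot a_w)\,\psi(uw)$ and $(a_w w \cdot \psi)(u) = a_w\,(w\cdot\psi)(u)$ assemble into pointwise identities summing to the desired equalities. The resulting equality lies in $K^*_T(\flags) \subset \F(\Wa, R(T))$ because both sides are \emph{a priori} in $K^*_T(\flags)$. An alternative, more computational route would verify the $\cdot$-identity directly for $a = T_i$ using the Leibniz rule \eqref{E:deriv} and the explicit coproduct \eqref{E:deltaA}, then propagate to general $T_w$ using that $\Delta$ is a homomorphism, but the group-like reduction is considerably cleaner.
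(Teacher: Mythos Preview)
Your proof is correct and follows essentially the same approach as the paper: both verify the group-like case $a=w$ by direct localization (identical computations), and then extend to general $a$ by $Q(T)$-linearity. The only cosmetic difference is that the paper phrases the extension step as checking compatibility with left multiplication by $e^\lambda$ (invoking Lemma~\ref{L:Xbullet} for the $\bullet$-case), whereas you write $a=\sum_w a_w w$ and carry out the localization directly; these amount to the same verification.
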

\begin{proof}
We have
\begin{align*}
w\cdot(\psi_1 \cup \psi_2)(x) &= w(\psi_1(w^{-1} x) \psi_2(w^{-1}(x)))
= ((w \cdot \psi_1) \cup (w \cdot \psi_2))(x) \\
w\bullet(\psi_1 \cup \psi_2)(x) &= \psi_1(xw) \psi_2(xw) =  ((w \bullet \psi_1) \cup (w \bullet \psi_2))(x),
\end{align*}
consistent with $\Delta(w) = w \otimes w$.  Next, we check that the formulae are compatible with $R(T)$-linearity. It is enough to work with the algebra generators $e^\la$ of $R(T)$. We have $\Delta(e^\la w) = e^\la w \otimes w$ and
\begin{align*}
	(e^\la w) \cdot (\psi_1\cup \psi_2) &=e^ \la \cdot ( w \cdot (\psi_1\cup\psi_2))
	= e^\la \cdot ((w\cdot \psi_1)\cup (w\cdot \psi_2)) 
	= ((e^\la w) \cdot \psi_1)\cup (w\cdot \psi_2).
\end{align*}
Using Lemma \ref{L:Xbullet} we have
\begin{align*}
	(e^\la w) \bullet (\psi_1\cup \psi_2) &= 
	e^\la \bullet (w \bullet (\psi_1\cup\psi_2)) 
	=[\cL_\la] \cup (w \bullet \psi_1) \cup (w\bullet \psi_2) 
	= ((e^\la w)\bullet \psi_1 )\cup (w\bullet \psi_2).
\end{align*}
\end{proof}


\subsection{Finite nilHecke algebra}
The finite nilHecke ring $\bKfin$ is the subring of $\bK$ generated by $R(T)$ and $T_i$ for $i\in \Ifin$. 
There are left actions $\cdot$ and $\bullet$ of $\bKfin$ on $K_T^*(G/B)$ that are similarly to the actions of $\bK$ on $K_T^*(\flags)$.

There is a $\bKfin$-$\bKfin$-bimodule and ring homomorphism $\cl_{\bK}: \bK \to \bKfin$ defined (for convenience from $\bK_{Q(T)}\to \bKfin_{Q(T)}$) by
\begin{align}\label{E:clbA}
	\cl_{\bK}(t_\mu a) = a \qquad\text {for $\mu\in Q^\vee$ and $a\in\bKfin$.}
\end{align}
In particular,
\begin{align*}
	\cl_{\bK}(T_0) &= \cl_{\bK}((1-e^{-\theta})^{-1}(s_0 - 1)) 
	= \cl_{\bK}((1-e^{-\theta})^{-1}(t_{\theta^\vee} s_\theta-1)) 
	=(1-e^{-\theta})^{-1}(s_\theta - 1) =: T_{-\theta}.
\end{align*}
Thus we have $\cdot$ and $\bullet$ actions of $\bK$ on $K^*_T(G/B)$ that factor through $\cl_{\bK}: \bK \to \bKfin$.

\subsection{Tensor product decomposition of $K^*_T(\flags)$}
The equivariant $K$-theory ring $K_T^*(\gr)$ is a left $\bK$-submodule of $K_T^*(\flags)$ under the $\cdot$-action.  Thinking of $\psi_\gr \in K_T^*(\gr)$ as a function from cosets $\Wa/\Wf$ to $R(T)$, we have $(w \cdot \psi_\gr)(x\Wf) = 
w(\psi_\gr(w^{-1}x  \Wf))$.

The left $\bK$-module structures via $\cdot$ on $K^*_T(\gr)$ and $K^*_T(G/B)$ give a left $\bK$-module structure on $K_T^*(\gr) \otimes_{R(T)} K_T^*(G/B)$ via \eqref{E:tensordef}.
\begin{thm}\label{T:tensor}
There is an $R(T)$-algebra isomorphism
\begin{align}
\label{E:tensormap}
  K_T^*(\gr) \otimes_{R(T)} K_T^*(G/B) &\cong K_T^*(\flags) \\
\label{E:tensormaponelements}
  a \otimes b& \mapsto p^*(a) \cup \ev_1^*(b)
\end{align}
with componentwise multiplication on the tensor product. This map is also an isomorphism of left $\bK$-modules under the $\cdot$ action.
\end{thm}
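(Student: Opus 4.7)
My plan is to construct an explicit two-sided inverse to the map $\mu(a\otimes b):=p^*(a)\cup \ev_1^*(b)$. Well-definedness and the $R(T)$-algebra property are immediate: both $p^*$ and $\ev_1^*$ are $R(T)$-algebra maps, and the multiplication on the target is componentwise. Let $q^*:K_T^*(\flags)\to K_T^*(G/B)$ denote pullback along $q:K/\TR\hookrightarrow LK/\TR$, so that $\eta=\ev_1^*\circ q^*$, and set
$$
\nu:=(1\otimes q^*)\circ \Delta\,:\,K_T^*(\flags)\longrightarrow K_T^*(\gr)\otimes_{R(T)} K_T^*(G/B).
$$
The identity $\mu\circ\nu=\id$ is then literally the assertion $\cup\circ(1\otimes\eta)\circ\Delta=1$ of Proposition \ref{P:coprodbulletK}.

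For $\nu\circ\mu=\id$, I would rely on two geometric observations. First, since every $\gamma\in\Omega K$ satisfies $\gamma(1)=1$, the action map $m:\Omega K\times LK/\TR\to LK/\TR$ satisfies $\ev_1\circ m=\ev_1\circ\pi_2$, which upon pullback gives $\Delta(\ev_1^*(b))=1\otimes\ev_1^*(b)$ for every $b\in K_T^*(G/B)$. Second, $p\circ q:K/\TR\to\Omega K$ is constant at the basepoint, so $q^*\circ p^*:K_T^*(\gr)\to K_T^*(G/B)$ factors as the counit $\epsilon:K_T^*(\gr)\to R(T)$ (localization at the basepoint of $\gr$) followed by the unit $R(T)\hookrightarrow K_T^*(G/B)$. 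Since $\Delta$ is an $R(T)$-algebra homomorphism and $\Delta|_{K_T^*(\gr)}$ equals the $R(T)$-Hopf algebra coproduct $a\mapsto \sum_{(a)} a_{(1)}\otimes a_{(2)}$ of $K_T^*(\gr)$, I obtain
$$
\Delta(p^*(a)\cup \ev_1^*(b))=\sum_{(a)} a_{(1)}\otimes\bigl(p^*(a_{(2)})\cup \ev_1^*(b)\bigr).
$$
Applying $1\otimes q^*$ and using $q^*\circ\ev_1^*=\id$ (from $\ev_1\circ q=\id_{K/\TR}$) together with $q^*\circ p^*=\epsilon$ yields $\nu(\mu(a\otimes b))=\sum_{(a)} a_{(1)}\,\epsilon(a_{(2)})\otimes b=a\otimes b$ by the counit axiom.

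For the $\bK$-module statement under the $\cdot$-action, Lemma \ref{L:cupproduct} together with \eqref{E:Delta} gives $c\cdot(p^*(\xi_1)\cup \ev_1^*(\xi_2))=\sum_{(c)}(c_{(1)}\cdot p^*(\xi_1))\cup(c_{(2)}\cdot \ev_1^*(\xi_2))$ for any $c\in\bK$, which matches the tensor-product action \eqref{E:tensordef} once we know that $p^*$ and $\ev_1^*$ intertwine the respective $\cdot$-actions. For $p^*$ this is by definition of the $\bK$-submodule $K_T^*(\gr)\subset K_T^*(\flags)$; for $\ev_1^*$ it follows from Proposition \ref{P:eta}(1)--(3), which together imply that the $\cdot$-action on $\mathrm{im}(\eta)=\ev_1^*(K_T^*(G/B))$ factors through $\cl_{\bK}:\bK\to\bKfin$ and agrees with the $\bKfin$-action on $K_T^*(G/B)$ pulled back by $\ev_1^*$. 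The main obstacle is the careful verification of the two geometric identities $\Delta(\ev_1^*(b))=1\otimes\ev_1^*(b)$ and $q^*\circ p^*=\epsilon$, both of which reduce to elementary observations about based loops and constant loops.
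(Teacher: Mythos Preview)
Your argument is correct, and it takes a genuinely different route from the paper's. The paper deduces Theorem~\ref{T:tensor} \emph{after} establishing the Schubert coproduct formula (Theorem~\ref{T:affineschub}): it observes that the images $p^*(G^u)\cup\ev_1^*(\psi^v_{G/B})$ for $(u,v)\in\Wz\times\Wf$ are unitriangular against the Schubert basis $\{\psi^{uv}\}$ of $K_T^*(\flags)$, and bijectivity follows. Your proof is basis-free: you produce an explicit two-sided inverse $\nu=(1\otimes q^*)\circ\Delta$, verify $\mu\circ\nu=\id$ directly from Proposition~\ref{P:coprodbulletK}, and verify $\nu\circ\mu=\id$ from the Hopf-algebraic counit axiom together with the two elementary geometric facts $\ev_1\circ m=\ev_1\circ\pi_2$ and $p\circ q=\text{const}$. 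What your approach buys is independence from the Schubert calculus: you never need Theorem~\ref{T:affineschub} or even the existence of a Schubert basis, only that $K_T^*(\gr)$ is an $R(T)$-Hopf algebra and that $\Delta$ is multiplicative (both of which hold because $\Delta$ is pullback along the action map). What the paper's approach buys is economy within its own narrative---the unitriangularity is an immediate corollary of the paper's main formula, so no additional geometric input is needed. For the $\bK$-module compatibility, both arguments are essentially the same, invoking Lemma~\ref{L:cupproduct} and the $(\bK\,\cdot)$-equivariance of $p^*$ and $\ev_1^*$.
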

The proof is delayed to after Theorem \ref{T:affineschub}.  

\section{Affine Schubert classes}\label{sec:Schubert}
\subsection{Schubert bases}\label{ssec:Schubert}
The $R(T)$-algebras $K^*_T(\flags)$, $K^*_T(\gr)$, and $K^*_T(G/B)$ have equivariant Schubert bases $\{\psi^x \mid x \in \Wa\}$, $\{\psi^u_{\gr} \mid u \in \Wz\}$, and $\{\psi^w_{G/B} \mid w \in \Wf\}$ respectively.   The basis $\{\psi^x \mid x \in \Wa\} \subset K^*_T(\flags)$ is uniquely characterized by
\begin{equation} \label{E:psidef}
 \psi^v(T_w) = \delta_{v,w}.
\end{equation}

We have
\begin{align}
\label{E:pschub}
	p^*(\psi^z_{\thickaffgr}) &= \psi^z\qquad\text {for all $z\in\Wz$,}\\
\label{E:qschub}
	q^*(\psi^x) &= \begin{cases}\psi^x_{G/B} & \text {for $x\in\Wf$,} \\
0 &  \text {for $x\in\Wa \setminus \Wf$.}
\end{cases}
\end{align}
In particular $\eta(\psi^x)=\ev_1^*(\psi^x_{G/B})$ for $x\in\Wf$.

Similarly, let $\{\psi_x \mid x \in \Wa\}$, $\{\psi_u^{\gr} \mid u \in \Wz\}$, and $\{\psi_w^{G/B} \mid w \in \Wf\}$ denote homology Schubert bases of $K_*^T(\flags)$, $K_*^T(\gr)$, and $K_*^T(G/B)$.  We write $\pair{\cdot}{\cdot}_{\flags}$, $\pair{\cdot}{\cdot}_{\gr}$, and $\pair{\cdot}{\cdot}_{G/B}$ for the $R(T)$-bilinear pairings between $T$-equivariant $K$-homology and $K$-cohomology, so that for example $\pair{\psi_x}{\psi^y}_{\flags} = \delta_{xy}$.  For the precise geometric interpretations of $\psi^x$ and $\psi_x$ we refer the reader to \cite[\S3]{LLMS}.

\begin{rem}  \label{R:pproj}
The map $p^*$ is an isomorphism of $K_T^*(\thickaffgr)$ with its image
$\bigoplus_{u\in\Wz} R(T) \psi^u$, whose elements are $\Wf \bullet$-invariant by Proposition \ref{P:theta}.
\end{rem}

The localization values of Schubert classes are determined by the following triangular relation. For all $w\in\Wa$, in $\bK$ we have \cite{KK} \cite[Proposition 2.4]{LSS}
\begin{align}\label{E:Schubloc}
	w = \sum_{v\le w} \pair{w}{\psi^v} T_v.
\end{align}

The Schubert basis $\{\psi^w \mid w \in \Wa\}$ interacts with the $\cdot$ and $\bullet$ actions of $\bK$ as follows.   For $i \in \Iaf$, define 
\begin{align}
y_i&:= 1 + T_i = \frac{1}{1-e^{-\alpha_i}}(1-e^{-\alpha_i}s_i) & \ty_i &:= 1-e^{\alpha_i}T_i = \frac{1}{1-e^{\alpha_i}}(1-e^{\alpha_i} s_i).
\end{align}

\begin{prop}\label{P:dotbullet} 
For $\la\in \Xfin$ and $T_i$ for $i\in\Iaf$, on the Schubert basis element $\psi^w \in K_T^*(\flags)$ for $w\in\Wa$, we have:
\begin{align}
	\label{E:Aleft}
	\ty_i \cdot \psi^w &= 
	\begin{cases}
		\psi^{s_iw}& \text {if $s_iw<w$} \\
		\psi^{w} & \text {otherwise.}
	\end{cases} \\
	\label{E:Aright}
	y_i \bullet \psi^w &= 
	\begin{cases}
		\psi^{ws_i} & \text {if $ws_i<w$} \\
		\psi^w & \text {otherwise.}
	\end{cases} \\
	\label{E:Sleft}
	e^\la \cdot \psi^w &= e^\la \psi^w \\
	\label{E:Sright}
	e^\la \bullet \psi^w &= [\cL_\la] \cup \psi^w  .
\end{align}
\end{prop}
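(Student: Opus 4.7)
The four formulas split into two that are essentially formal and two that need computation, so I would handle them in order of difficulty. Formula \eqref{E:Sleft} is immediate: the defining relation $(q\cdot\psi)(b)=q\,\psi(b)$ for $q\in R(T)$ says that the $\cdot$ action of $e^\lambda$ is precisely scalar multiplication in the $R(T)$-module $K^*_T(\flags)$. Formula \eqref{E:Sright} is exactly Lemma \ref{L:Xbullet}.

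For \eqref{E:Aright}, the plan is to exploit the duality $\langle b,a\bullet\psi\rangle=\langle ba,\psi\rangle$ of the $\bullet$ action with right multiplication in $\bK$. Testing $y_i=1+T_i$ against the dual Schubert basis gives
\[
\langle T_v,\; y_i\bullet\psi^w\rangle \;=\; \delta_{vw}+\langle T_vT_i,\psi^w\rangle,
\]
and using the standard product rule $T_vT_i=T_{vs_i}$ (when $vs_i>v$) or $-T_v$ (when $vs_i<v$), the right side collapses to either $\delta_{vw}+\delta_{vs_i,w}$ or $0$. A short length-comparison case analysis against $\delta_{v,ws_i}$ or $\delta_{vw}$ then finishes.

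Formula \eqref{E:Aleft} is the substantive one. Because the $\cdot$ action mixes right translation in $\Wa$ with the level-zero Weyl action on $R(T)$-coefficients, it is not the transpose of a pure multiplication in $\bK$, so pairing-duality alone will not suffice. My plan is to compute $(\ty_i\cdot\psi^w)(T_v)$ from first principles. Writing $\ty_i=1-e^{\alpha_i}T_i$ together with $(T_i\cdot\psi)(b)=T_i\cdot\psi(s_ib)+\psi(T_ib)$, and using the identity $s_i=1+(1-e^{\alpha_i})T_i$ in $\bK_{Q(T)}$ combined with $T_iT_v\in\{T_{s_iv},-T_v\}$, one expresses both $\psi^w(s_iT_v)$ and $\psi^w(T_iT_v)$ as explicit $R(T)$-linear combinations of $\delta_{vw}$ and $\delta_{s_iv,w}$. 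Since the Kronecker values lie in $\bZ\subset R(T)$, on which $T_i$ acts trivially, the only nontrivial $T_i$-application is to the scalar $(1-e^{\alpha_i})$; evaluating this via the $\bK_0$-action on $R(T)$ and then multiplying through by $-e^{\alpha_i}$, the $e^{\pm\alpha_i}$ factors cancel to yield
\[
\langle T_v,\;\ty_i\cdot\psi^w\rangle \;=\; \begin{cases}\delta_{vw}+\delta_{s_iv,w} & \text{if } s_iv>v,\\ 0 & \text{if } s_iv<v.\end{cases}
\]
A length-comparison case analysis identifies this with $\delta_{v,s_iw}$ (when $s_iw<w$) or $\delta_{vw}$ (otherwise), proving \eqref{E:Aleft}.

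The main obstacle is the algebraic cancellation in \eqref{E:Aleft}: several $e^{\pm\alpha_i}$ factors and the $T_i$-action on $R(T)$ appear simultaneously, and one has to track that the operator $T_i$ passes transparently through the integer coefficients $\delta_{vw}$ and $\delta_{s_iv,w}$ while collapsing the single nontrivial $R(T)$-scalar $(1-e^{\alpha_i})$ into exactly what is needed. A rank-one sanity check (where $\Wa=\{e,s_i\}$ and only two torus-fixed points exist) displays the full cancellation in a few lines and makes the general case routine.
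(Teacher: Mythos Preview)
Your proposal is correct and follows essentially the same route as the paper. The paper's own proof is extremely terse: it cites \cite[Lemma~2.2]{LSS} for \eqref{E:Aright}, says \eqref{E:Aleft} ``has a similar proof'', and dispatches \eqref{E:Sleft} and \eqref{E:Sright} exactly as you do (definition and Lemma~\ref{L:Xbullet}); your argument simply fills in the duality computation that \cite{LSS} contains and then carries out the analogous but more delicate localization computation for $\ty_i\cdot$ that the paper leaves implicit. One cosmetic remark: your ``rank-one sanity check'' phrasing $\Wa=\{e,s_i\}$ is misleading since $\Wa$ is infinite---what you presumably mean is restricting attention to the parabolic subgroup $\langle s_i\rangle$, which is harmless as a heuristic but should be reworded.
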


\begin{proof}
\eqref{E:Aright} is \cite[Lemma 2.2]{LSS}. Equation \eqref{E:Aleft} has a straightforward proof starting with $\pair{T_v}{\tilde{y}_i \cdot \psi^w}$ and using \eqref{eq:Tonfunc} and the duality of the two bases $\{T_v\}$ with $\{ \psi^w\}$. 

Equation \eqref{E:Sleft} follows from the definition and \eqref{E:Sright} follows from \eqref{E:Xbullet}.
\end{proof}

\subsection{Equivariant affine $K$-Stanley classes} 
Theorem \ref{T:centralizer} interacts with Schubert classes as follows.

\begin{thm}\label{T:centralizerSchub} \cite[Theorem 5.4]{LSS}
For every $u\in \Wz$, $k_u := \tocentralizer(\psi_u^{\gr})$ is the unique element of $\LL$ of the form 
\begin{align}\label{E:jbasis}
k_u = \sum_{z\in \Wa} k^z_u T_z
\end{align}
for some $k^z_u\in R(T)$, where 
\begin{align}
k^z_u=\delta_{z,u}\qquad\text{for $z\in\Wz$.}
\end{align}
\end{thm}

\begin{rem}\label{R:clAj} 
	It follows from Theorem \ref{T:centralizer} that
	\begin{align}
		t_\mu = \sum_{u\in \Wz} \pair{t_\mu}{\psi^u} k_u.
	\end{align}
Taking $\cl_{\bK}$ of both sides and using \eqref{E:clbA}, we have
\begin{align}
	T_\id &= \sum_{u\in \Wz} \pair{t_\mu}{\psi^u} \cl_{\bK}(k_u).
\end{align}
Now $\pair{t_\mu}{\psi^u}$ is zero unless $ u\le t_\mu$,
which by the assumption $u \in \Wz$ is equivalent to
$u\Wf \le t_\mu \Wf$. Since both the $t_\mu$ and the $k_u$
are $Q(T)$-bases of $\mathbb{L}$ it follows that $\pair{t_\mu}{\psi^u}\ne0$
for $\mu\in Q^\vee$ and $u\in \Wz$ such that $t_\mu \Wf=u\Wf$. It follows by induction that
\begin{align}
	\label{E:clAj}
	\cl_{\bK}(k_x) &= \delta_{\id,x} T_{\id}\qquad\text{for all $x\in \Wz$.}
\end{align}
\end{rem}

For $w\in\Wa$ the
equivariant affine $K$-Stanley class $G^w \in K_T^*(\gr)$ is defined by
\begin{align}\label{E:affstan}
	  G^w := \varphi^*(\psi^w).
\end{align}
We will also consider $G^w$ an element of $K_T^*(\flags)$ via $p^*$. 

\begin{lem}\label{L:Fcoeffs} For $w\in \Wa$, we have
\begin{align}\label{E:Fcoeffs}
	G^w &= \sum_{u\in \Wz} k^w_u \,\psi^u_{\gr}
\end{align}
where the $k^w_u$ are defined in Theorem \ref{T:centralizerSchub}.
\end{lem}
\begin{proof} 
For $u\in \Wz$, by \eqref{E:projectionformula} and Theorems \ref{T:centralizer} and \ref{T:centralizerSchub} we have
\begin{align*}
	\pair{\psi_u^{\gr}}{G^w}_{\thickaffgr} &= \pair{\psi_u^{\gr}}{\varphi^*(\psi^w)}_{\thickaffgr} 
	= \pair{\varphi_*(\psi_u^{\gr})}{\psi^w}_{\flags} 
	= \pair{\sum_{z\in \Wa} k^z_u T_z}{\psi^w} 
	=  k^w_u. \qedhere
\end{align*}
\end{proof}
Recall that $u*v$ denotes the Demazure product of $u$ and $v$.

\begin{prop}\label{P:coprod1} For $w\in \Wa$, we have
	\begin{align}\label{E:coprod1}
		\Delta(\psi^w) &= \sum_{w=w_1 * w_2 } (-1)^{\ell(w_1)+\ell(w_2)-\ell(w)} G^{w_1} \otimes \psi^{w_2}.
	\end{align}
\end{prop}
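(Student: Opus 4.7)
The plan is to exploit the duality between $\LL \otimes_{R(T)} \bK$ and $K_T^*(\gr) \otimes_{R(T)} K_T^*(\flags)$ encoded in Proposition \ref{P:coprod}. By Theorems \ref{T:centralizer} and \ref{T:centralizerSchub}, the elements $\{k_u \mid u \in \Wz\}$ form an $R(T)$-basis of $\LL$ dual to the Schubert basis $\{\psi^u_\gr\}$ of $K_T^*(\gr)$; by \eqref{E:psidef}, $\{T_v \mid v \in \Wa\}$ is an $R(T)$-basis of $\bK$ dual to $\{\psi^v\}$. Writing $\Delta(\psi^w) = \sum_{u\in\Wz,\,v\in\Wa} c^w_{u,v}\,\psi^u_\gr \otimes \psi^v$ and applying \eqref{E:coprod} with $a = k_u$, $b = T_v$ reduces the determination of $\Delta(\psi^w)$ to the pairings
$$c^w_{u,v} = \pair{k_u T_v}{\psi^w},$$
i.e.\ to expanding $k_u T_v$ in the $T$-basis of $\bK$.

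The key calculation is where the Demazure product enters. Using the expansion $k_u = \sum_{x \in \Wa} k^x_u T_x$ from Theorem \ref{T:centralizerSchub}, a short induction on a reduced word for $v$ via the right-multiplication rule ($T_y T_i = T_{y s_i}$ if $y s_i > y$ and $-T_y$ otherwise) yields the standard $0$-Hecke identity
$$T_x T_v \;=\; (-1)^{\ell(x) + \ell(v) - \ell(x * v)}\, T_{x * v}.$$
Substituting this into $k_u T_v$ and extracting the coefficient of $T_w$ gives
$$c^w_{u,v} \;=\; \sum_{x \,:\, x * v \,=\, w} (-1)^{\ell(x) + \ell(v) - \ell(w)}\, k^x_u.$$

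To conclude, I use Lemma \ref{L:Fcoeffs}, which says $G^x = \sum_{u \in \Wz} k^x_u \psi^u_\gr$. Summing the preceding identity over $u$ repackages the expansion of $\Delta(\psi^w)$ as
$$\Delta(\psi^w) \;=\; \sum_{v \in \Wa}\;\sum_{x\,:\, x * v \,=\, w} (-1)^{\ell(x) + \ell(v) - \ell(w)}\, G^x \otimes \psi^v,$$
which upon the relabeling $(x,v) = (w_1,w_2)$ is exactly the stated formula. There is no serious obstacle: the one conceptual step is recognizing that the Demazure product and the signs on the right-hand side come precisely from the $0$-Hecke identity $T_x T_v = (-1)^{\ell(x)+\ell(v)-\ell(x*v)} T_{x*v}$, and the rest is routine bookkeeping with the duality between the affine nilHecke algebra and the equivariant $K$-theory of $\flags$ and $\gr$.
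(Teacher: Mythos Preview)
Your proof is correct and follows essentially the same approach as the paper's: both compute the matrix of the multiplication map $\LL \otimes_{R(T)} \bK \to \bK$ in the bases $\{k_u \otimes T_v\}$ and $\{T_w\}$ via the $0$-Hecke identity $T_xT_v=(-1)^{\ell(x)+\ell(v)-\ell(x*v)}T_{x*v}$, and then pass to the dual map $\Delta$. You make the duality step explicit by invoking Proposition~\ref{P:coprod} and Lemma~\ref{L:Fcoeffs}, whereas the paper simply notes that $\Delta$ has the transposed matrix; the content is the same.
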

\begin{proof}
For $u\in\Wz$ and $v\in\Wa$, we have
\begin{align}\label{E:kuTv}
k_u T_v = \sum_{x\in\Wa} k^x_u T_x T_v
	= \sum_{w \in \Wa} \sum_{\substack{x \in \Wa \\ w = x*v}}
	(-1)^{\ell(x)+\ell(v) - \ell(w)} k^{x}_u T_w.
\end{align}
This gives a formula for the matrix of the multiplication map $\LL \otimes_{R(T)} \bK \to \bK$ with respect to the bases
$k_u \otimes T_v$ and $T_w$. 
The dual map $K_T^*(\flags)\overset{\Delta}{\longrightarrow} K_T^*(\thickaffgr) \otimes_{R(T)} K_T^*(\flags)$ has the transposed matrix of Schubert matrix coefficients.
That is, for all $w\in\Wa$, using Lemma \ref{L:Fcoeffs} we have
\begin{align*}
\Delta(\psi^w) &= 
\sum_{(u,v)\in\Wz\times\Wa} \sum_{\substack{x\in\Wa \\ w=x*v}} (-1)^{\ell(x)+\ell(v)-\ell(w)} k^x_u \psi_\gr^u \otimes \psi^v \\
&=\sum_{\substack{v,x\in\Wa \\ w=x*v}} (-1)^{\ell(x)+\ell(v)-\ell(w)} G^x \otimes \psi^v.
\end{align*}
\end{proof}

\subsection{Coproduct formula for affine Schubert classes}
The following formula decomposes $\psi^w$ according to the tensor product isomorphism of Theorem \ref{T:tensor}.

\begin{thm} \label{T:affineschub} For $w\in\Wa$, we have
	\begin{align}\label{E:affineschub}
		\psi^w &= \sum_{\substack{(w_1,w_2)\in \Wa\times\Wf \\ w_1*w_2=w}} (-1)^{\ell(w_1)+\ell(w_2) - \ell(w)} G^{w_1} \cup \ev_1^*(\psi^{w_2}_{G/B}) 
		\end{align}
\end{thm}

\begin{proof}
Apply Proposition \ref{P:coprodbulletK} with $a = 1$ and $\psi = \psi^w$, and use Proposition \ref{P:coprod1}.
\end{proof}

\noindent \begin{proof}[Proof of Theorem \ref{T:tensor}]
As $p^*$ and $\ev_1^*$ are $R(T)$-algebra homomorphisms, so is \eqref{E:tensormaponelements}. Note that for $u\in\Wz$, $G^u=\psi^u_{\thickaffgr}$.  To show that \eqref{E:tensormaponelements} is an isomorphism, it suffices to show that the image of the basis $\{ \psi^u_{\thickaffgr} \otimes \psi^v_{G/B} \mid (u,v)\in \Wz\times \Wf\}$ of $K_T^*(\thickaffgr)\otimes_{R(T)} K_T^*(G/B)$, namely, $\{G^u \cup \ev_1^*(\psi^v_{G/B})\mid (u,v)\in\Wz\times\Wf\}$, is an $R(T)$-basis of $K_T^*(\flags)$. But the latter collection of elements is unitriangular with the Schubert basis of $K_T^*(\flags)$, by Theorem \ref{T:affineschub}.  Thus \eqref{E:tensormaponelements} is a $R(T)$-algebra isomorphism.

Finally, \eqref{E:tensormap} is a left $(\bK\,\cdot)$-module homomorphism, due to Lemma \ref{L:cupproduct} and the fact that $\ev_1^*$ and $p^*$ are left $(\bK\,\cdot)$-module homomorphisms.
\end{proof}

\begin{cor} \label{C:affschubspecial} 
For $i\in\Iaf$, we have
\begin{align}\label{E:affschubreflection}
	\psi^{s_i} &= 
	\begin{cases}	
		G^{s_0} &\text {if $i=0$} \\
		G^{s_i} + \ev_1^*(\psi^{s_i}_{G/B}) - G^{s_i}\cup \ev_1^*(\psi^{s_i}_{G/B}) & \text {otherwise.}
	\end{cases}
\end{align}
\end{cor}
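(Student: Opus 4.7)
The plan is to specialize Theorem \ref{T:affineschub} to $w = s_i$ and enumerate the pairs $(w_1, w_2) \in \Wa \times \Wf$ with Demazure product $w_1 * w_2 = s_i$. The enumeration is short because $*$ is Bruhat-monotone in each slot: $w_1 * w_2 \geq w_1$ and $w_1 * w_2 \geq w_2$. Since only $\id$ and $s_i$ itself lie at or below $s_i$ in Bruhat order, each of $w_1$ and $w_2$ must belong to $\{\id, s_i\}$.

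For $i = 0$, the further constraint $w_2 \in \Wf$ rules out $w_2 = s_0$, so $w_2 = \id$ and then $w_1 = w_1 * \id = s_0$. Using $\psi^{\id}_{G/B} = 1$, the sum reduces to the single term $p^*(G^{s_0})$ with sign $+1$, giving the first line of the formula.

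For $i \in \Ifin$, I would inspect the four pairs in $\{\id, s_i\} \times \{\id, s_i\}$: the pair $(\id, \id)$ is excluded since $\id * \id = \id \neq s_i$, while $(\id, s_i)$, $(s_i, \id)$, and $(s_i, s_i)$ all have Demazure product $s_i$, with signs $(-1)^{\ell(w_1) + \ell(w_2) - 1}$ equal to $+1$, $+1$, and $-1$ respectively. Summing these three signed contributions and using $G^{\id} = 1$ and $\psi^{\id}_{G/B} = 1$ yields the second line. There is no genuine obstacle here; the corollary is a direct unpacking of Theorem \ref{T:affineschub} once one observes that the Bruhat-monotonicity of $*$ drastically restricts the sum.
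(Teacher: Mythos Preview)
Your proof is correct and takes essentially the same approach as the paper: the corollary is stated immediately after Theorem~\ref{T:affineschub} without further argument, so the intended proof is exactly the direct specialization to $w=s_i$ that you carry out. Your use of Bruhat-monotonicity of the Demazure product to cut the sum down to $\{\id,s_i\}\times\{\id,s_i\}$ is the natural way to make this explicit.
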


\begin{prop}\label{P:reflectionclass} For all $i\in\Iaf$ we have
	\begin{align}
		\label{E:Greflection}
		1-G^{s_i} &= (1- G^{s_0})^\ell
	\end{align}
where $\ell=\level(\La_i)$.
\end{prop}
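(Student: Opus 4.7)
The plan is a reduction via localization to a numerical identity, followed by a multiplicativity argument using the coproduct, and concluding with a finite verification at simple coroots. The case $i=0$ is tautological since $\ell=\level(\La_0)=1$, so assume $i\in\Ifin$.

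First, both sides of the claimed identity lie in the pullback subring $p^*(K_T^*(\gr))\subset K_T^*(\flags)$, whose elements are determined by their localizations at $T$-fixed points. By Lemma~\ref{L:wrongwayloc}, the class $1-p^*(G^{s_j})$ localizes at any fixed point $t_\mu v$ to $1-\psi^{s_j}(t_\mu)$, independent of $v\in\Wf$. Setting $f_j(\mu):=1-\psi^{s_j}(t_\mu)\in R(T)$, the proposition reduces to the numerical identity
\begin{align*}
	f_i(\mu) = f_0(\mu)^\ell \qquad\text{for all } \mu\in Q^\vee.
\end{align*}

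The crucial observation is that $f_j\colon Q^\vee \to R(T)^\times$ is a group homomorphism. Each translation $t_\mu$ lies in the Peterson subalgebra $\LL$ since it commutes with $R(T)$ under the level-zero action. Applying Proposition~\ref{P:coprod} with $a=t_\mu\in\LL$ and $b=t_\nu\in\bK$ to the product $t_\mu t_\nu=t_{\mu+\nu}$, and using the coproduct computed from Proposition~\ref{P:coprod1},
\begin{align*}
	\Delta(\psi^{s_j}) = 1\otimes\psi^{s_j} + G^{s_j}\otimes 1 - G^{s_j}\otimes\psi^{s_j},
\end{align*}
together with the identity $\pair{t_\mu}{G^{s_j}}_\gr=\psi^{s_j}(t_\mu)$ (from the projection formula~\eqref{E:projectionformula} combined with Lemma~\ref{L:wrongwayloc}), one obtains
\begin{align*}
	\psi^{s_j}(t_{\mu+\nu}) = \psi^{s_j}(t_\mu)+\psi^{s_j}(t_\nu)-\psi^{s_j}(t_\mu)\psi^{s_j}(t_\nu),
\end{align*}
which rearranges to $f_j(\mu+\nu)=f_j(\mu)f_j(\nu)$.

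Since both $f_i$ and $f_0^\ell$ are homomorphisms $Q^\vee\to R(T)^\times$, the identity $f_i=f_0^\ell$ holds once verified on the generating set of simple coroots $\{\alpha_k^\vee : k\in\Ifin\}$. Each remaining identity $f_i(\alpha_k^\vee)=f_0(\alpha_k^\vee)^\ell$ is a specific equation in $R(T)$, verifiable by expanding $t_{\alpha_k^\vee}\in\bK$ in the $T_w$-basis via $s_m=T_\id+(1-e^{\alpha_m})T_{s_m}$ combined with the commutation rule~\eqref{E:Tcomm}, then reading off the coefficients of $T_{s_i}$ and $T_{s_0}$. The expected outcome is $f_j(\alpha_k^\vee)=e^{-\ell_j\alpha_k}$ under the standard Killing-form identification of a coroot with its image in $\Xfin$, which immediately yields $f_0(\alpha_k^\vee)^\ell=e^{-\ell\alpha_k}=f_i(\alpha_k^\vee)$.

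The main obstacle is this final explicit computation. It is routine in principle but requires care with the level-zero action (where $\alpha_0$ is interpreted as $-\theta$ in $R(T)$) and with type-specific normalizations for the coroot-to-weight identification in non-simply-laced cases. The multiplicativity obtained from the coproduct formula is the essential structural ingredient: it collapses an infinite family of localization identities into a finite check at simple coroots.
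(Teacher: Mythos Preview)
Your reduction via the coproduct—showing that $f_j(\mu)=1-\psi^{s_j}(t_\mu)$ is multiplicative in $\mu$—is correct and is a genuinely different route from the paper. The paper does not exploit this multiplicativity at all; instead it invokes the explicit big-torus localization formula $\psi^{s_i}_{\Taf}(w)=1-e^{\La_i-w\cdot\La_i}$ from \cite{KS}, restricts via $\cl$ to the small torus, and then observes that $\La_i-\ell\La_0=\af(\omega_i)$ has level zero, so that $t_\mu$ moves it only by a multiple of $\delta$, which $\cl$ annihilates. This yields $f_i(\mu)/f_0(\mu)^\ell=1$ directly for \emph{every} $\mu\in Q^\vee$ at once, with no reduction to generators.

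Your argument, however, has a genuine gap precisely where you flag it: the verification on simple coroots. Expanding $t_{\al_k^\vee}$ in the $T_w$-basis is not routine in any uniform sense—the length $\ell(t_{\al_k^\vee})$ grows with the rank and depends on type, and you offer no mechanism for extracting the coefficients of $T_{s_i}$ and $T_{s_0}$ from such an expansion other than brute force. For a fixed $G$ this is a finite check, but the proposition is meant to hold for arbitrary semisimple $G$, and a type-by-type computation is not a proof. In effect, establishing your ``expected outcome'' $f_j(\mu)=e^{-\ell_j\,\nu(\mu)}$ uniformly is exactly what the \cite{KS} formula supplies; without it (or an equivalent closed-form localization input) your multiplicativity reduction, while structurally sound, does not close. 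As a minor point, the identification of $\al_k^\vee$ with $\al_k$ in your expected formula is only literally correct in simply-laced type; in general one needs the form-induced map $\nu\colon Q^\vee\to \Xfin$, though this does not affect the ratio $f_i/f_0^\ell$.
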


\begin{proof}
Let $\{\psi^{x}_{\Taf} \mid x \in \Wa\}$ denote the equivariant Schubert basis of $K^*_{\Taf}(\flags)$, where $\Taf \cong T \times \C^\times$ denotes the affine maximal torus.
For all $i\in\Iaf$, in $K^*_{\Taf}(\flags)$ we have \cite{KS}\footnote{The conventions here differ by a sign to those in \cite{KS}. For example, for us $\psi^{s_i}(s_i)=1-e^{\alpha_i}$.}
\begin{align*}
\psi^{s_i}_{\Taf}(w)&= 1-e^{\La_i - w \cdot \La_i}\qquad\text {for all $w\in\Wa$.}
\end{align*}
For all $\mu\in Q^\vee$ and $v\in\Wf$ we have
\begin{align*}
	G^{s_i}(t_\mu v)
 =\psi^{s_i}(t_\mu) 
 = \cl(\psi^{s_i}_{\Taf}(t_\mu)) 
 = \cl(1-e^{\La_i-t_{\mu} \cdot \La_i}).
\end{align*}
Applying this equation twice, we have
\begin{align*}
 {1-G^{s_i}(t_\mu v)  \over (1- G^{s_0}(t_\mu v))^\ell}   = \cl(e^{\La_i-t_{\mu} \cdot \La_i-\ell\La_0+\ell t_{\mu} \cdot \La_0})
  = \cl(e^{\af(\omega_i)-t_\mu \cdot \af(\omega_i)}) 
  = 1
\end{align*}
since for any level zero element $\la$ we have $t_\mu(\la) = \la - \pair{\mu}{\la} \delta$.
\end{proof}

\subsection{Ideal sheaf classes}
\def\bpsi{\bar \psi}
\def\bG{\bar G}
For a reduced word $w = s_{i_1} \cdots s_{i_\ell}$, define 
$
y_w:= y_{i_1} \cdots y_{i_\ell} \in \bK,
$
which does not depend on the choice of reduced word.
By \cite[Lemma A.3]{LSS}, we have $y_w = \sum_{v \leq w} T_v$.
We let $\{\bpsi^w \in K^*_T(\flags) \mid w \in \Wa\}$ denote the dual basis to $\{y_w \mid w \in \Wa\}$.  Thus $\ip{y_w}{\bpsi^v} = \delta_{w,v}$.  The element $\bpsi^w$ is denoted $\psi_{KK}$ in \cite{LSS}.

\begin{remark}
The Schubert basis element $\psi^w \in K^*_T(\flags)$ represents the class of the structure sheaf $\cO_w$ of a Schubert variety in the thick affine flag variety.  The element $\bpsi^w$ represents the ideal sheaf ${\mathcal I}_w$ of the boundary $\partial X_w$ in a Schubert variety $X_w$.  See \cite[Appendix A]{LSS}.
\end{remark}

Define 
$$
\bG^w:= \varphi^*(\bpsi^w) \in K^*_T(\gr)
$$
and as usual, we denote by $\bG^w$ the image of this element in $K^*_T(\flags)$.
Following \cite{LLMS}, define $l_u:= \sum_{v \in \Wz: v \leq u} k_v \in \LL$ and define $l_u^w \in R(T)$ by
\begin{equation}\label{E:lbasis}
l_u := \sum_{w \in \Wa} l_u^w y_w.
\end{equation}
The coefficients $l_u^w$ are related to $k_v^x$ by the formula
$$
l_u^w = \sum_{x \leq w} (-1)^{\ell(w)-\ell(x)} \sum_{\substack{v \in \Wz \\ v \leq u}} k_v^x.
$$
We have the following variants of Lemma \ref{L:Fcoeffs}, Proposition \ref{P:coprod1}, and Theorem \ref{T:affineschub} with identical proofs.

\begin{lem}\label{L:bGcoeffs} For $w\in \Wa$, we have
\begin{align}\label{E:bGcoeffs}
	\bG^w &= \sum_{u\in \Wz} l^w_u \,\bpsi^u_{\gr}
\end{align}
where the $l^w_u$ are defined in \eqref{E:lbasis}, and $\bpsi^u_{\gr}$ is determined by $\ip{l_v}{\bpsi^u_{\gr}} = \delta_{v,u}$. 
\end{lem}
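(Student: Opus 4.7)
The plan is to mirror the proof of Lemma \ref{L:Fcoeffs} step for step, pairing $\bG^w$ against the basis $\{l_u\}_{u\in\Wz}$ of $\LL$ instead of against the basis $\{k_u\}$. Since $\bpsi^u_\gr$ is defined by the duality $\ip{l_v}{\bpsi^u_\gr}=\delta_{v,u}$, the coefficient of $\bpsi^u_\gr$ in the expansion of $\bG^w\in K_T^*(\gr)$ is recovered by evaluating $\ip{l_u}{\bG^w}_{\gr}$, where the pairing between $\LL$ and $K_T^*(\gr)$ is the one induced by the isomorphism $\tocentralizer:K_*^T(\gr)\to\LL$ of Theorem \ref{T:centralizer} together with the $K$-homology/$K$-cohomology duality.

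First I would rewrite $\bG^w=\varphi^*(\bpsi^w)$ using the definition, and then apply the projection formula \eqref{E:projectionformula} to obtain
\[
\ip{l_u}{\bG^w}_{\gr}=\ip{l_u}{\varphi^*(\bpsi^w)}_{\gr}=\ip{\varphi_*(l_u)}{\bpsi^w}_{\flags}.
\]
Here I implicitly identify $l_u\in\LL$ with its preimage in $K_*^T(\gr)$ under $\tocentralizer$; the commutative diagram in Theorem \ref{T:centralizer} then guarantees that $\varphi_*(l_u)$, viewed in $\bK$, is again $l_u=\sum_{z\in\Wa} l^z_u\, y_z$ by the definition \eqref{E:lbasis}.

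Finally I would substitute this expansion into the pairing and use the defining duality $\ip{y_z}{\bpsi^w}_{\flags}=\delta_{z,w}$ of the ideal sheaf classes to conclude
\[
\ip{l_u}{\bpsi^w}_{\flags}=\sum_{z\in\Wa}l^z_u\,\ip{y_z}{\bpsi^w}_{\flags}=l^w_u,
\]
which yields \eqref{E:bGcoeffs}. There is no genuine obstacle: all the ingredients (projection formula, the identification of $\LL$ with $K_*^T(\gr)$ via $\tocentralizer\circ\varphi_*$, and the duality between $\{y_w\}$ and $\{\bpsi^w\}$) have already been established, so the proof is a direct transport of the one given for Lemma \ref{L:Fcoeffs}, with $(k_u,\psi^u_\gr,T_w,\psi^w)$ replaced throughout by $(l_u,\bpsi^u_\gr,y_w,\bpsi^w)$.
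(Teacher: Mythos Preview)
Your proof is correct and follows exactly the approach the paper intends: the paper states that Lemma~\ref{L:bGcoeffs} has an ``identical proof'' to Lemma~\ref{L:Fcoeffs}, and your argument is precisely the term-by-term transcription of that proof with $(k_u,T_w,\psi^w,\psi^u_{\gr})$ replaced by $(l_u,y_w,\bpsi^w,\bpsi^u_{\gr})$. The only bookkeeping to be mindful of is the identification of the pairing $\ip{\cdot}{\cdot}_{\flags}$ on $K_*^T(\flags)\times K_T^*(\flags)$ with the pairing \eqref{eq:pair} on $\bK\times K_T^*(\flags)$ via $\tonilHecke$, but you handle this correctly through the commutative diagram of Theorem~\ref{T:centralizer}.
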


\begin{prop} For $w\in \Wa$, we have
	\begin{align*}
		\Delta(\bpsi^w) &= \sum_{w=w_1 * w_2 } \bG^{w_1} \otimes \bpsi^{w_2}.
	\end{align*}
\end{prop}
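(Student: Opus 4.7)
The plan is to mirror the proof of Proposition \ref{P:coprod1}, but working in the $y$-basis of $\bK$ rather than the $T$-basis. The key algebraic identity I would need is the signless product rule
\[
y_x \, y_v \;=\; y_{x * v} \qquad \text{for all } x, v \in \Wa,
\]
which contrasts with the signed version $T_x T_v = (-1)^{\ell(x)+\ell(v)-\ell(x*v)} T_{x*v}$ used there. This identity follows from the idempotency $y_i^2 = y_i$ (immediate from $T_i^2 = -T_i$, since $(1+T_i)^2 = 1 + 2T_i + T_i^2 = 1 + T_i$) together with the braid relations that the $y_i$ inherit from the $T_i$; equivalently, under $y_w \mapsto w$ the $\{y_w\}_{w \in \Wa}$ realize a copy of the $0$-Hecke algebra with Demazure multiplication.

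Granting this product rule, for $u \in \Wz$ and $v \in \Wa$ I would compute
\[
l_u \, y_v \;=\; \sum_{x \in \Wa} l_u^x \, y_x \, y_v \;=\; \sum_{w \in \Wa} \Bigl(\sum_{\substack{x \in \Wa \\ x * v = w}} l_u^x\Bigr) y_w,
\]
which exhibits the matrix of the multiplication map $\LL \otimes_{R(T)} \bK \to \bK$ in the bases $\{l_u \otimes y_v\}$ and $\{y_w\}$. Since $\{\bpsi_{\gr}^u\}$ is dual to $\{l_u\}$ and $\{\bpsi^w\}$ is dual to $\{y_w\}$, the coproduct $\Delta$ has the transposed matrix, yielding
\[
\Delta(\bpsi^w) \;=\; \sum_{u \in \Wz,\, v \in \Wa} \Bigl(\sum_{x : x*v = w} l_u^x\Bigr) \bpsi_{\gr}^u \otimes \bpsi^v.
\]
Substituting $\bG^x = \sum_u l_u^x \bpsi_{\gr}^u$ from Lemma \ref{L:bGcoeffs} collapses the sum over $u$ and produces $\Delta(\bpsi^w) = \sum_{w_1 * w_2 = w} \bG^{w_1} \otimes \bpsi^{w_2}$, as desired. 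Alternatively, one can obtain the same conclusion by pairing both sides of \eqref{E:coprod} (Proposition \ref{P:coprod}) with $l_u \otimes y_v$ and reading off coefficients.

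The only non-routine step is the verification of $y_x y_v = y_{x*v}$; this is precisely where the absence of Bruhat-length signs in the final coproduct formula originates, distinguishing the ideal-sheaf case from the structure-sheaf case of Proposition \ref{P:coprod1}. Everything else is a mechanical transposition of matrices between dual bases.
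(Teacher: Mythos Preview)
Your proposal is correct and is essentially the same as the paper's approach: the paper states that this proposition has an ``identical proof'' to Proposition~\ref{P:coprod1}, meaning one replaces the basis $\{k_u \otimes T_v\}$ by $\{l_u \otimes y_v\}$ and uses the signless multiplication $y_x y_v = y_{x*v}$ in place of the signed rule for the $T$'s, exactly as you do. Your identification of the idempotency $y_i^2 = y_i$ as the source of the sign-free formula is the only point worth noting, and you handle it correctly.
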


\begin{thm} For $w\in\Wa$, we have
	\begin{align*}
		\bpsi^w &= \sum_{\substack{(w_1,w_2)\in \Wa\times\Wf \\ w_1*w_2=w}} \bG^{w_1} \cup \ev_1^*(\bpsi^{w_2}_{G/B}) 
	\end{align*}
\end{thm}

\subsection{$\bullet$-action on affine Schubert classes}
We investigate the behavior of the decomposition in Theorem \ref{T:affineschub} under the $\bullet$-action of $\bK$.  By Lemma \ref{L:cupproduct} and Theorem \ref{T:tensor}, it is enough to separately describe how $\theta(\psi^x)$ for $x\in\Wa$, and $\eta(\psi^w)$ for $w\in\Wf$, behave under the $\bullet$-action.  
For $\eta(\psi^w)$, Proposition \ref{P:eta} gives the following.

\begin{prop} \label{P:nilHeckeeta} For $a\in\bK$ and $w\in \Wf$, we have
\begin{align}
	a\bullet \eta(\psi^w) &= \eta(\cl_{\bK}(a)\bullet \psi^w)
\end{align}
and in particular,
\begin{align}
\label{E:s0bulleteta}
	s_0 \bullet \eta(\psi^w) &= \eta(s_\theta\bullet \psi^w) \\
\label{E:A0bulleteta}
	y_0 \bullet \eta(\psi^w) &= \eta(y_{-\theta} \bullet \psi^w)
\end{align}
where $y_{-\theta}:=1+ T_{-\theta}$.
\end{prop}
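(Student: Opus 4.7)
The plan is to prove the slightly more general identity
\begin{align*}
a\bullet\eta(\psi) &= \eta(\cl_{\bK}(a)\bullet\psi)
\end{align*}
for all $a\in\bK$ and all $\psi\in K^*_T(\flags)$, which specializes to the stated formula upon taking $\psi=\psi^w$. Using the semidirect decomposition $\Wa=Q^\vee\rtimes\Wf$, I would expand $a\in\bK\subset\bK_{Q(T)}$ as a $Q(T)$-linear combination $a=\sum_{\mu\in Q^\vee,\,v\in\Wf}q_{\mu,v}\,t_\mu v$. Since $\bullet$ is a left $\bK$-action, namely $(xy)\bullet=(x\bullet)\circ(y\bullet)$, it suffices to treat each summand $q_{\mu,v}\,t_\mu v$ separately. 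Applying in succession Proposition \ref{P:eta}(6), (5), and (4) yields
\begin{align*}
(q_{\mu,v}\,t_\mu v)\bullet\eta(\psi)
&= q_{\mu,v}\bullet\bigl(t_\mu\bullet(v\bullet\eta(\psi))\bigr) \\
&= q_{\mu,v}\bullet\bigl(t_\mu\bullet\eta(v\bullet\psi)\bigr) \\
&= q_{\mu,v}\bullet\eta(v\bullet\psi) \\
&= \eta(q_{\mu,v}v\bullet\psi).
\end{align*}
Summing over $\mu,v$, the right-hand side becomes $\eta\bigl((\sum_{\mu,v}q_{\mu,v}v)\bullet\psi\bigr)=\eta(\cl_{\bK}(a)\bullet\psi)$, since $\cl_{\bK}$ is defined by $\cl_{\bK}(t_\mu v)=v$ extended $Q(T)$-linearly.

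For the specializations, I would apply the identity to $a=s_0=t_{\theta^\vee}s_\theta$, which has $\cl_{\bK}(s_0)=s_\theta$, giving \eqref{E:s0bulleteta}; and to $a=y_0=1+T_0$, using the computation $\cl_{\bK}(T_0)=T_{-\theta}$ already recorded in the excerpt just after \eqref{E:clbA}, so that $\cl_{\bK}(y_0)=1+T_{-\theta}=y_{-\theta}$, giving \eqref{E:A0bulleteta}.

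The only mild obstacle is that Proposition \ref{P:eta}(4) is stated for $q\in R(T)$ rather than $q\in Q(T)$, but this extends immediately to $Q(T)$ because the proof of (4) is a pointwise localization check that is insensitive to the scalar extension $R(T)\subset Q(T)$; the argument therefore takes place comfortably inside $\bK_{Q(T)}$. Since $\cl_{\bK}(a)\in\bKfin$ whenever $a\in\bK$, the right-hand side automatically lies in $K^*_T(\flags)$, matching the left-hand side without any further analysis.
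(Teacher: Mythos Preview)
Your argument is correct and is precisely the approach the paper intends: the paper's proof consists solely of the remark that Proposition~\ref{P:eta} yields the result, and you have unpacked this by writing $a\in\bK_{Q(T)}$ in the $Q(T)$-basis $\{t_\mu v\}$ and applying parts (4), (5), (6) termwise. Your handling of the extension of (4) to $Q(T)$ is fine, and the specializations to $s_0$ and $y_0$ are exactly as in the paper.
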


Since $\theta(\psi^x)$ can be expanded in the basis $\psi^u$ for $u\in\Wz$,
it is enough to consider the $\bullet$ action on $\psi^u$.

\begin{thm}\label{T:nilHecketheta}
For $\la\in \Xfin$, $i\in\Iaf$, and $u \in \Wz$, we have
\begin{enumerate}
\item
$e^\la \bullet \psi^u = [\cL_\la] \cup \psi^u$,
\item
$y_i \bullet \psi^u =\psi^u$ and $s_i\bullet \psi^u=\psi^u$ if $i \in I$,
\item For $u\in\Wz\setminus\{\id\}$, we have
\begin{align}\label{E:A0bulletSchub}
  y_0 \bullet \psi^u &= \psi^{us_0} 
  = \sum_{\substack{(x_1,x_2)\in\Wa\times\Wf \\ x_1*x_2=us_0}} (-1)^{\ell(x_1)+\ell(x_2) - \ell(u) -1}
	\theta(\psi^{x_1}) \cup \eta(\psi^{x_2}).
\end{align}
\item For $u\in\Wz\setminus \{\id\}$, we have
\begin{align}\label{E:s0bulletSchub}
	s_0\bullet \psi^u &= 
	e^{-\theta}\psi^u + \sum_{\substack{(x_1,x_2)\in\Wa\times\Wf \\ x_1*x_2=us_0}} (-1)^{\ell(x_1)+\ell(x_2) - \ell(u) - 1}
	\theta(\psi^{x_1}) \cup \eta((1-e^{-\theta})\bullet \psi^{x_2}).
\end{align}
\end{enumerate}
\end{thm}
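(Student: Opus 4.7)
The four parts have increasing subtlety, and I would reduce all of them to \eqref{E:Aright}, Lemma \ref{L:Xbullet}, and Theorem \ref{T:affineschub}. Part (1) is an immediate instance of Lemma \ref{L:Xbullet}. For Part (2), since $u\in\Wz$ is the minimal length element of $u\Wf$ we have $us_i>u$ for every $i\in I$, so \eqref{E:Aright} gives $y_i\bullet\psi^u=\psi^u$; and since $\psi^u = p^*(\psi^u_{\gr}) = \theta(\psi^u)$ for $u\in\Wz$ (using $\varphi^*\circ p^*=\id$), Proposition \ref{P:theta}(4) yields $s_i\bullet\psi^u = s_i\bullet\theta(\psi^u) = \theta(\psi^u) = \psi^u$ for $s_i \in \Wf$.

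For Part (3), I would first observe that every reduced word of $u \in \Wz\setminus\{\id\}$ must end in $s_0$: a terminal $s_i$ with $i \in I$ would give $us_i<u$ and contradict $u\in\Wz$. Hence $us_0 < u$ with $\ell(us_0) = \ell(u) - 1$, and \eqref{E:Aright} yields $y_0\bullet\psi^u = \psi^{us_0}$. Substituting the Theorem \ref{T:affineschub} expansion of $\psi^{us_0}$ and converting $(-1)^{\ell(x_1)+\ell(x_2) - \ell(us_0)}$ to $(-1)^{\ell(x_1)+\ell(x_2) - \ell(u) - 1}$ via $\ell(us_0) = \ell(u) - 1$ gives the stated formula.

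Part (4) is the technical core. The plan is to expand $s_0 = 1 + (1-e^{\alpha_0})T_0$ in $\bK$ (from the definition of $T_0$), identify $e^{\alpha_0} = e^{-\theta}$ in $R(T)$ via the level-zero action, and apply the $\bullet$-action to $\psi^u$. Since $T_0 = y_0 - 1$, Part (3) gives $T_0\bullet\psi^u = \psi^{us_0} - \psi^u$; Lemma \ref{L:Xbullet} identifies $(1-e^{-\theta})\bullet$ with cup product by $1-[\cL_{-\theta}]$ to obtain
\[ s_0\bullet\psi^u \;=\; \psi^u + (1-[\cL_{-\theta}])\cup(\psi^{us_0}-\psi^u). \]
To rewrite the second summand in the stated form, I would use that $\eta = \ev_1^*\circ q^*$ is an $R(T)$-algebra homomorphism with $\eta([\cL_{-\theta}]) = [\cL_{-\theta}]$ by Lemma \ref{L:evonelinebundle}, so that $\eta\bigl((1-e^{-\theta})\bullet\psi^{x_2}\bigr) = (1-[\cL_{-\theta}])\cup\eta(\psi^{x_2})$; combining this with Theorem \ref{T:affineschub} applied to $\psi^{us_0}$ identifies the sum on the right-hand side with $(1-[\cL_{-\theta}])\cup\psi^{us_0}$, completing the match. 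The principal obstacle is the careful bookkeeping in Part (4): one must distinguish scalar multiplication by $e^\la\in R(T)$ (via the $\cdot$-action, \eqref{E:Sleft}) from cup product with the line bundle class $[\cL_\la]$ (via the $\bullet$-action, \eqref{E:Sright}), and track the sign change from $\ell(us_0)=\ell(u)-1$ consistently through the Theorem \ref{T:affineschub} expansion.
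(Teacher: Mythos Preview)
Your argument is correct and follows the same route as the paper, which simply says to deduce everything from Proposition~\ref{P:dotbullet} together with the identity $s_0 = e^{-\theta} + (1-e^{-\theta})y_0$; your decomposition $s_0 = 1 + (1-e^{-\theta})(y_0-1)$ is the same identity rewritten, and your use of Theorem~\ref{T:affineschub} to expand $\psi^{us_0}$ supplies exactly the extra detail the paper omits. One small point you flagged but did not close: your Part~(4) computation produces $[\cL_{-\theta}]\cup\psi^u$ as the first summand, so the symbol $e^{-\theta}\psi^u$ in \eqref{E:s0bulletSchub} is to be read as $e^{-\theta}\bullet\psi^u$ (consistent with the paper's own one-line derivation via the $\bullet$-action), not as scalar multiplication.
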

\begin{proof} These formulae may be deduced from Proposition \ref{P:dotbullet} using $s_0 =e^{-\theta} + (1-e^{-\theta}) y_0$.
\end{proof}

\begin{rem}
The $\cdot$ and $\bullet$ actions of $\bK$ make $K_T^*(\thickaffgr) \otimes_{R(T)} K_T^*(G/B)$ into a left $(\bK\times\bK)$-module such that the map \eqref{E:tensormap} is a left $(\bK\times\bK)$-module isomorphism.
\end{rem}

\subsection{Recursion}
The affine Schubert classes in the tensor product $K^*_T(\gr) \otimes_{R(T)} K^*_T(G/B)$ are determined by the following recursion.
\begin{enumerate}
\item $\psi^u = \psi^u_\gr \otimes 1$ for $u \in \Wz$, and
\item For all $i\in\Iaf$, $$y_i \bullet \psi^w = 
	\begin{cases}
		\psi^{ws_i} & \text {if $ws_i<w$} \\
		\psi^w & \text {otherwise.}
	\end{cases}$$
\end{enumerate}
The operator $y_i$ acts on $K^*_T(\gr) \otimes_{R(T)} K^*_T(G/B)$ by
$$
\Delta(y_i) = (1-e^{\alpha_i})y_i\otimes y_i + e^{\alpha_i}\left( y_i \otimes 1 + 1 \otimes y_i - 1 \otimes 1\right)
$$
which follows from \eqref{E:deltaA}.
%
\section{Cohomology}\label{sec:cohom}
In this section, we indicate the modifications necessary for the preceding results to hold in cohomology.  

\begin{remark}
It would be interesting to deduce Theorem~\ref{T:H affine coproduct} (and other results in cohomology) directly from Theorem~\ref{T:affineschub} (and other $K$-theoretic results), for example by using the Chern character or by taking ``lowest degree" terms.  In particular, we do not know the relation between the coefficients $j^w_u$ in Theorem~\ref{thm:jzw} and the $k^w_u$ in Theorem~\ref{T:centralizerSchub}.
\end{remark}

\subsection{Small-torus affine nilHecke ring}
\label{S:HnilHecke}
Instead of $R(T)$, we work over $S = \Sym_\Z(\Xfin) \cong H^*_T(\pt)$.  The algebra $\bK$ is replaced by the small-torus affine nilHecke ring $\bA$, as defined in \cite[Chapter 4]{book}. Let $\bA_0$ be the nilCoxeter algebra, the ring generated by elements $A_i$ for $i\in \Iaf$ which satisfy the braid relations for $\Wa$
and the relation $A_i^2=0$. We have $\bA_0 = \bigoplus_{w\in \Wa} \Z A_w$ where
$A_w = A_{i_1} A_{i_2}\dotsm A_{i_\ell}$ for a reduced decomposition $w=s_{i_1}s_{i_2}\dotsm s_{i_\ell}$.
Let $Q(S)$ be the fraction field of $S$ and let $\bA_{Q(S)} = Q(S) \otimes_{\Q} \Q[\Wa]$ be the twisted group algebra of $\Wa$ with coefficients in $Q(S)$, with product $(q' \otimes w)(q \otimes v) = q' w(q) \otimes wv$ for $q,q'\in Q(S)$ and $w,v\in \Wa$. Then $\bA$ is the subring of $\bA_{Q(S)}$ generated by $S$ and $\bA_0$. We have the following analogue of \eqref{E:Tbasis}:
$
\bA = \bigoplus_{w \in \Wa} S A_w.
$
Instead of the Demazure product, we will make use of length-additive products.  Write $w \doteq uv$ if $w = uv$ and $\ell(w) = \ell(u) + \ell(v)$. Note that $A_u A_v=A_w$
if and only if $w\doteq uv$. This notation naturally extends to longer products.

\subsection{$\bA$-$\bA$-bimodule structure on cohomology of affine flag variety} 
Localization identifies $H^*_T(\flags)$ with a $S$-subalgebra of $\F(\Wa,S)$.  We identify a cohomology class $\xi \in H^*_T(\flags)$ with a function $\xi \in \F(\Wa,S)$ taking values $\xi(v)$, $v \in \Wa$.  For the small torus affine GKM condition see \cite[Section 4.2]{book}.

There is a $S$-bilinear perfect pairing $\langle \cdot,\cdot \rangle: \bA \times H^*_T(\flags)$ characterized by $\langle w, \xi \rangle = \xi(w)$.

There is a left action $\xi \mapsto a \cdot \xi$ of $\bA$ on $H^*_T(\flags)$ given by the formulae \cite[Chapter 4, Proposition 3.16]{book}
\begin{align}
(q \cdot \xi)(a) &= q\,\xi(a) \\
(A_i \cdot \xi)(a) &= A_i \cdot \xi(s_i a) + \xi(A_i a) \\
(w \cdot \xi)(a) &= w\,\xi(w^{-1} a)
\end{align}
for $a \in \bA$, $\xi \in H^*_T(\flags)$, $q \in S$, $i \in \Iaf$, and $w \in \Wa$. Here, $A_i$ acts on $S$ via
\begin{align}
\label{E:AonX}
	A_i(\la) &= \pair{\alpha_i^\vee}{\la} \id \\
\label{E:Aonproduct}
    A_i(q q') &= A_i(q)q' + (s_i \cdot q) A_i(q').
\end{align}

There is another left action $\xi \mapsto a \bullet \xi$ of $\bA$ on $H^*_T(\flags)$ given by \cite[Chapter 4, Section 3.3]{book}
\begin{align}
(a \bullet \xi)(b)  = \xi(ba)
\end{align}
for $a,b \in \bA$ and $\xi \in H^*_T(\flags)$.

Let $c_1(\cL_\la) \in H^*_T(\flags)$ denote the first Chern class of the $T$-equivariant line bundle with weight $\la \in X$ on $\flags$.  Explicitly \cite[Chapter 4, Section 3]{book}
\begin{align}\label{E:Hlinebundleloc}
	\pair{t_\mu v}{c_1(\cL_\la)} = v \cdot \la \qquad\text {$\mu\in Q^\vee$, $v\in \Wf$}
\end{align}

\begin{lem} \label{L:HXbullet} For any $\la\in X$ and $\xi\in H_T^*(\flags)$, we have $\la \bullet \xi = c_1(\cL_\la) \cup \xi$.
\end{lem}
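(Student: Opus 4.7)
The plan is to mimic the proof of Lemma~\ref{L:Xbullet} line-by-line, replacing the $K$-theoretic formulas with their cohomological counterparts. The core idea is that the localization map $H^*_T(\flags)\hookrightarrow \F(\Wa,S)$ is injective, so it suffices to verify the identity at each torus fixed point $t_\mu v$ for $\mu\in Q^\vee$ and $v\in\Wf$.

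First I would unfold the left-hand side using the definition of $\bullet$: $\pair{t_\mu v}{\la \bullet \xi} = \pair{t_\mu v \cdot \la}{\xi}$, where $\la \in \Xfin$ is regarded as an element of $S \subset \bA$. Next I would commute $\la$ past $t_\mu v$ inside the smash product $\bA$. Since the action of $\Wa$ on $\Xfin$ is the level-zero action (factoring through $\cl_{\Wa}$), we have $t_\mu v \cdot \la = v \la \in S$, so in $\bA$ the identity $t_\mu v \cdot \la = (v\la)\, t_\mu v$ holds. This gives
\[
\pair{t_\mu v}{\la \bullet \xi} = (v\la)\, \pair{t_\mu v}{\xi}.
\]

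For the right-hand side, I would use \eqref{E:Hlinebundleloc}, which says $\pair{t_\mu v}{c_1(\cL_\la)} = v\la$. Since the inclusion $H^*_T(\flags) \hookrightarrow \F(\Wa, S)$ carries cup product to pointwise product (just as in the $K$-theory setting described above Lemma~\ref{L:0Heckeacts}), we compute
\[
\pair{t_\mu v}{c_1(\cL_\la) \cup \xi} \;=\; \pair{t_\mu v}{c_1(\cL_\la)} \cdot \pair{t_\mu v}{\xi} \;=\; (v\la)\, \pair{t_\mu v}{\xi}.
\]
Comparing the two expressions, the identity $\la \bullet \xi = c_1(\cL_\la) \cup \xi$ holds at every fixed point, and injectivity of localization finishes the proof.

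I do not anticipate any real obstacle: the only thing worth being careful about is that the level-zero action is being used correctly and that $\la$ is being consistently interpreted as an element of $S$ via $\Xfin \hookrightarrow S$ rather than, say, as an affine weight in $\Xaf$. Everything else is a direct transcription of the $K$-theoretic argument, with the translation $e^\la \leftrightarrow \la$ and $[\cL_\la] \leftrightarrow c_1(\cL_\la)$, and with $e^{v\la}$ replaced by its additive analogue $v\la$.
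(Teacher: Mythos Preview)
Your proposal is correct and follows exactly the approach the paper intends: the lemma is stated without proof in Section~\ref{sec:cohom} precisely because it is the direct cohomological transcription of Lemma~\ref{L:Xbullet}, and your argument reproduces that proof with the substitutions $e^\la\leftrightarrow\la$, $[\cL_\la]\leftrightarrow c_1(\cL_\la)$, and $e^{v\la}\leftrightarrow v\la$.
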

%
%

\subsection{Endomorphisms}

Let $\PP = Z_{\bA}(S)$ be the centralizer of $S$ in $\bA$, called the Peterson subalgebra.  We have the cohomological wrong way map $\varphi^*:H^*_T(\flags) \to H^*_T(\gr)$.
%
%

\begin{thm}[{\cite{Pet} \cite{Lam:kschur} \cite[Chapter 4, Theorem 4.9]{book}}] \label{T:Hcentralizer}
\def\Htocentralizer{j}
There is an isomorphism $\Htocentralizer: H_*^T(\gr) \to \PP$ making the following commutative diagram of ring and left $R(T)$-module homomorphisms:
\begin{align*}
\begin{diagram}
\node{H_*^T(\gr)}\arrow {s,t}{\varphi_*} \arrow {e,t}{\Htocentralizer} \node{\PP}  
\arrow {s,J}{}
\\ \node{H_*^T(\flags)} \arrow {e,b}{} \node{\bA} 
\end{diagram}
\end{align*}
\end{thm}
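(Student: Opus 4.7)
The plan is to transpose the $K$-theoretic proof of Theorem~\ref{T:centralizer} to cohomology, replacing Demazure products with length-additive ones and the $0$-Hecke generators $T_i$ by nilCoxeter generators $A_i$. The argument splits into three blocks: identify $\PP$ explicitly inside $\bA$, produce a canonical basis $\{j_w \mid w \in \Wz\}$ of $\PP$ with triangular leading terms, then match this basis against the Schubert basis of $H^T_*(\gr)$ via the projection formula.

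First I would establish the cohomological analogue of the $K$-theoretic identification of $\LL$, namely $\PP = \bigoplus_{\mu \in Q^\vee} Q(S)\, t_\mu \cap \bA$. Writing an element of $\bA_{Q(S)}$ as $\sum_{w \in \Wa} a_w w$ and imposing commutation with every $\la \in \Xfin$ forces each $w$ in the support to act trivially on $\Xfin$ via the level-zero action, hence to be a pure translation. Next, for each $w \in \Wz$ I would construct the unique $j_w \in \PP$ of the form $j_w = A_w + \sum_{z \in \Wa \setminus \Wz} j_w^z A_z$ with $j_w^z \in S$. Uniqueness follows because any nonzero element of $\PP$ whose $A_z$-support lies entirely in $\Wa \setminus \Wz$ would force a pure translation $t_\mu$ in the $w$-expansion to have nontrivial $\Wf$-component, contradicting the decomposition of the previous step. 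Existence comes either by induction on Bruhat order, using the right nilCoxeter action of $A_i$ for $i \in \Ifin$ to eliminate tails, or by matching $S$-ranks.

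I would then define the map by $j(\xi_w^{\gr}) := j_w$ and extend $S$-linearly. Commutativity of the diagram amounts to the identity that, under the Kostant--Kumar isomorphism $H^T_*(\flags) \xrightarrow{\sim} \bA$, the element $\varphi_*(\xi_w^{\gr})$ for $w \in \Wz$ is sent to $j_w$. Pairing both sides against $\xi^x$ for $x \in \Wa$ and invoking the projection formula $\langle \alpha, \varphi^*(\beta) \rangle_{\gr} = \langle \varphi_*(\alpha), \beta \rangle_{\flags}$ reduces this to the cohomology analogue of Lemma~\ref{L:Fcoeffs}, stating that the affine Stanley class $F^x = \varphi^*(\xi^x)$ expands on the Schubert basis of $H^*_T(\gr)$ with coefficients $j_u^x$. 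The ring-homomorphism property of $j$ is then inherited from the identification of the Pontryagin product on $H^T_*(\gr)$ with multiplication in $\PP \subset \bA$, analogous to Proposition~\ref{P:coprod}.

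The main obstacle is the second step: the existence and structural characterization of the $j$-basis, which is the genuine content of the Peterson/Lam theorem. Once this is secured, both the ring-homomorphism property and the commutativity of the square reduce to formal manipulations with the nilHecke pairing and the projection formula.
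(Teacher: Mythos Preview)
The paper does not supply its own proof of this theorem; it is quoted as an external result from \cite{Pet}, \cite{Lam:kschur}, and \cite[Chapter~4, Theorem~4.9]{book}, just as the $K$-theoretic Theorem~\ref{T:centralizer} is quoted from \cite{LSS}. So there is no in-paper argument to compare against.

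Your outline is broadly faithful to how the cited references proceed, but the order of construction is inverted. In \cite{Lam:kschur} and \cite{book} the map $j$ is defined first, as the composite $H^T_*(\gr)\xrightarrow{\varphi_*} H^T_*(\flags)\xrightarrow{\sim}\bA$; one then checks that its image lands in $Z_{\bA}(S)$ (this is where the geometry of $\Omega K$ enters, via commutativity of the Pontryagin product with the $S$-action), and only afterwards reads off that $j_w:=j(\xi_w^{\gr})$ has the triangular form $A_w+\sum_{z\notin\Wz} j_w^z A_z$. Your plan instead tries to manufacture the $j_w$ directly inside $\PP$ and then assemble $j$ from them. That is logically fine provided the existence step goes through, and you correctly flag this as the crux.

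The concrete gap is your existence argument. Neither ``induction on Bruhat order using right $A_i$ for $i\in\Ifin$ to eliminate tails'' nor ``matching $S$-ranks'' quite works as stated: right multiplication by $A_i$ does not preserve $\PP$, and a rank count over $S$ does not by itself produce elements with the required integral (over $S$, not $Q(S)$) coefficients and the precise leading term $A_w$. The actual proofs obtain existence either geometrically (the image of a Schubert class under $\varphi_*$ automatically lies in $\PP$ and has the right leading term by compatibility of the Schubert bases under $\varphi_*$) or via an explicit localization computation. If you want a purely algebraic route, the cleanest is to \emph{define} $j$ as above, prove $\mathrm{Im}(j)\subset\PP$ by showing $H^T_*(\gr)$ is central for the $S$-action, and then deduce the triangular shape of $j_w$ from the factorization $\Wa=\Wz\cdot\Wf$ together with $\varphi_*(\xi_w^{\gr})=\xi_w$ in $H^T_*(\flags)$.
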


The maps 
\begin{align*}
p^*&:H_T^*(\thickaffgr)\to H_T^*(\flags) \\
\theta&: H^*_T(\flags) \to H^*_T(\flags) \\
\Delta&: H^*_T(\flags) \to H^*_T(\gr) \otimes_S H^*_T(\flags) \\
\ev_1^*&:H_T^*(G/B)\to H_T^*(\flags) \\
\eta&: H^*_T(\flags) \to H^*_T(\flags) \\
\kappa&: H^*_T(\flags) \to H^*_T(\flags)
\end{align*}
are defined as for $K$-theory.
Lemma \ref{L:wrongwayloc}, Proposition \ref{P:coprod}, Lemma \ref{L:coprod}, Lemma \ref{L:evoneloc} hold in cohomology with the obvious modifications.  Lemma \ref{L:evonelinebundle} holds with $c_1(\cL_\la)$ replacing $[\cL_\la]$.

\begin{prop}\label{P:coprodbullet}
For $\xi \in H^*_T(\flags)$ and $a \in \bA$, we have
$$
a \bullet \xi = \sum_{(\xi)} \xi_{(1)} \cup \eta(a \bullet \xi_{(2)})
$$
where $\Delta(\xi) = \sum_{(\xi)} \xi_{(1)} \otimes \xi_{(2)}$.
In particular, taking $a = 1$, we have the identity
$$
\cup \circ (1 \otimes \eta) \circ \Delta = 1
$$
in $\End_S(H^*_T(\flags))$.
\end{prop}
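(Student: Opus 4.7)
The plan is to mimic the proof of Proposition \ref{P:coprodbulletK} verbatim in cohomology, using the fact (explicitly asserted in the paragraph preceding the proposition) that Lemma \ref{L:wrongwayloc}, Proposition \ref{P:coprod}, Lemma \ref{L:coprod}, and Lemma \ref{L:evoneloc} all hold in cohomology with the obvious modifications. Since $H^*_T(\flags) \hookrightarrow \F(\Wa, S)$ via the localization pairing $\langle w, \xi \rangle = \xi(w)$, it suffices to test the claimed identity after pairing with each $T$-fixed point $t_\mu v$ for $\mu \in Q^\vee$ and $v \in \Wf$.

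Concretely, I would compute the left-hand side by first unfolding the definition of the $\bullet$-action to get $\langle t_\mu v, a \bullet \xi \rangle = \langle t_\mu v a, \xi \rangle$. Applying the cohomological version of Proposition \ref{P:coprod}, with the factor $t_\mu \in \PP$ acting on the first tensor factor and $va \in \bA$ on the second, rewrites this as $\sum_{(\xi)} \langle t_\mu, \xi_{(1)} \rangle \langle v a, \xi_{(2)} \rangle$. A second application of the $\bullet$-definition converts the right factor into $\langle v, a \bullet \xi_{(2)} \rangle$.

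For the right-hand side, I would use the cohomological Lemma \ref{L:coprod} to conclude that $\theta(\xi_{(1)}) = \xi_{(1)}$, so by the cohomological version of Lemma \ref{L:wrongwayloc} the value $\langle t_\mu, \xi_{(1)} \rangle$ equals $\langle t_\mu v, \xi_{(1)} \rangle$. Likewise the cohomological Lemma \ref{L:evoneloc} gives $\langle v, a \bullet \xi_{(2)} \rangle = \langle t_\mu v, \eta(a \bullet \xi_{(2)}) \rangle$. Multiplying the two factors and using that localization at $t_\mu v$ is an $S$-algebra homomorphism turns the sum into $\langle t_\mu v, \sum_{(\xi)} \xi_{(1)} \cup \eta(a \bullet \xi_{(2)}) \rangle$, matching the left-hand side term by term. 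The specialization $a = 1$ then yields $\cup \circ (1 \otimes \eta) \circ \Delta = \id$ on $H^*_T(\flags)$, because $1 \bullet \xi = \xi$.

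There is no real obstacle here beyond verifying that the four structural ingredients transfer from $K$-theory to cohomology; their proofs use only the localization pairing at $T$-fixed points and the behavior of $\theta$, $\eta$ under that pairing, none of which is specific to the multiplicative formal group law. Indeed, the only reason the $K$-theoretic proof used any $K$-theoretic input was through the pairing $\langle \cdot, \cdot \rangle$ and the projection formula \eqref{E:projectionformula}, both of which have direct analogues in equivariant cohomology.
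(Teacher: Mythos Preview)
Your proposal is correct and is precisely the argument the paper intends: the paper gives no separate proof of this cohomological proposition, instead relying on the fact that Lemmas \ref{L:wrongwayloc}, \ref{L:coprod}, \ref{L:evoneloc} and Proposition \ref{P:coprod} hold in cohomology, so that the proof of Proposition \ref{P:coprodbulletK} carries over verbatim. Your write-up reproduces that localization computation line for line.
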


Lemmas \ref{L:kappaloc},\ref{L:endo}, and Propositions \ref{P:theta}, \ref{P:eta}, \ref{P:kappa} hold in cohomology. 

\subsection{Action of $\bA$ on tensor products}
Equation \eqref{E:deltaA} is replaced by
\begin{align}\label{E:HdeltaA}
	\Delta(A_i) &= A_i \otimes 1 + s_i \otimes A_i = 1 \otimes A_i + A_i \otimes s_i
\end{align}
Lemma \ref{L:cupproduct} holds with no change in cohomology.

\subsection{Tensor product decomposition of $H^*_T(\flags)$}
The left $\bA$-module structures via $\cdot$ on $H^*_T(\gr)$ and $H^*_T(G/B)$ give a left $\bA$-module structure on $H_T^*(\gr) \otimes_S H_T^*(G/B)$.
\begin{thm}\label{T:Htensor}
There is an $S$-algebra isomorphism
\begin{align}
\label{E:Htensormap}
  H_T^*(\gr) \otimes_S H_T^*(G/B) &\cong H_T^*(\flags) \\
\label{E:Htensormaponelements}
  a \otimes b& \mapsto p^*(a) \cup \ev_1^*(b)
\end{align}
with componentwise multiplication on the tensor product. This map is also an isomorphism of left $\bA$-modules under the $\cdot$ action.
\end{thm}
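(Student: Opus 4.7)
The plan is to follow the proof of Theorem \ref{T:tensor} essentially verbatim, replacing the Demazure product by length-additive products and omitting the sign corrections, since the cohomological nilHecke generators satisfy $A_i^2 = 0$ rather than $T_i^2 = -T_i$. Because $p^*$ and $\ev_1^*$ are $S$-algebra homomorphisms, the map \eqref{E:Htensormaponelements} is automatically an $S$-algebra homomorphism; the content of the theorem is therefore bijectivity together with left $\bA$-equivariance under the $\cdot$ action.

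The main intermediate step is to establish the cohomological analogue of Theorem \ref{T:affineschub}:
$$
\xi^w \;=\; \sum_{\substack{(u,v) \in \Wa \times \Wf \\ w \doteq uv}} p^*(F^u) \cup \ev_1^*(\xi^v_{G/B}).
$$
I would derive this by combining two ingredients. First, a cohomological coproduct formula $\Delta(\xi^w) = \sum_{w \doteq uv} F^u \otimes \xi^v$, proved exactly as in Proposition \ref{P:coprod1} except that the product $A_x A_v$ in $\bA$ equals $A_{xv}$ when $\ell(xv) = \ell(x) + \ell(v)$ and zero otherwise, so that only length-additive pairs contribute and no signs appear. Second, Proposition \ref{P:coprodbullet} specialized at $a = 1$, yielding $\xi^w = \sum_{(\xi^w)} \xi^w_{(1)} \cup \eta(\xi^w_{(2)})$. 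Terms with $v \in \Wa \setminus \Wf$ vanish because $\eta = \ev_1^* \circ q^*$ annihilates $\xi^v$ in that range, by the cohomological analogue of \eqref{E:qschub}.

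With this expansion in hand I would conclude as in the short proof of Theorem \ref{T:tensor}: the image of the $S$-basis $\{F^u \otimes \xi^v_{G/B} \mid (u,v) \in \Wz \times \Wf\}$ of $H_T^*(\gr) \otimes_S H_T^*(G/B)$ under \eqref{E:Htensormaponelements} is the collection $\{p^*(F^u) \cup \ev_1^*(\xi^v_{G/B})\}_{(u,v) \in \Wz \times \Wf}$; this collection is unitriangular with respect to the Schubert basis of $H^*_T(\flags)$, with the leading term of $\xi^w$ corresponding to the unique length-additive factorization $w = w^0 w_\Wf$ ($w^0 \in \Wz$, $w_\Wf \in \Wf$), where $F^{w^0}$ is the Schubert class of $H^*_T(\gr)$ indexed by $w^0$. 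Unitriangularity forces invertibility over $S$, so \eqref{E:Htensormap} is an $S$-module, and therefore $S$-algebra, isomorphism. The left $\bA$-equivariance under $\cdot$ is then immediate from the cohomological version of Lemma \ref{L:cupproduct} together with the fact that $p^*$ and $\ev_1^*$ are left $\bA$-module homomorphisms under $\cdot$.

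The only real obstacle is to verify the length-additive, signless coproduct formula carefully; fortunately the combinatorics of $A_x A_v$ is much simpler than the Demazure combinatorics of $T_x T_v$, so once this step is in place, the remainder is a direct transcription of the $K$-theoretic argument.
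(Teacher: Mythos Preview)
Your proposal is correct and follows essentially the same route as the paper: the cohomological proof is a direct transcription of the proof of Theorem~\ref{T:tensor}, using Proposition~\ref{P:Hcoprod1} and Proposition~\ref{P:coprodbullet} (with $a=1$) to obtain Theorem~\ref{T:H affine coproduct}, and then invoking unitriangularity of $\{p^*(F^u)\cup\ev_1^*(\xi^v_{G/B}):(u,v)\in\Wz\times\Wf\}$ against the Schubert basis together with Lemma~\ref{L:cupproduct} for $\bA$-equivariance. Your observation that $A_xA_v$ is either $A_{xv}$ or zero is precisely why the Demazure product and signs disappear in cohomology.
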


\subsection{Schubert bases}
The $S$-algebras $H^*_T(\flags)$, $H^*_T(\gr)$, and $H^*_T(G/B)$ have equivariant Schubert bases $\{\xi^x \mid x \in \Wa\}$, $\{\xi^u_{\gr} \mid u \in \Wz\}$, and $\{\xi^w_{G/B} \mid w \in \Wf\}$ respectively.  Equations \eqref{E:pschub} and \eqref{E:qschub} hold for cohomology Schubert classes.

The analogue of Proposition \ref{P:dotbullet} is as follows.

\begin{prop}\label{P:Hdotbullet} \cite[Chapter 4, Section 3.3]{book}
For $\la\in X\subset S$ and $A_i$ for $i\in\Iaf$, on the Schubert basis element $\xi^w \in H_T^*(\flags)$ for $w\in\Wa$, we have:
\begin{align}
	\label{E:HAleft}
	A_i \cdot \xi^w &= 
	\begin{cases}
		\xi^{s_iw} & \text {if $s_iw<s$} \\
		0 & \text {otherwise.}
	\end{cases} \\
	\label{E:HAright}
	A_i \bullet \xi^w &= 
	\begin{cases}
		\xi^{ws_i} & \text {if $ws_i<w$} \\
		0 & \text {otherwise.}
	\end{cases} \\
	\label{E:HSleft}
	\la \cdot \xi^w &= \la \xi^w \\
	\label{E:HSright}
	\la \bullet \xi^w &= c_1(\cL_\la) \cup \xi^w  .
\end{align}
\end{prop}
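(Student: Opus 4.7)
The plan is to verify each of the four identities directly, using the defining duality $\xi^v(A_w) = \delta_{v,w}$ of the equivariant Schubert basis (the cohomology analogue of \eqref{E:psidef}), together with the explicit $\cdot$ and $\bullet$ action formulas recalled in this section. The proofs parallel those of Proposition \ref{P:dotbullet} in $K$-theory, and the underlying reference is \cite[Chapter 4, Section 3.3]{book}.

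Identities \eqref{E:HSleft} and \eqref{E:HSright} are essentially immediate. For \eqref{E:HSleft}, the $\cdot$ action of $q \in S$ on $H^*_T(\flags)$ is defined by $(q \cdot \xi)(a) = q \cdot \xi(a)$, so $\la \cdot \xi^w$ coincides with the pointwise product $\la \xi^w$ in $\F(\Wa,S)$, which is the class $\la \xi^w$ in $H^*_T(\flags)$. Identity \eqref{E:HSright} is exactly Lemma \ref{L:HXbullet}.

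For the $\bullet$ identity \eqref{E:HAright}, I would compute
\[
(A_i \bullet \xi^w)(A_u) = \xi^w(A_u A_i)
\]
using $(a \bullet \xi)(b) = \xi(ba)$. Invoking the nilCoxeter product $A_u A_i = A_{us_i}$ if $us_i > u$ and $0$ otherwise, together with $\xi^w(A_x) = \delta_{w,x}$, the right-hand side equals $\delta_{u, ws_i}$ when $us_i > u$ and $0$ otherwise. When $ws_i < w$, the unique nonzero contribution is at $u = ws_i$, identifying $A_i \bullet \xi^w$ with $\xi^{ws_i}$; when $ws_i > w$, every contribution vanishes, giving $A_i \bullet \xi^w = 0$.

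For the $\cdot$ identity \eqref{E:HAleft}, the strategy is analogous but more delicate. Evaluate
\[
(A_i \cdot \xi^w)(A_v) = A_i \cdot \xi^w(s_i A_v) + \xi^w(A_i A_v),
\]
expand $s_i A_v$ inside $\bA_Q$ using $s_i = 1 + \alpha_i A_i$ to get $s_i A_v = A_v + \alpha_i A_i A_v$, and combine the resulting scalar contributions using the divided-difference action of $A_i$ on $S$ given by \eqref{E:AonX} and \eqref{E:Aonproduct}. Separating cases according to whether $s_i v > v$ or $s_i v < v$, a careful bookkeeping shows the total evaluates to $\delta_{v,s_iw}$ when $s_iw < w$ and to $0$ otherwise, exhibiting $A_i \cdot \xi^w$ as $\xi^{s_iw}$ or $0$ accordingly. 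This last step is the main obstacle: one must be careful to cancel the contributions arising from $A_i$ acting as a divided difference on $S$ against those coming from the nilCoxeter composition inside $\bA$, as both naively produce a $\delta_{s_i v, w}$ term. The calculation is the cohomological specialization of the $K$-theoretic one in Proposition \ref{P:dotbullet}, for which \cite[Lemma 2.2]{LSS} provides the template.
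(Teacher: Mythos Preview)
The paper gives no self-contained proof here: the proposition is simply quoted from \cite[Chapter~4, Section~3.3]{book}, and even the $K$-theoretic analogue (Proposition~\ref{P:dotbullet}) is handled entirely by citing \cite[Lemma~2.2]{LSS} together with the definition and Lemma~\ref{L:Xbullet}. Your proposal supplies exactly the standard verification those references carry out --- \eqref{E:HSleft} from the definition of the $\cdot$ action of $S$, \eqref{E:HSright} from Lemma~\ref{L:HXbullet}, and \eqref{E:HAright} via the duality computation $(A_i\bullet\xi^w)(A_u)=\xi^w(A_uA_i)$ combined with the nilCoxeter relations, which is the cohomological form of \cite[Lemma~2.2]{LSS}. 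For \eqref{E:HAleft} your pairing outline is likewise the intended method; one small correction is that the convention \eqref{E:AonX} forces $A_i=\alpha_i^{-1}(1-s_i)$, so $s_i=1-\alpha_i A_i$ rather than $1+\alpha_i A_i$, and this sign feeds into the cancellation you describe.
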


\subsection{Equivariant affine Stanley classes} 

\begin{thm}[\cite{Pet} \cite{Lam:kschur} \cite{book}] \label{thm:jzw}
For every $w\in \Wz$, $j_w = \tocentralizer(\xi_w^{\gr})$ is the unique element of $\PP$ of the form 
\begin{align}\label{E:Hjbasis}
j_w = A_w + \sum_{z\in \Wa\setminus\Wz} j^z_w A_z
\end{align}
for some $j^z_w\in S$.
\end{thm}

\begin{rem} By \cite{Pet} \cite{LS:quantumaffine} the coefficients $j^z_w$ are equivariant Gromov-Witten invariants for $G/B$.
\end{rem}

For $w\in\Wa$ the
equivariant affine Stanley class $F^w \in H_T^*(\gr)$ is defined by
\begin{align}\label{E:Haffstan}
	  F^w := \varphi^*(\xi^w)
\end{align}
and as usual we also consider $F^w$ an element of $H_T^*(\flags)$.

\begin{lem}\label{L:HFcoeffs} For $w\in \Wa$, we have
\begin{align*}
	F^w &= \sum_{u\in \Wz} j^w_u \,\xi^u_{\gr}
\end{align*}
where the $j^w_u$ are defined in \eqref{E:Hjbasis}.
\end{lem}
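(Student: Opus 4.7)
The plan is to transport the $K$-theoretic argument for Lemma \ref{L:Fcoeffs} verbatim to cohomology. I would show that both sides of the claimed identity have the same pairing against every element of the homology Schubert basis $\{\xi_u^{\gr}\}_{u \in \Wz}$ of $H_*^T(\gr)$, and conclude by duality. The right-hand side pairs trivially: by $S$-duality of the Schubert bases,
$$
\pair{\xi_u^{\gr}}{\sum_{v \in \Wz} j^w_v\, \xi^v_{\gr}}_{\gr} = j^w_u.
$$

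For the left-hand side, I would apply the cohomology projection formula $\pair{a}{\varphi^*(b)}_{\gr} = \pair{\varphi_*(a)}{b}_{\flags}$ (the straightforward analogue of \eqref{E:projectionformula}) together with the definition $F^w = \varphi^*(\xi^w)$ to get
$$
\pair{\xi_u^{\gr}}{F^w}_{\gr} = \pair{\xi_u^{\gr}}{\varphi^*(\xi^w)}_{\gr} = \pair{\varphi_*(\xi_u^{\gr})}{\xi^w}_{\flags}.
$$
By Theorem \ref{T:Hcentralizer}, $\varphi_*(\xi_u^{\gr})$ is identified with $j_u \in \PP \subset \bA$, and the theorem immediately preceding the lemma expands $j_u = A_u + \sum_{z \in \Wa \setminus \Wz} j^z_u A_z$. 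Pairing this expansion against $\xi^w$ and using the defining duality $\pair{A_v}{\xi^w}_{\flags} = \delta_{v,w}$ (the cohomological counterpart of \eqref{E:psidef}) extracts precisely the coefficient $j^w_u$.

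Since the two pairings agree on an $S$-basis of $H_*^T(\gr)$, the proposed expansion of $F^w$ follows. There is no substantive obstacle here: every ingredient (Theorem \ref{T:Hcentralizer}, the characterization of $j_u$ via \eqref{E:Hjbasis}, the Schubert duality $\pair{A_v}{\xi^w}_{\flags} = \delta_{v,w}$, and the projection formula) is already in place, and the proof is a line-by-line cohomological rewriting of the $K$-theoretic argument.
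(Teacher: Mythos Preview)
Your proposal is correct and follows exactly the same approach as the paper: in Section~\ref{sec:cohom} the paper gives no separate proof for Lemma~\ref{L:HFcoeffs}, since it is the line-by-line cohomological transcription of the proof of Lemma~\ref{L:Fcoeffs}, which is precisely what you have written.
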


\begin{prop}\label{P:Hcoprod1} For $w\in \Wa$, we have $\Delta(\xi^w) = \sum_{w \doteq w_1w_2} F^{w_1} \otimes \xi^{w_2}$.

\end{prop}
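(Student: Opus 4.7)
The plan is to transport the proof of Proposition \ref{P:coprod1} from $K$-theory to cohomology, exploiting the fact that (by the cohomological analogue of Proposition \ref{P:coprod}, which itself rests on Theorem \ref{T:Hcentralizer} and the projection formula $\varphi^* \circ p^* = \id$) the coproduct $\Delta$ is dual under the pairing to the multiplication map $\PP \otimes_S \bA \to \bA$. Therefore it is enough to compute the matrix of this multiplication with respect to the basis $\{j_u \otimes A_v : u \in \Wz,\ v \in \Wa\}$ of $\PP \otimes_S \bA$ and the basis $\{A_w : w \in \Wa\}$ of $\bA$, and then transpose.

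For the multiplication, expand $j_u$ using \eqref{E:Hjbasis} and apply the nilCoxeter relation
$$
A_x A_v = \begin{cases} A_{xv} & \text{if } \ell(xv) = \ell(x) + \ell(v), \\ 0 & \text{otherwise,}\end{cases}
$$
to obtain
$$
j_u A_v \;=\; \sum_{x \in \Wa} j^x_u \, A_x A_v \;=\; \sum_{w \in \Wa} \Bigl(\sum_{\substack{x \in \Wa \\ w \doteq xv}} j^x_u\Bigr) A_w.
$$
Unlike the $K$-theoretic analogue, no signs appear here, because $A_i^2 = 0$ rather than $T_i^2 = -T_i$: each non-length-additive product simply vanishes instead of surviving with a factor $-1$. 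This is precisely why the Demazure product $*$ in \eqref{E:coprod1} gets replaced by the length-additive product $\doteq$.

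Dualizing this matrix and using Lemma \ref{L:HFcoeffs} to package the inner sums $\sum_{u \in \Wz} j^{w_1}_u \xi^u_{\gr}$ as affine Stanley classes $F^{w_1}$, we obtain
$$
\Delta(\xi^w) \;=\; \sum_{\substack{(w_1,w_2) \in \Wa\times\Wa \\ w \doteq w_1 w_2}} F^{w_1} \otimes \xi^{w_2},
$$
as claimed. The main obstacle here is not really an obstacle but rather a bookkeeping task: one must verify that all the infrastructure of Sections \ref{sec:nilHecke}--\ref{sec:Schubert} — the small-torus nilHecke bimodule structure on $H^*_T(\flags)$, the Peterson centralizer isomorphism, and the wrong-way pullback $\varphi^*$ with its projection formula — carries over to cohomology, which is exactly what the opening of Section \ref{sec:cohom} accomplishes. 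Once that is in place, the argument is a line-by-line transcription of the $K$-theoretic proof with the simplifications noted above.
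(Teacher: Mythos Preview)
Your proposal is correct and follows essentially the same approach as the paper's proof of Proposition~\ref{P:coprod1}: compute the matrix of the multiplication map $\PP \otimes_S \bA \to \bA$ in the bases $\{j_u \otimes A_v\}$ and $\{A_w\}$, then transpose to obtain $\Delta$ in Schubert coordinates, packaging the result via Lemma~\ref{L:HFcoeffs}. Your observation that the nilCoxeter relation $A_i^2=0$ eliminates the signs present in the $K$-theoretic version is exactly the simplification the paper intends when it says the cohomology proofs are obtained ``with the obvious modifications.''
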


\subsection{Coproduct formula for affine Schubert classes}
\begin{thm} \label{T:H affine coproduct} 
For $w\in\Wa$, we have
	\begin{align*}
		\xi^w &= \sum_{\substack{(w_1,w_2)\in \Wa\times\Wf \\ w_1w_2 \doteq w}} F^{w_1} \cup \ev_1^*(\xi^{w_2}_{G/B}). 
	\end{align*}
\end{thm}

%

\subsection{Formulae for Schubert divisors}
\begin{cor} \label{C:Haffschubspecial} 
For $i\in\Iaf$ we have
\begin{align}\label{E:Haffschubreflection}
	\xi^{s_i} &= 
	\begin{cases}	
		F^{s_0} &\text {if $i=0$} \\
		F^{s_i} + \ev_1^*(\xi^{s_i}_{G/B}) & \text {otherwise.}
	\end{cases}
\end{align}
\end{cor}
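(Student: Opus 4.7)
The plan is to apply the coproduct formula from Theorem \ref{T:H affine coproduct} directly to $w = s_i$ and enumerate all length-additive factorizations $w_1 w_2 \doteq s_i$ with $w_1 \in \Wa$ and $w_2 \in \Wf$. Since $\ell(s_i) = 1$, length-additivity forces exactly one of $w_1, w_2$ to equal the identity and the other to equal $s_i$, so the sum on the right side of Theorem \ref{T:H affine coproduct} has at most two terms.

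First I would handle the case $i = 0$. Here $s_0 \notin \Wf$, so the factor belonging to $\Wf$ must be $\id$, and the only valid pair is $(w_1, w_2) = (s_0, \id)$. Since $\xi^{\id}_{G/B} = 1$ and $\ev_1^*$ is a unital ring homomorphism, Theorem \ref{T:H affine coproduct} collapses to $\xi^{s_0} = p^*(F^{s_0})$.

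Next, for $i \in \Ifin$, we have $s_i \in \Wf$, so both $(\id, s_i)$ and $(s_i, \id)$ are valid factorizations of length $1$. Using that $\xi^{\id} = 1$, so $F^{\id} = \varphi^*(\xi^{\id}) = 1$, together with $\xi^{\id}_{G/B} = 1$ and the fact that $p^*$ and $\ev_1^*$ are unital ring homomorphisms, the formula of Theorem \ref{T:H affine coproduct} reduces to
\[
\xi^{s_i} = p^*(F^{s_i}) + \ev_1^*(\xi^{s_i}_{G/B}).
\]

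There is no real obstacle: the argument is essentially a bookkeeping exercise once Theorem \ref{T:H affine coproduct} is in hand. It is worth emphasizing that the cohomological formula is strictly simpler than the $K$-theoretic analogue (Corollary \ref{C:affschubspecial}), because we are summing over length-additive factorizations rather than Demazure factorizations of $s_i$. This eliminates the cross term analogous to $-p^*(G^{s_i}) \cup \ev_1^*(\psi^{s_i}_{G/B})$ that appeared in $K$-theory, since the Demazure factorization $s_i = s_i * s_i$ has no length-additive counterpart.
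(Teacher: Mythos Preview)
Your proof is correct and is exactly the argument the paper intends: the corollary is stated without proof as an immediate specialization of Theorem~\ref{T:H affine coproduct} to $w=s_i$, and your enumeration of the one or two length-additive factorizations (together with the triviality $F^{\id}=1$, $\xi^{\id}_{G/B}=1$) is precisely that specialization.
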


\begin{prop}\label{P:Hreflectionclass} For all $i\in\Iaf$ and $\la\in Q^\vee$ we have
	\begin{align}
		\label{E:Freflection}
		F^{s_i} &= \level(\La_i) \, F^{s_0}.
	\end{align}
\end{prop}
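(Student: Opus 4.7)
The plan is to adapt the proof of the $K$-theoretic analogue, Proposition~\ref{P:reflectionclass}, to cohomology. The multiplicative identity $1-p^*(G^{s_i}) = (1-p^*(G^{s_0}))^{\level(\La_i)}$ should become the additive identity stated here, which is exactly what one expects under the usual correspondence between Chern classes of line bundles and $1-e^{(\cdot)}$.

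First I would invoke the standard formula for the localization of an affine Schubert divisor class: working in the affine-torus equivariant cohomology $H^*_{\Taf}(\flags)$, one has
\[
\xi^{s_i}_{\Taf}(w) \;=\; \La_i - w \cdot \La_i \qquad \text{for all } w \in \Wa,
\]
which is the cohomological counterpart of the $K$-theoretic formula $\psi^{s_i}_{\Taf}(w)=1-e^{\La_i-w\cdot\La_i}$ used in the proof of Proposition~\ref{P:reflectionclass}. Combining this with the cohomological version of Lemma~\ref{L:wrongwayloc} gives
\[
p^*(F^{s_i})(t_\mu v) \;=\; \xi^{s_i}(t_\mu) \;=\; \cl\bigl(\La_i - t_\mu \cdot \La_i\bigr)
\]
for every $\mu\in Q^\vee$ and $v\in\Wf$, since the fixed point $t_\mu v$ maps to $t_\mu$ under the wrong-way composition.

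The main calculation is to evaluate $\cl(\La_i - t_\mu\cdot\La_i)$ explicitly. I would decompose $\La_i = \af(\omega_i) + \level(\La_i)\La_0$ using the definitions in Section~\ref{S:nilHecke}. Since $\af(\omega_i)$ is level-zero, $t_\mu$ acts on it by subtracting a multiple of $\delta$; on $\La_0$, a standard affine Weyl group computation (cf.~\cite{Kac}) shows that $t_\mu\cdot\La_0$ differs from $\La_0$ by an element of $\Xfin$ (the image of $\mu$ under the normalized invariant form) plus a $\delta$-term. Applying $\cl$ kills every $\delta$, leaving a contribution linear in $\mu$ and proportional to $\level(\La_i)$. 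Specializing the same computation to $i=0$ (where $\level(\La_0)=1$) yields
\[
\cl(\La_i - t_\mu\cdot\La_i) \;=\; \level(\La_i)\,\cl(\La_0 - t_\mu\cdot\La_0).
\]

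Combining the two displays shows that $p^*(F^{s_i})$ and $\level(\La_i)\,p^*(F^{s_0})$ agree at every $T$-fixed point $t_\mu v$ of $\flags$; equality in $H^*_T(\flags)$ then follows from the injectivity of the GKM localization map. The only real obstacle is the bookkeeping with translations acting on affine weights, but this is entirely parallel to the $K$-theoretic argument, with sums replacing products once $1-e^{(\cdot)}$ is replaced by its linear first-order analogue.
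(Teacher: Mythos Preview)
Your proposal is correct and is precisely the cohomological adaptation of the paper's proof of Proposition~\ref{P:reflectionclass}: the paper does not give a separate argument for Proposition~\ref{P:Hreflectionclass}, as Section~\ref{sec:cohom} only records the modifications needed for the $K$-theoretic proofs. Your key step, that $\La_i-\level(\La_i)\La_0=\af(\omega_i)$ is level zero so $t_\mu$ moves it only by a multiple of $\delta$ (killed by $\cl$), is exactly the identity the paper uses in the $K$-theoretic computation, now read additively rather than multiplicatively.
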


\subsection{$\bullet$-action on affine Schubert classes}

Proposition \ref{P:nilHeckeeta} holds with $A_0$ replacing $y_0$ and $A_{-\theta} := -\theta^{-1}(1 - s_\theta) $ replacing $y_{-\theta}$.

%

\begin{thm}\label{T:HnilHecketheta}
For $\la\in X\subset S$, $i\in\Iaf$, and $u \in \Wz$, we have
\begin{enumerate}
\item
$\la \bullet \xi^u = c_1(\cL_\la) \cup \xi^u$,
\item
$A_i \bullet \xi^u =0$ and $s_i\bullet \xi^u=\xi^u$ if $i \in I$,
\item For $u\in\Wz\setminus\{\id\}$ 
\begin{align*}
  A_0 \bullet \xi^u &= \xi^{us_0} 
  = \sum_{\substack{(x_1,x_2)\in\Wa\times\Wf \\ x_1x_2\doteq us_0 }}
	\theta(\xi^{x_1}) \cup \eta(\xi^{x_2}).
\end{align*}
\item For $u\in\Wz\setminus \{\id\}$
\begin{align*}
	s_0\bullet \xi^u &= 
	\xi^u + \sum_{\substack{(x_1,x_2)\in\Wa\times\Wf \\ x_1x_2 \doteq us_0 }}
	\theta(\xi^{x_1}) \cup \eta(-\cl(\alpha_0)\bullet \xi^{x_2}).
\end{align*}
\end{enumerate}
\end{thm}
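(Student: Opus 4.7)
The plan is to mirror the proof of the $K$-theoretic Theorem \ref{T:nilHecketheta}, using Proposition \ref{P:Hdotbullet} in place of Proposition \ref{P:dotbullet} and the cohomological nilHecke identity $s_i = 1 - \alpha_i A_i$ in place of the $K$-theoretic $s_i = e^{\alpha_i} + (1-e^{\alpha_i})y_i$. Part~(1) is immediate from Lemma \ref{L:HXbullet}. For part~(2), since $u \in \Wz$ is a minimal right $\Wf$-coset representative, $us_i > u$ for every $i \in I$, so \eqref{E:HAright} gives $A_i \bullet \xi^u = 0$; then the identity $s_i = 1 - \alpha_i A_i$ (which holds in $\bA$ when $i \in I$ because $\alpha_i \in \Xfin \subset S$) yields $s_i \bullet \xi^u = \xi^u - \alpha_i \bullet 0 = \xi^u$. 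For part~(3), any reduced word for $u \in \Wz\setminus\{\id\}$ must terminate in $s_0$, for if it ended in $s_j$ with $j \in I$ then $us_j < u$ would contradict $u \in \Wz$; hence $us_0 < u$ and \eqref{E:HAright} gives $A_0 \bullet \xi^u = \xi^{us_0}$, and the second equality in~(3) follows by applying Theorem \ref{T:H affine coproduct} to $w = us_0$.

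Part~(4) rests on the small-torus identity $s_0 = 1 - \cl(\alpha_0) A_0$ inside $\bA$, which I would verify by checking that both sides agree as operators on $S$: by \eqref{E:AonX} one has $A_0(\lambda) = \pair{\alpha_0^\vee}{\lambda}\,\id$, and with $\cl(\alpha_0) = -\theta$ this gives $(1 - \cl(\alpha_0) A_0)(\lambda) = \lambda + \theta\,\pair{\alpha_0^\vee}{\lambda} = \lambda - \pair{\theta^\vee}{\lambda}\,\theta = s_\theta\lambda$, matching the level-zero action of $s_0$ on $\Xfin$. Combining this with part~(3),
\begin{align*}
s_0 \bullet \xi^u \;=\; \xi^u - \cl(\alpha_0) \bullet (A_0 \bullet \xi^u) \;=\; \xi^u - \cl(\alpha_0) \bullet \xi^{us_0}.
\end{align*}
I would then expand $\xi^{us_0}$ via Theorem \ref{T:H affine coproduct} and distribute $\cl(\alpha_0) \bullet$ onto the second tensor factor: by Lemma \ref{L:HXbullet} this operator is cup product with $c_1(\cL_{\cl(\alpha_0)})$, hence passes through $\theta(\xi^{x_1}) \cup$, and by Proposition \ref{P:eta}(4) it commutes with $\eta$. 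Absorbing the overall minus sign into the argument of $\eta$ produces the stated formula.

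The main obstacle will be the small-torus bookkeeping around the affine simple root $\alpha_0$, specifically establishing $s_0 = 1 - \cl(\alpha_0) A_0$ inside $\bA$ rather than inside an affine-torus nilHecke over $\Sym(\Xaf)$, where the identity $s_i = 1 - \alpha_i A_i$ is the standard one. Once this algebraic identity is in place, the remainder is formal: the $\bullet$-action of $S$ is cup product by Lemma \ref{L:HXbullet}, it commutes with $\eta$ by Proposition \ref{P:eta}(4), and Theorem \ref{T:H affine coproduct} supplies the decomposition needed to read off the final answer.
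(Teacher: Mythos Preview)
Your proposal is correct and follows essentially the same approach as the paper: the paper's proof of the $K$-theoretic Theorem \ref{T:nilHecketheta} is the one-line remark that the formulae follow from Proposition \ref{P:dotbullet} together with the identity $s_0 = e^{-\theta} + (1-e^{-\theta})y_0$, and you carry out precisely the cohomological analogue using Proposition \ref{P:Hdotbullet} and $s_0 = 1 - \cl(\alpha_0)A_0$, invoking Theorem \ref{T:H affine coproduct} for the decomposition in parts (3) and (4). One small remark: your ``main obstacle'' is not really an obstacle, since the identity $s_0 = 1 - \cl(\alpha_0)A_0$ holds in $\bA$ by definition of $A_0$ (it is the small-torus analogue of $A_i = \alpha_i^{-1}(1 - s_i)$), so checking it on $S$ is a consistency check rather than a proof.
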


%
%

\begin{rem}
The $\cdot$ and $\bullet$ actions of $\bA$ make $H_T^*(\thickaffgr) \otimes_S H_T^*(G/B)$ into a left $(\bA\times\bA)$-module such that the map \eqref{E:Htensormap} is a left $(\bA\times\bA)$-module isomorphism.
\end{rem}

\subsection{Recursion}
The affine Schubert classes in the tensor product $H^*_T(\gr) \otimes_S H^*_T(G/B)$ are determined by the following recursion.
\begin{enumerate}
\item $\xi^u = \xi^u_\gr \otimes 1$ for $u \in \Wz$, and
\item For all $i\in\Iaf$ $$A_i \bullet \xi^w = 
	\begin{cases}
		\xi^{ws_i} & \text {if $ws_i<w$} \\
		0 & \text {otherwise.}
	\end{cases}$$
\end{enumerate}
Here, the operator $A_i$ acts on $H^*_T(\gr) \otimes_S H^*_T(G/B)$ by 
$A_i \bullet (\zeta \otimes \psi) =\zeta \otimes (A_i \bullet \psi)$ if $i \neq 0$ and $A_0 \bullet (\zeta \otimes \psi)$ is computed via \eqref{E:HdeltaA} and Theorem \ref{T:HnilHecketheta}.

\section{Examples}\label{sec:examples}

\subsection{Type $A$ in cohomology}
Letting $G = \SL(n)$, we now consider the affine Schubert polynomials \cite{Lee}.  Recall the isomorphism $H^*(\flags_G) \cong H^*(\gr_G) \otimes_{H^*(\pt)} H^*(G/B)$.  By \cite{Lam:kschur}, the cohomology $H^*(\gr_G)$ is isomorphic to $\Lambda/I_n$ where $\Lambda$ is the ring of symmetric functions and $I_n$ is the ideal $\langle m_\lambda \mid \lambda_1\geq n\rangle$ in $\Lambda$. Also, we have the classical Borel isomorphism $H^*(G/B)= \Z[x_1,\ldots,x_n] / \langle e_j(x_1,\ldots,x_n)\mid j\geq 1\rangle$ where $e_j(x_1,\ldots,x_n)$'s are elementary symmetric functions. Hence we have
$$H^*(\flags_G)\cong \Lambda/I_n \otimes_{\Z} \Z[x_1,\ldots,x_n] / \langle e_j(x_1,\ldots,x_n)\mid j\geq 1\rangle.$$ 
We list some affine Schubert polynomials for $n=3$, indexed by $w \in \tS_n$, the affine symmetric group.
\begin{center}
\begin{tabular}{|c||c|c|c|c|c|c|c|}
\hline
$w$ & 1 & $s_0$ & $s_1$ &$s_2$ &$s_1s_0$ &$s_2s_1$ & $s_2s_1s_0$ \\
\hline
$\asp_w$ & 1 & $h_1$ &$ h_1 + x_1$ & $h_1 + x_1 + x_2$ & $h_2$ & $h_2+h_1 x_1 + x_1^2$ & $m_{2,1}+m_{1,1,1} $\\
\hline
\end{tabular}
\end{center}
%

The polynomial $\asp_{s_2s_1}$ can be computed in a number of different ways. First, we can start from $\asp_{s_2s_1s_0}$ which is the same as the affine Schur function indexed by $s_2s_1s_0$, and use the monomial expansion of the affine Schur functions \cite{Lam:affstan}. Then one can act with the divided difference operator $A_i \bullet$ to obtain $\asp_{s_2s_1}$. The action of $A_i \bullet$ is explicitly given in \cite[Definition 1.1]{Lee}. 

On the other hand, using the coproduct formula (Theorem~\ref{T:H affine coproduct}) directly give $\asp_{s_2s_1}$:
\begin{align*} \asp_{s_2s_1}&= F_{s_2s_1}+ F_{s_2} \mathfrak{S}_{s_1} + \mathfrak{S}_{s_2s_1}
= h_2+ h_1 x_1 + x_1^2
\end{align*}
where $F_w$ is the affine Stanley symmetric function,  the non-equivariant version of $F^w$ in Section 5, and $\mathfrak{S}_v(x)$ is the Schubert polynomial. Using the coproduct formula together with monomial expansions of $F_w$ \cite{Lam:affstan} and $\mathfrak{S}_v(x)$ \cite{BJS} provides the following theorem:

\begin{thm} \label{T:monomialpositive}Affine Schubert polynomials are monomial-positive.
\end{thm}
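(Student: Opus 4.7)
The plan is to apply Theorem~\ref{T:H affine coproduct} in non-equivariant type $A$ cohomology to decompose the affine Schubert polynomial $\asp_w$, and then to appeal to two classical monomial-positivity results for the factors appearing in that decomposition.

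Specifically, the non-equivariant specialization of Theorem~\ref{T:H affine coproduct} (equivalently the cohomological half of Theorem~\ref{T:main}) gives
$$
\asp_w \;=\; \sum_{\substack{(u,v)\in \Wa\times\Wf\\ uv\,\doteq\, w}} F_u \cdot \mathfrak{S}_v(x_1,\dots,x_n),
$$
where $F_u$ is the affine Stanley symmetric function (the non-equivariant image of $F^u$) and $\mathfrak{S}_v(x)$ is the finite Schubert polynomial. This identity holds in $\Lambda\otimes \Z[x_1,\dots,x_n]$, providing the canonical lift of the class $\asp_w$ from the quotient $H^*(\flags_G)\cong (\Lambda/I_n)\otimes \Z[x_1,\dots,x_n]/\langle e_j\rangle$; the worked example $\asp_{s_2s_1}=h_2+h_1x_1+x_1^2 = F_{s_2s_1}+F_{s_2}\mathfrak{S}_{s_1}+\mathfrak{S}_{s_2s_1}$ in the text illustrates precisely this lift.

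Next I would invoke monomial positivity of each factor separately. By Lam's combinatorial formula for affine Stanley symmetric functions \cite{Lam:affstan}, one has $F_u = \sum_\lambda c^u_\lambda\, m_\lambda(y)$ with $c^u_\lambda \ge 0$ (the $c^u_\lambda$ count cyclically decreasing factorizations of $u$). By the Billey--Jockusch--Stanley theorem \cite{BJS}, each Schubert polynomial expands as $\mathfrak{S}_v(x) = \sum_\beta d^v_\beta\, x^\beta$ with $d^v_\beta \ge 0$, the $d^v_\beta$ counting compatible sequences (equivalently, reduced pipe dreams).

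Since the $y$-variables of $\Lambda$ and the $x$-variables are disjoint, each product $F_u\cdot \mathfrak{S}_v$ expands directly as $\sum_{\lambda,\beta} c^u_\lambda d^v_\beta\, m_\lambda(y)\,x^\beta$, which is manifestly monomial positive; summing over length-additive factorizations $uv\doteq w$ preserves positivity, yielding the theorem. There is no substantive obstacle in this argument: the coproduct formula of Theorem~\ref{T:H affine coproduct} does all of the structural work, and the remaining content is exactly the monomial positivity of the two families $\{F_u\}$ and $\{\mathfrak{S}_v\}$ already established in \cite{Lam:affstan} and \cite{BJS}. The only point that deserves explicit comment is the identification of the author's affine Schubert polynomial of \cite{Lee} with the canonical tensor-product representative above; once that identification is noted, monomial positivity is immediate.
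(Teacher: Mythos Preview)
Your proposal is correct and matches the paper's own argument essentially line for line: the paper derives the theorem by applying the non-equivariant coproduct formula (Theorem~\ref{T:H affine coproduct}) to write $\asp_w=\sum F_{w_1}\mathfrak{S}_{w_2}$ and then invoking the monomial expansions of $F_w$ from \cite{Lam:affstan} and of $\mathfrak{S}_v$ from \cite{BJS}. Your added remark about identifying the polynomial representative of \cite{Lee} with the tensor-product lift is a fair point to flag, though the paper leaves this implicit.
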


The same coproduct formulae hold in equivariant cohomology, with the affine double Stanley symmetric function $F^w$ \cite{LS:double} replacing $F_w$, and the double Schubert polynomial $\schub_v(x,y)$ \cite{LS:doubleSchub} replacing $\schub_v(x)$.  However, there is no combinatorially explicit formula for the equivariant affine Stanley classes $F^{w}$, see \cite[Remark 23]{LS:double}.

\subsection{Back stable limit}\label{ssec:backstable}
We explain how to obtain the coproduct formula \cite[Theorem 3.16]{LLS} for backstable Schubert polynomials from Theorem~\ref{T:H affine coproduct}.  Let $\bS_w(x) \in \Lambda \otimes_\Z \Z[x_i \mid i \in \Z]$ denote the back stable Schubert polynomial from \cite{LLS}, where $w \in S_\Z = \langle s_i \mid i \in \Z\rangle$ is an infinite permutation.  Let
$$
\phi_n:  \Lambda \otimes_\Z \Z[x_i \mid i \in \Z] \to  \Lambda/I_n \otimes_{\Z} \Z[x_1,\ldots,x_n] / \langle e_j(x_1,\ldots,x_n)\mid j\geq 1\rangle$$ 
denote the natural quotient ring homomorphism where in the second factor $x_i$ is send to $x_{i \mod n}$.  
The back stable Schubert polynomial has a unique expansion $\bS_w(x) = \sum_v a_v(x) \otimes \schub_v(x)$ where $a_v \in \Lambda$ and $\schub_v(x)$ is a finite Schubert polynomial, which may possibly involve negative letters.  By shifting $w$, we may assume that $v \in S_{>0} =  \langle s_i \mid i > 0\rangle$, so that $\schub_v(x)$ is a usual Schubert polynomial.  We will show that 
\begin{equation}\label{eq:bScoprod}
\bS_w(x) = \sum_{\substack{w \doteq uv \\ v \in S_{\neq 0}} } F_u(x) \otimes \schub_v(x)
\end{equation}
where $F_u(x)$ denotes the Stanley symmetric function and $S_{\neq 0} = \langle s_i \mid i \neq 0\rangle$.

For sufficiently large $n$, the permutation $w \in S_\Z$ gives a well-defined element of the affine symmetric group $\tS_n$, by sending $s_i$, $i \in \Z$ to $s_i$, $i \in \Z/n\Z$.  Abusing notation, we denote this element by $w \in \tS_n$ as well.  According to \cite[Theorem 10.9]{LLS}, for sufficiently large $n$, the image $\phi_n(\bS_w(x))$ is equal to the affine Schubert polynomial $\asp_w(x)$, and it is also known that the image $\phi_n(F_u(x))$ is equal to the affine Stanley symmetric function, also denoted $F_u$.

Given any nonzero $f(x) \in \Lambda \otimes_\Z \Z[x_i \mid i \in \Z]$, it is straightforward to see that for sufficiently large $n$, we must have $\phi_n(f(x)) \neq 0$.  Applying this to the difference of the two sides of \eqref{eq:bScoprod}, and using our Theorem~\ref{T:H affine coproduct}, we see that equality must hold in \eqref{eq:bScoprod}.

\subsection{Type $A$ in $K$-theory} 
Let $G = \SL(n)$.  We now consider affine versions of the Grothendieck polynomials.  We have the isomorphism $K^*(\flags_G) \cong K^*(\gr_G) \otimes_{K^*(\pt)} K^*(G/B)$ and identifications $K^*(G/B)= \Z[x_1,\ldots,x_n] / \langle e_j(x_1,\ldots,x_n)\mid j\geq 1\rangle$ \cite{LSS} and $K^*(\gr_G) \cong \widehat{\Lambda/I_n}$, where $\widehat{\Lambda/I_n}$ denotes the graded completion.  By Theorem \ref{T:affineschub}, we have the formula
\def\gp{{\mathfrak{G}}}
$$
\agp_w = \sum_{\substack{(w_1,w_2)\in \Wa\times\Wf \\ w_1*w_2=w}} (-1)^{\ell(w_1)+\ell(w_2) - \ell(w)} G^{w_1} \gp_{w_2} 
$$
in $\widehat{\Lambda/I_n} \otimes_\Z \Z[x_1,\ldots,x_n] / \langle e_j(x_1,\ldots,x_n)\mid j\geq 1\rangle$,
where $\agp_w$ is the affine Grothendieck polynomial, $G^{w_1} \in \widehat{\Lambda/I_n}$ denotes the affine stable Grothendieck polynomial \cite{LSS}, and $\gp_{w_2}$ is the Grothendieck polynomial of Lascoux and Sch\"utzenberger.  For example, let $n = 3$ and $w = s_2s_1$.   We have
\begin{align*}
\agp_{s_2 s_1} &= G^{s_2 s_1} - G^{s_2s_1} \gp_{s_1} + G^{s_2} \gp_{s_1} - G^{s_2} \gp_{s_2s_1} + \gp_{s_1s_1} \\
&=G^{s_2 s_1}(1 -  \gp_{s_1}) + G^{s_2}( \gp_{s_1} -  \gp_{s_2s_1}) + \gp_{s_2s_1}\\
&= G^{s_2 s_1}(1 -  x_1) + G^{s_2}(x_1-x_1^2) + x_1^2.
\end{align*}
From \cite[A.3.6]{LSS}, we have expansions in terms of Schur functions
\begin{align*}
G^{s_2} &= G_1^{(2)} = s_1 - s_{11} + s_{111} - s_{1111} + \cdots \\
G^{s_2s_1} &= G_2^{(2)} = s_2 - s_{21} + s_{211} - s_{2111} + \cdots 
\end{align*}
Note that the lowest degree term is $s_2+s_1 x_1+ x_1^2= \asp_{s_2s_1}$.
We plan to compare these formulae with the affine Grothendieck polynomials of Kashiwara and Shimozono \cite{KS} in future work.

\subsection{Classical type in cohomology}
The affine coproduct formula in cohomology can be applied to obtain formulas for Schubert classes in finite-dimensional flag varieties $G/B$. For classical type we compare these formulas with the Schubert class formulas of Billey and Haiman \cite{BH} for nonequivariant cohomology and those of Ikeda, Mihalcea, and Naruse \cite{IMN} for equivariant cohomology, providing retrospective insight into these formulas.

The affine coproduct formula writes an affine flag variety Schubert class as a sum of products
of affine Grassmannian Schubert classes and $G/B$ Schubert classes.
The formulas of \cite{BH} and \cite{IMN} write a $G/B$ class of type $C_n$ or $D_n$ as a sum of products of cominuscule Grassmannian Schubert classes and type $A$ flag Schubert classes.  To compare our formulae with those in \cite{BH,IMN}, we use the fact that at the bottom of the affine Grassmannian of type $C_n^{(1)}$ or $D_n^{(1)}$ there is a copy of a cominuscule Grassmannian. To perform this comparison it is necessary to use an automorphism of the affine Dynkin diagram.

Consider the affine Dynkin diagrams of types $C_n^{(1)}$ and $D_n^{(1)}$ in Figure \ref{F:dynkin}.
\begin{figure}
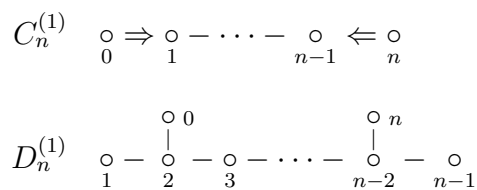

	\[
	\begin{array}{r@{\quad}l@{\qquad}l@{\quad}l}
	C_n^{(1)} & \dnode{}{0}\Rightarrow \dnode{}{1}-\cdots-\dnode{}{n-1}\Leftarrow\dnode{}{n} \\
	\\ 
	D_n^{(1)}  & \dnode{}{1}-\dnode{\ver{}{0}}{2}-\dnode{}{3}-\cdots-\dnode{\ver{}{n}}{n-2}-\dnode{}{n-1} \\
	\end{array}
	\]
	\caption{Affine Dynkin diagrams}
	\label{F:dynkin}
\end{figure}
Let $\tau$ be the affine Dynkin automorphism for type $C_n^{(1)}$ or $D_n^{(1)}$ given by $\tau(i)=n-i$ for $i\in\Iaf$.
There are two copies of the classical Weyl group $\Wf$ in $\Wa$: the usual one $W$ and $W'=W'_n=\tau(\Wf)$, which is generated by $s_j$ for $j\in \hat{I}\setminus\{n\}$
Let $G$ and $G'$ denote the subgroups of the corresponding loop group (or affine Kac-Moody group) with Weyl groups $W$ and $W'$ respectively, and let $G/B$ and $G'/B'$ be the two finite-dimensional flag varieties (either the symplectic flag variety or the orthogonal flag variety).  Finally, note that the subgroup of $\Wa$ generated by $s_j$ for $j\in \hat{I} \setminus\{0,n\}$ is isomorphic to the type $A_{n-1}$ Weyl group $W_{A_{n-1}}$.

For $w \in W'$, if we have $w \doteq w_1 w_2$ for $(w_1,w_2)\in \Wa \times W$, then $(w_1,w_2) \in W'\times W_{A_{n-1}}$.  Applying the affine coproduct formula (Theorem \ref{T:H affine coproduct}) and pulling back to $H^*_T(G'/B')$, we have in $H^*_T(G'/B')$ (with $G' = \Sp(2n)$ or $G' = \SO(2n)$) the equality, for $w \in W'$,
\begin{align}\label{E:classical coproduct}
	\xi'^w = \sum_{\substack{(w_1,w_2)\in W'\times W_{A_{n-1}} \\ w_1w_2\doteq w }} F^{w_1} \cup \tau^* \xi^{w_2}.
\end{align}
Here, $F^{w_1}$ is the pullback to $H^*_T(G'/B')$ (under the natural projection from the flag variety to a Grassmannian)
of an element of the torus-equivariant cohomology $H^*_T(\LG(n,2n))$ of the Lagrangian Grassmannian in the $C_n^{(1)}$ case, or an element of the torus-equivariant cohomology $H^*_T(\OG(n,2n))$ of the orthogonal Grassmannian in the $D_n^{(1)}$ case.  Also, $\xi^{w_2}$ denotes a Schubert class in $H^*_T(G/B)$ and $\tau^*$ is the composition of pullback maps $H^*_T(G/B) \to H^*_T(\flags_G) \to H^*_T(G'/B')$ (the first one being $\ev_1^*$).


Let us compare \eqref{E:classical coproduct} to the results of \cite{BH,IMN} following an argument similar to the one in Section~\ref{ssec:backstable}.  For concreteness, let us consider the 
Schubert polynomial $\schub_w^C \in \Z[z_1,z_2,\ldots] \otimes \Gamma$ of type $C$ \cite[Theorem 2.5]{BH} (type $D$ is similar), where $\Gamma \subset \Lambda$ is the subring spanned by $Q$-Schur functions (over $\Q$, the ring $\Gamma$ is generated by the odd power sum symmetric functions).  There is a ring homomorphism 
$$
\phi_n:   \Z[z_1,z_2,\ldots] \otimes \Gamma \to H^*(G'/B')
$$
taking $\schub_w^C$ to the Schubert class $\xi'^w$.  Now let $F_v^C \in \Gamma$ denote the type $C$ Stanley symmetric functions, which were studied in the classical setting in \cite{BH} \cite{La} \cite{La2}
and in the affine setting in \cite{LSS2} (see also \cite{Pon}).  By \cite{LSS2}, under $\phi_n$ we have that $F_v^C$ is sent to (the non-equivariant class) $F^v \in H^*(G'/B')$ and the usual Schubert polynomial $\schub_u(z)$ in $z$-variables is sent to $\tau^*(\xi^u)$.  According to \cite{BH}, the ring $\Z[z_1,z_2,\ldots] \otimes \Gamma$ injects into the projective limit $\varprojlim_n H^*_T(G'/B')$.  It follows from \eqref{E:classical coproduct} that we must have the expansion $\schub_w^C = \sum_{\substack{(w_1,w_2)\in W'\times W_{A_{n-1}}  \\ w_1w_2\doteq w }} F_{w_1}^C \schub_{w_2}(z)$, which is the Billey-Haiman formula for the type $C$ Schubert polynomials.

A similar formula holds in equivariant cohomology.  Equivariantly, our $F^{w_1}$ is a double analogue of the type $C$ or $D$ Stanley symmetric function.  Our \eqref{E:classical coproduct} gives a formula for the double Schubert polynomials of type $C$ or $D$ as a sum of products of double type $C$ or $D$ Stanley symmetric functions and type $A$ double Schubert polynomials.  Since the coproduct expansion of a Schubert class is unique, our formula must equal to that in \cite{IMN}.  Our definition of (equivariant) affine Stanley class gives a precise geometric description of the Grassmannian components of the formulas in \cite{IMN}.  We do not obtain a new proof of their formula, because we do not separately know that our $F^{w_1}$ can be compared to the combinatorics in \cite{IMN}.  See also \cite{AF,Tam}.  

\subsection{Classical type in $K$-theory}
Our coproduct formula in equivariant $K$-theory should be compared with the classical type double Grothendieck polynomials of A. ~N.~Kirillov and H.~Naruse \cite{KN} \cite{HIMN} just as our cohomological formula relates to the work of Billey and Haiman.
There is a Pieri formula \cite{Tak} in the $K$-homology of the type $A$ affine Grassmannian, which gives some coproduct structure constants for $K$-cohomology Schubert classes.


\begin{thebibliography}{LLMSSZ}
	

\bibitem[AF]{AF} D.~Anderson and W.~Fulton. Degeneracy loci, Pfaffians, and vexillary signed permutations in types B, C, and D, preprint, 2012, {\tt arXiv:1210.2066}.

\bibitem[BJS]{BJS} S. Billey, W. Jockusch, and R. Stanley.
Some combinatorial properties of Schubert polynomials. 
J. Algebraic Combin. 2 (1993), no. 4, 345--374. 

\bibitem[BH]{BH} S. Billey and M. Haiman.  Schubert polynomials for the classical groups. J. Amer. Math. Soc. 8 (1995), no. 2, 443--482.


\bibitem[Gi]{Gi} V. Ginzburg. Geometric methods in the representation theory of Hecke algebras and quantum groups. Notes by Vladimir Baranovsky. NATO Adv. Sci. Inst. Ser. C Math. Phys. Sci., 514, Representation theories and algebraic geometry (Montreal, PQ, 1997), 127–-183, Kluwer Acad. Publ., Dordrecht, 1998.

\bibitem[HIMN]{HIMN} T.~Hudson, T.~Ikeda, T.~Matsumura, and H.~Naruse.
Double Grothendieck polynomials for symplectic and odd orthogonal Grassmannians. J. Algebra
546 (2020), 294--314.


\bibitem[IMN]{IMN} 
T. Ikeda, L. C. Mihalcea, and H. Naruse.
Double Schubert polynomials for the classical groups.
Adv. in Math. 226 (2011) no. 1, 840--886.


\bibitem[Kac]{Kac} V. Kac. Infinite dimensional Lie algebras. Third edition. Cambridge University
Press, Cambridge, 1990.

\bibitem[KK]{KK} B. Kostant and S. Kumar. T-equivariant K-theory of generalized flag varieties. J. Differential Geom. 32 (1990)  no. 2, 549--603.

\bibitem[KN]{KN} A. N. Kirillov and H. Naruse. Construction of double Grothendieck polynomials ofclassical types using idCoxeter algebras. Tokyo J. Math. 39, 3 (2017), 695--728.

	
\bibitem[KS]{KS} M. Kashiwara and M. Shimozono.  Equivariant K-theory of affine flag manifolds and affine Grothendieck polynomials.  Duke Math. J. Volume 148, Number 3 (2009), 501--538.

\bibitem[Ku]{Ku} S. Kumar, Positivity in T-equivariant K-theory of flag varieties associated to Kac-Moody groups. With an appendix by Masaki Kashiwara. J. Eur. Math. Soc. (JEMS) 19 (2017), no. 8, 2469--2519.

\bibitem[La95]{La} T. K. Lam.
B and D analogues of stable Schubert polynomials and related insertion algorithms. Ph. D. Thesis, Massachusetts Institute of Technology. 1995.
 
\bibitem[La96]{La2} T. K. Lam.
$B_n$ Stanley symmetric functions. Proceedings of the 6th Conference on Formal Power Series and Algebraic Combinatorics (New Brunswick, NJ, 1994). Discrete Math. 157 (1996), no. 1--3, 241--270. 

\bibitem[Lam06]{Lam:affstan} T. Lam. Affine Stanley symmetric functions. Amer. J. Math. 128 (2006) no. 6, 1553--1586.

\bibitem[Lam08]{Lam:kschur} T.~Lam. Schubert polynomials for the affine Grassmannian. J. Amer. Math. Soc. 21 (2008), no. 1, 259--281.

\bibitem[LLS]{LLS} T.~Lam, S.-J.~Lee, and M.~Shimozono. Back stable Schubert calculus, preprint, 2018; {\tt arXiv:arXiv:1806.11233}.




\bibitem[LLMS]{LLMS} T. Lam, C. Li, L. C. Mihalcea, M. Shimozono.
A conjectural Peterson isomorphism in K-theory. J. Algebra 513 (2018), 326--343. 

\bibitem[LLMSSZ]{book} T.~Lam, L.~Lapointe, J.~Morse, A.~Schilling, M.~Shimozono, and M.~Zabrocki. k-Schur functions and affine Schubert calculus. Fields Institute Monographs, 33. Springer, New York; Fields Institute for Research in Mathematical Sciences, Toronto, ON, 2014. viii+219 pp.

\bibitem[LSSa]{LSS} T. Lam, A. Schilling and M. Shimozono. $K$-theory Schubert calculus of the affine Grassmannian. Comp. Math. 146 (2010), no. 4, 811--852.

\bibitem[LSSb]{LSS2} T.~Lam, A. Schilling and M. Shimozono.  Schubert polynomials for the affine Grassmannian of the symplectic group. 
Math. Z. 264 (2010), no. 4, 765--811.

\bibitem[LS]{LS:quantumaffine} T.~Lam and M.~Shimozono. Quantum cohomology of $G/P$ and homology of affine Grassmannian. Acta Math. 204 (2010), no. 1, 49--90.

\bibitem[LS2]{LS:double} T. Lam and M. Shimozono. 
$k$-double Schur functions and equivariant (co)homology of the affine Grassmannian.


\bibitem[LaSc]{LS:doubleSchub} A. Lascoux and M.-P.~Sch\"utzenberger.   Polyn\^omes de Schubert. C. R. Acad. Sci. Paris S\'er. I Math. 294 (1982), 447--450.

\bibitem[Lee]{Lee} S. J.~Lee.  
Combinatorial description of the cohomology of the affine flag variety. Trans. Amer. Math. Soc. 371 (2019), 4029--4057.

\bibitem[Mit]{M} S. A. Mitchell. The Bott filtration of a loop group. Algebraic topology, Barcelona, 1986, 215–-226, Lecture Notes in Math., 1298, Springer, Berlin, 1987. 

\bibitem[Pet]{Pet} D. Peterson. Quantum cohomology of $G/P$, Lecture notes, M.I.T., Spring 1997.

\bibitem[Pon]{Pon}
S. Pon. Affine Stanley symmetric functions for classical types. 
J. Algebraic Combin. 36 (2012), no. 4, 595--622.

\bibitem[Tak]{Tak} M.~Takigiku.
A Pieri-type formula and a factorization formula for sums of K-k-Schur functions, arXiv:1802.06335.

\bibitem[Tam]{Tam} H.~Tamvakis. Schubert polynomials and degeneracy locus formulas in ``Schubert Varieties, Equivariant Cohomology and Characteristic Classes", 261--314,
European Mathematical Society Series of Congress Reports, Z\"urich, 2018.

\end{thebibliography}
\end{document}